\newtheorem{theorem}{Theorem}[section]
\newtheorem{definition}[theorem]{Definition}
\newtheorem{lemma}[theorem]{Lemma}
\newtheorem{proposition}[theorem]{Proposition}
\newtheorem{observation}[theorem]{Observation}
\newtheorem{corollary}[theorem]{Corollary}
\newtheorem{remark}[theorem]{Remark}
\newtheorem{problem}[theorem]{Open Problem}
\newcommand{\gsmb}{\gamma_{\rm SMB}}
\newcommand{\gmb}{\gamma_{\rm MB}}
\newcommand{\po}{{\cal P}_{\rm o}}
\newcommand{\pco}{{\cal P}_{\rm co}}
\newcommand{\cH}{{\cal H}}
\newcommand{\cF}{{\cal F}}
\newcommand{\cD}{{\cal D}}
\newcommand{\cS}{{\cal S}}
\newcommand{\cP}{{\cal P}}
\newcommand{\cA}{{\cal A}}
\newcommand{\cB}{{\cal B}}
\newcommand{\cT}{{\cal T}}
\newcommand{\w}{{\rm w}}
\author[Csilla Bujt\'as et al.]{Csilla Bujt\'as\affiliationmark{1,2,}\thanks{Supported by the Slovenian Research Agency (ARIS) under the grant P1-0297.}
  \and Pakanun Dokyeesun\affiliationmark{1,}\thanks{Supported by PhD scholarship granted by The Institute for the Promotion of Teaching Science and Technology (IPST), Thailand.}
  \and Sandi Klav\v zar\affiliationmark{1,2,3,}\thanks{Supported by the Slovenian Research Agency (ARIS) under the grants P1-0297, J1-2452, and N1-0285.}}
\title[Maker-Breaker domination game on trees when Staller wins]{Maker-Breaker domination game on trees when Staller wins}
\affiliation{
  Faculty of Mathematics and Physics, University of Ljubljana, Slovenia\\
  Institute of Mathematics, Physics and Mechanics, Ljubljana, Slovenia\\
  Faculty of Natural Sciences and Mathematics, University of Maribor, Slovenia}
\keywords{ domination game; Maker-Breaker game; Maker-Breaker domination game; hypergraph; tree; subdivided star; caterpillar}
\begin{document}
\publicationdata{vol. 25:2 }{2023}{12}{10.46298/dmtcs.10515}{2022-12-20; 2022-12-20; 2023-04-27}{2023-07-13}
\maketitle
\begin{abstract}
  In the Maker-Breaker domination game played on a graph $G$, Dominator's goal is to select a dominating set and Staller's goal is to claim a closed neighborhood of some vertex. We study the cases when Staller can win the game. If Dominator (resp., Staller) starts the game, then $\gamma_{\rm SMB}(G)$ (resp., $\gamma_{\rm SMB}'(G)$) denotes the minimum number of moves Staller needs to win. For every positive integer $k$, trees $T$ with $\gamma_{\rm SMB}'(T)=k$ are characterized and a general upper bound on $\gamma_{\rm SMB}'$ is proved. Let $S = S(n_1,\dots, n_\ell)$ be the subdivided star obtained from the star with $\ell$ edges by subdividing its edges $n_1-1, \ldots, n_\ell-1$ times, respectively. Then $\gamma_{\rm SMB}'(S)$ is determined in all the cases except when $\ell\ge 4$ and each $n_i$ is even. The simplest formula is obtained when there  are at least two odd $n_i$s. If $n_1$ and $n_2$ are the two smallest such numbers, then $\gamma_{\rm SMB}'(S(n_1,\dots, n_\ell))=\lceil \log_2(n_1+n_2+1)\rceil$. For caterpillars, exact formulas for $\gamma_{\rm SMB}$ and for $\gamma_{\rm SMB}'$ are established.
\end{abstract}

\section{Introduction}
\label{sec:in}
The Maker-Breaker game was introduced in~\cite{erdos_combinatorial_1973}. The game is played on an arbitrary hypergraph by two players who alternately select a non-played vertex of the hypergraph. One player, named Maker, wants to occupy all the vertices of some hyperedge, while the other player, named Breaker, tries to prevent him from doing it. If the first situation happens, then Maker is declared as the winner of the game, otherwise Breaker wins. The game, either in its general form, or in different special cases, was investigated a lot by now, see the book~\cite{hefetz_positional_2014}. For related recent developments see~\cite{clemens_maker-breaker_2021, day_makerbreaker_2021, nicholas_day_maker-breaker_2021, glazik_new_2022, kang_makerbreaker_2021, stojakovic_hamiltonian_2021} and references therein. The Maker-Breaker games have been recently studied also on digraphs in~\cite{frieze_maker_2021}.

The Maker-Breaker domination game, {\em MBD game} for short, was introduced by Duch\^{e}ne, Gledel, Parreau, and Renault in~\cite{duchene_makerbreaker_2020}. Among other results they proved that deciding the winner of the MBD game is PSPACE-complete in general, and showed that the problem can be solved efficiently on trees. Following the tradition of the theory of the domination game, the two players in this version of the Maker-Breaker game are called Dominator and Staller. This game was introduced in~\cite{bresar_domination_2010}, its state of the art till 2021 summarized in~\cite{bresar_domination_2021}, and is still being investigated, see e.g.~\cite{bujtas_12-conjectures_2022}. We also add that the Maker-Breaker total domination game was introduced in~\cite{gledel_makerbreaker_2020} and further investigated in~\cite{forcan_maker-breaker_2022}. 

The MBD game played on a graph $G=(V(G), E(G))$ can be considered as the Maker-Breaker game played on the hypergraph $\cD_G$ whose hyperedges are the minimal dominating sets of $G$. In this case, Dominator is Maker, and Staller is Breaker. Moreover, this game can also be considered as the Maker-Breaker game played on the closed neighborhood hypergraph $\cH_G$ of $G$, where the hyperedges are the closed neighborhoods of the vertices of $G$. Now Dominator is Breaker and Staller is Maker. 

A MBD game is called {\em D-game} (resp., {\em S-game}) if Dominator (resp., Staller) is the first to play a vertex. Suppose that Dominator has a winning strategy in the D-game. Then the {\em Maker-Breaker domination number}, $\gmb(G)$, of $G$ is the (minimum) number of moves of Dominator to win the game when both players play optimally. The corresponding invariant for the S-game is denoted by $\gmb'(G)$. These concepts were introduced in~\cite{gledel_makerbreaker_2019} and further studied on prisms in~\cite{forcan_maker--breaker_2023}. Clearly, the problem of determining the Maker-Breaker domination number is interesting when Dominator wins the game. For the situations in which Staller is the winner, the \emph{Staller-Maker-Breaker domination number} (\emph{SMBD-number} for short), $\gsmb(G)$, of $G$, is the (minimum) number of Staller's moves she needs to win the D-game if both players play optimally. If Staller has no winning strategy in the D-game, we set $\gsmb(G)=\infty$. For the S-game, the corresponding invariant is defined analogously and denoted by~$\gsmb'(G)$. These two invariants are from~\cite{bujtas_fast_2022}. 

In this paper we proceed the investigation of the SMBD-numbers. In~\cite{gledel_makerbreaker_2019}, exact formulas for $\gmb(T)$ and $\gmb'(T)$ were proved if $T$ is a tree. To determine $\gsmb(T)$ and $\gsmb'(T)$ turns out to be much more involved. In the main result of Section~\ref{sec:trees} we characterize, for every positive integer $k$, the class of trees $T$ with $\gsmb'(T)=k$. In the subsequent section we study subdivided stars. Since $\gsmb(T) = \infty$ holds for each subdivided star $T$, we focus on $\gsmb'(T)$ and determine the exact value in all the cases except when the star has at least four edges, and each edge is subdivided an odd number of times. For the latter case we prove a sharp upper bound on $\gsmb'$. In Section~\ref{sec:caterpillars} we determine $\gsmb(T)$ and $\gsmb'(T)$ for an arbitrary caterpillar $T$.    

\section{Preliminaries}

\subsection{Definitions}

In a simple graph $G=(V(G), E(G))$, the \emph{open neighborhood} $N_G(v)$ of a vertex $v \in V(G)$ is the set of vertices being adjacent to $v$, while $N_G[v]=N_G(v) \cup \{v\}$ is the \emph{closed neighborhood} of $v$. If $X\subseteq V(G)$, then $N_G[X] = \bigcup_{x\in X} N_G[x]$. The {\em degree} of $v$ is $\deg_G(v)= |N_G(v)|$. A \textit{leaf} is a vertex $v$ with  $\deg_G(v)=1$. A vertex $u \in V(G)$ is a \textit{support vertex}, if $N_G(u)$ contains a leaf. Moreover, $u$ is a \textit{strong support vertex} if it is adjacent to more than one leaf. Otherwise, it is a \textit{weak support vertex}.

A  set $S \subseteq V(G)$ is a \emph{dominating set} in $G$ if $N_G[S] =V(G)$. 
A dominating set $S$ is \emph{minimal} if there is no dominating set among the proper subsets of $S$. 

The {\em residual graph $R(G)$} of a graph $G$ is obtained from $G$ by iteratively removing a pendant path $P_2$ until no such path is present. By a pendant $P_2$ we mean  a $P_2$-component or a $P_2$ attached to the graph by an edge. In the latter case, when a pendant $P_2$ is removed, exactly two vertices and two edges are deleted, whilst in the former case we obtain the empty graph.
If $T$ is a tree, then $R(T)$ is either the empty graph, or 
$R(T) = K_1$, or each support vertex of $R(T)$ is of degree at least $3$. It was proved in~\cite{duchene_makerbreaker_2020} that the winner is the same in $T$ and in $R(T)$. 

A \emph{hypergraph} $\cH=(V(\cH), E(\cH))$ is a set system over the vertex set $V(\cH)$. The (hyper)edge set $E(\cH)$ might contain subsets of $V(\cH)$ of any cardinality that is, $E(\cH) \subseteq 2^{V(\cH)}$. If $|e|=2$ holds for every hyperedge $e \in E(\cH)$, the hypergraph $\cH$ corresponds to a graph. Thus, most of the basic definitions related to graphs can be generalized to hypergraphs (see \cite{berge_hypergraphs_1989}). For example, if $\cH_1$ and $\cH_2$ are two hypergraphs, we say that  $\cH_1$ is a \emph{subhypergraph} of $\cH_2$, if $V(\cH_1) \subseteq V(\cH_2)$ and $E(\cH_1) \subseteq E(\cH_2)$.  

While studying Maker-Breaker games, we will refer to two operators on hypergraphs. Given a subset $X$ of the vertex set $V(\cH)$, the hypergraph $\cH-X$ is obtained from $\cH$ by removing the vertices in $X$ and all incident edges. That is, 
$V(\cH-X)=V(\cH)\setminus X$ and
$$ E(\cH-X)=\{e: e \in E(\cH) \enskip  {\rm and }\enskip e \subseteq V(\cH)\setminus X\}.$$
The second operator, named \emph{shrinking}, creates a hypergraph  denoted by $\cH\mid X$. Here, the vertices in $X$ are deleted again from $\cH$ but the incident hyperedges are just `shrinked' instead of being deleted. Formally, $V(\cH\mid X)=V(\cH)\setminus X$, and 
$$E(\cH\mid X)=\{e \setminus X: e \in E(\cH)\}.$$
If $X=\{v\}$, we may write $\cH-v$ and $\cH \mid v$ instead of $\cH- \{v\}$ and $\cH\mid \{v\}$, respectively.

When a Maker-Breaker game is played on a hypergraph $\cH$, we say that the hyperedges of $\cH$ are the \emph{winning sets} and Maker wins the game with his $i^{\rm th}$ move if he claims a winning set with this move. A \emph{winning strategy} of Maker is a strategy which ensures that he wins on $\cH$ no matter what strategy is applied by Breaker. Assuming that Maker has a winning strategy, we further assume that his goal is to win the game as soon as possible and that Breaker's goal is the opposite. We say that the players play \emph{optimally}, if they play complying with their goals. The \emph{winning number of Maker}, denoted by $\w_M^M(\cH)$ (resp., $\w_M^B(\cH)$), is the minimum number of his moves he needs to win the game if both players play optimally and Maker (resp., Breaker) starts the game~\cite{bujtas_fast_2022}. If Maker has no winning strategy as a first (resp., second) player, we set $\w_M^M(\cH)=\infty$ (resp., $\w_M^B(\cH)=\infty$). These general winning numbers were introduced in~\cite{bujtas_fast_2022}, but the problem of how fast Maker can win was studied earlier in several papers including~\cite{beck_positional_1981, clemens_fast_2012, clemens_how_2018, hefetz_fast_2009}. Note that $\gsmb(G)= \w_M^B(\cH_G)$ and $\gsmb'(G) = \w_M^M(\cH_G)$.

\subsection{Known results}

In this subsection we collect known results that will be used later. 

\begin{proposition}[\cite{bujtas_fast_2022}] \label{prop:delete-shrink}
	Suppose that a Maker-Breaker game is played on a hypergraph $\cH$ and $v \in V(\cH)$.
	\begin{itemize}
		\item[$(i)$] If Breaker is the first player and she plays $v$, the continuation of the game corresponds to a Maker-start game on $\cH-v$. In particular, if $v$ is an optimal first move of Breaker, then $\w_M^B(\cH)= \w_M^M(\cH -v)$.
		\item[$(ii)$] If Maker is the first player and he plays $v$, the continuation of the game corresponds to a Breaker-start game on $\cH \mid v$. In particular, if $v$ is an optimal first move of Maker, then $\w_M^M(\cH)= \w_M^B(\cH \mid v)+1$. Further, Maker wins the game with this move $v$ if and only if $\cH \mid v$ contains an empty hyperedge but $\cH$ does not.
	\end{itemize}
\end{proposition}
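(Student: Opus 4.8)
The plan is to prove both parts by exhibiting an exact correspondence between game positions, exploiting the fact that the deletion and shrinking operators leave the set of available vertices unchanged (both equal $V(\cH)\setminus\{v\}$) and modify only the family of winning sets.

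For part $(i)$, I would start from the observation that once Breaker has claimed $v$, no hyperedge containing $v$ can ever be completed by Maker, so every such hyperedge is permanently lost for him; the hyperedges that remain winnable are exactly those avoiding $v$, that is, the members of $E(\cH-v)$. Since the remaining board is $V(\cH)\setminus\{v\}=V(\cH-v)$ and it is now Maker's turn, every legal continuation of the game on $\cH$ after Breaker's move $v$ is a legal play of the Maker-start game on $\cH-v$, and conversely. Under this identification Maker completes a hyperedge $e\in E(\cH)$ in the continuation if and only if $v\notin e$ (as Breaker owns $v$), i.e.\ if and only if he completes the corresponding hyperedge of $\cH-v$; hence the two games have identical outcomes and identical counts of Maker's moves. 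The ``in particular'' claim then follows because, when $v$ is an optimal first move for Breaker, the number of Maker's moves under optimal play equals $\w_M^M(\cH-v)$.

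For part $(ii)$, I would argue symmetrically. After Maker claims $v$, every hyperedge $e\ni v$ still needs only the vertices of $e\setminus\{v\}$ to be completed by Maker, while hyperedges avoiding $v$ are unchanged; this is precisely the family $E(\cH\mid v)$. With the board again reduced to $V(\cH)\setminus\{v\}=V(\cH\mid v)$ and Breaker to move, the continuation is exactly the Breaker-start game on $\cH\mid v$. Writing $M$ for the set of vertices Maker claims after his first move $v$, he wins the continuation iff $\{v\}\cup M\supseteq e$ for some $e\in E(\cH)$, iff $M\supseteq e\setminus\{v\}$, iff $M$ contains a hyperedge of $\cH\mid v$; so Maker's winning sets in the two games match, and adding the one move $v$ already spent gives $\w_M^M(\cH)=\w_M^B(\cH\mid v)+1$. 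Finally, Maker wins outright with the move $v$ precisely when $M=\emptyset$ already completes a hyperedge, i.e.\ when $\emptyset\in E(\cH\mid v)$; and since $e\setminus\{v\}=\emptyset$ forces $e\subseteq\{v\}$, the extra hypothesis $\emptyset\notin E(\cH)$ forces $e=\{v\}$, so this happens exactly when $\cH\mid v$ has an empty hyperedge while $\cH$ does not.

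The routine but essential point to handle carefully is that both operators preserve \emph{all} vertices, including those surviving only inside deleted or shrunk hyperedges: such vertices become irrelevant to the winning condition, and I would note that an optimal Maker never wastes a move on them while Breaker can only hurt herself by doing so, so the winning numbers are unaffected. I expect the only genuinely delicate step to be the boundary bookkeeping in the last sentence of $(ii)$ --- isolating why the clause ``$\cH$ does not [contain an empty hyperedge]'' is needed, namely to exclude the degenerate situation in which Maker has already won before playing and the phrase ``with this move $v$'' would be meaningless.
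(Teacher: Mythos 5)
The paper does not prove this proposition itself; it is imported verbatim from the cited reference \cite{bujtas_fast_2022} as a known preliminary, so there is no in-paper proof to compare against. Your argument is correct and is the natural one: the position-for-position correspondence between the continuation of the game and the Maker-start game on $\cH-v$ (resp.\ the Breaker-start game on $\cH\mid v$) is exact, since both operators reduce the board to $V(\cH)\setminus\{v\}$ and adjust the winning sets precisely to reflect that $v$ is owned by Breaker (resp.\ Maker), and your handling of the boundary clause in $(ii)$ --- that ``$\cH$ does not contain an empty hyperedge'' rules out Maker having already won before any move is made --- is the right reading. The only superfluous step is your closing remark about players ``wasting'' moves on vertices lying only in dead hyperedges: because the correspondence is a bijection of legal positions with identical winning conditions, no optimality argument about such moves is needed.
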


\begin{proposition}[\cite{bujtas_fast_2022}]
	\label{prop:delete}
	Let $\cH_1$ and $\cH_2$ be two hypergraphs on the same vertex set. 
	\begin{itemize}
		\item[$(i)$] If $E(\cH_1) \subseteq E(\cH_2)$, then $\w_M^M(\cH_1)\ge  \w_M^M(\cH_2)$ and $\w_M^B(\cH_1)\ge  \w_M^B(\cH_2)$ holds.
		\item[$(ii)$] Suppose that for each $e \in E(\cH_1)$ there exists an edge $e'\in E(\cH_2)$ such that $e'\subseteq e$. Then, $\w_M^M(\cH_1)\ge  \w_M^M(\cH_2)$ and $\w_M^B(\cH_1)\ge  \w_M^B(\cH_2)$ holds.
	\end{itemize}
\end{proposition}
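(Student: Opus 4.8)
The plan is to prove both statements by a single \emph{strategy-transfer} argument, observing first that part $(i)$ is the special case of part $(ii)$ in which one may always take $e'=e$: if $E(\cH_1)\subseteq E(\cH_2)$, then every $e\in E(\cH_1)$ is itself an edge of $\cH_2$ contained in $e$, so it suffices to establish $(ii)$ and specialize.

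For $(ii)$, I would argue that any winning strategy of Maker on $\cH_1$ is, verbatim, a (possibly faster) winning strategy of Maker on $\cH_2$. Set $k=\w_M^M(\cH_1)$ and assume $k<\infty$, the case $k=\infty$ being vacuous since then the claimed inequality reads $\infty\ge \w_M^M(\cH_2)$. Because $\cH_1$ and $\cH_2$ share the vertex set $V(\cH_1)=V(\cH_2)$ and the games obey the same rule (each player claims one previously unclaimed vertex per turn), a strategy for the Maker-start game on $\cH_1$ is literally a function from partial plays to moves that is equally valid on $\cH_2$. Fix an optimal Maker strategy $\sigma$ on $\cH_1$ and let Maker follow $\sigma$ in the Maker-start game on $\cH_2$.

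Now I would track what $\sigma$ guarantees. Against an arbitrary play of Breaker on $\cH_2$ --- which is also a legal play of Breaker on $\cH_1$ --- the strategy $\sigma$ ensures that, within his first $k$ moves, Maker claims all vertices of some edge $e\in E(\cH_1)$. By hypothesis there is an edge $e'\in E(\cH_2)$ with $e'\subseteq e$; since every vertex of $e'$ lies in $e$, Maker has claimed all of $e'$ no later than the move on which he completes $e$, hence within $k$ of his moves. Thus Maker occupies a winning set of $\cH_2$ in at most $k$ moves regardless of Breaker's responses, which gives $\w_M^M(\cH_2)\le k=\w_M^M(\cH_1)$. Repeating the identical argument for games in which Breaker moves first yields $\w_M^B(\cH_2)\le \w_M^B(\cH_1)$, completing $(ii)$ and therefore $(i)$.

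The argument is conceptually light; the only points requiring care are bookkeeping rather than genuine obstacles. One must note that winning in $\cH_2$ may occur strictly earlier than the completion of $e$ (so the bound is on the number of moves, not on which edge is completed), that the transferred strategy remains well defined because the two games have identical move sets and legality conditions, and that the $\infty$ cases are handled separately so that the inequalities are interpreted correctly. No finer analysis of optimal play on either hypergraph is needed, since exhibiting one strategy achieving at most $k$ moves already bounds the optimal value.
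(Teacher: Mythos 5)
Your proof is correct. Note that the paper does not prove Proposition~\ref{prop:delete} at all --- it is imported from \cite{bujtas_fast_2022} as a known result --- so there is nothing to compare against within this document; your strategy-transfer argument (play Maker's optimal $\cH_1$-strategy verbatim on $\cH_2$, observe that completing $e\in E(\cH_1)$ forces completion of some $e'\subseteq e$ in $E(\cH_2)$ no later, and reduce $(i)$ to $(ii)$ via $e'=e$) is the standard and essentially only natural proof of this monotonicity statement, with the $\infty$ case and the identification of legal plays on the common vertex set handled correctly.
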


\begin{proposition}[\cite{bujtas_fast_2022}]
	\label{prop:H-comp}
	Let $\cH$ be a disconnected hypergraph that consists of components $\cH_1, \dots, \cH_\ell$ such that  $\w_M^M(\cH_1) \le \dots \le \w_M^M(\cH_\ell)$. Then, the following holds:
	$$\w_M^M(\cH) =\w_M^M(\cH_1) \qquad  {\rm and} \qquad \w_M^M(\cH_1) \le \w_M^B(\cH) \le \w_M^M(\cH_2).$$
\end{proposition}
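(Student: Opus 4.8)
The plan is to establish the three statements separately, in each case exploiting the single structural fact that a move inside one component changes none of the winning sets contained in the other components, together with the monotonicity recorded in Propositions~\ref{prop:delete-shrink} and~\ref{prop:delete}. For the equality $\w_M^M(\cH)=\w_M^M(\cH_1)$ I would prove the two bounds by strategy transfer. For $\w_M^M(\cH)\le\w_M^M(\cH_1)$, Maker (who moves first) runs his optimal Maker-start strategy on $\cH_1$ and ignores every Breaker move made outside $\cH_1$; such a move is a ``pass'' from the viewpoint of the $\cH_1$-subgame, and a Breaker pass can only help Maker, so he completes a winning set of $\cH_1$ within $\w_M^M(\cH_1)$ of his own moves. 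For the reverse inequality I would give Breaker a \emph{following} strategy: whenever Maker plays in a component $\cH_i$, Breaker answers in the same $\cH_i$ according to her optimal Maker-start delaying strategy there (playing arbitrarily once $\cH_i$ is saturated). Under this strategy the restriction of the play to each $\cH_i$ is exactly a Maker-start game, so in the component in which Maker eventually wins he must spend at least $\w_M^M(\cH_i)\ge\w_M^M(\cH_1)$ of his moves; hence $\w_M^M(\cH)\ge\w_M^M(\cH_1)$.

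For the lower bound $\w_M^M(\cH_1)\le\w_M^B(\cH)$ I would first isolate the general inequality $\w_M^B(\cH)\ge\w_M^M(\cH)$, valid for every hypergraph, and then invoke the equality just proved. The general inequality follows from Proposition~\ref{prop:delete-shrink}$(i)$: if $v$ is Breaker's optimal opening move, then $\w_M^B(\cH)=\w_M^M(\cH-v)$, while deleting $v$ merely discards the hyperedges through $v$ and therefore cannot help Maker, giving $\w_M^M(\cH-v)\ge\w_M^M(\cH)$; formally I would pad $\cH-v$ with $v$ as an isolated vertex so that the two hypergraphs share a vertex set and then apply Proposition~\ref{prop:delete}$(i)$.

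The remaining bound $\w_M^B(\cH)\le\w_M^M(\cH_2)$ is the one that uses the ordering of the components, and it rests on the observation that Breaker's \emph{single} opening move can spoil at most one component. Breaker opens in some component $\cH_j$; if $j\ne 1$, Maker then adopts his optimal Maker-start strategy on $\cH_1$, and if $j=1$, he adopts it on $\cH_2$ (here $\ell\ge 2$ since $\cH$ is disconnected, so $\cH_2$ exists, and the bound is vacuous when $\w_M^M(\cH_2)=\infty$). In both cases Breaker's opening lies outside the component Maker has chosen and hence counts as a pass there, so by the same monotonicity Maker wins within $\w_M^M(\cH_1)\le\w_M^M(\cH_2)$ moves in the first case and within $\w_M^M(\cH_2)$ moves in the second.

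The main obstacle is not any one inequality but making the recurring ``a move in another component is a harmless pass'' argument precise. One must verify that each strategy transfer is legal, namely that the phantom $\cH_i$-subgame never instructs a player to claim an already-occupied vertex, and that replacing a genuine opposing move by a pass is indeed covered by the monotonicity encoded in Proposition~\ref{prop:delete}. Once this bookkeeping is set up a single time, all three steps reuse it, and the proof reduces to the component-spoiling observation in the last paragraph.
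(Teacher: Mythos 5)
The paper does not prove this proposition at all --- it is imported verbatim from \cite{bujtas_fast_2022} --- so there is no in-text proof to compare against. Your argument is correct and complete: it is the standard component-wise strategy transfer (Maker commits to a single component and treats opposing moves elsewhere as passes that can only help him; Breaker follows Maker into whichever component he plays in, so each restricted subgame is an alternating Maker-start game; Breaker's lone extra opening move spoils at most one component, giving the $\w_M^B(\cH)\le\w_M^M(\cH_2)$ bound), and it matches what the paper itself indicates about the original proof in the remark following the proposition, namely that an optimal strategy of Maker is to play optimally on the component $\cH_1$.
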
		
We remark that Proposition~\ref{prop:H-comp} and its proof in~\cite{bujtas_fast_2022} directly imply that one of the optimal strategies of Maker is to play (optimally) on the component $\cH_1$. 

\begin{proposition}[\cite{bujtas_fast_2022}]
	\label{prop:comp} 
	If a disconnected graph $G$ consists of components $G_1, \dots, G_\ell$ and $\gsmb'(G_1) \le \dots  \le \gsmb'(G_\ell)$, then the following statements hold:
	\begin{itemize}
		\item[$(i)$] $\gsmb'(G)=\gsmb'(G_1)$;
		\item[$(ii)$] $\gsmb'(G_1) \le \gsmb(G)\le \gsmb'(G_2)$.	
	\end{itemize}	
\end{proposition}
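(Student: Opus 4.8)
The plan is to reduce everything to Proposition~\ref{prop:H-comp} through the hypergraph dictionary $\gsmb(G)= \w_M^B(\cH_G)$ and $\gsmb'(G) = \w_M^M(\cH_G)$ recorded in the preliminaries. The statement is a graph-level restatement of the hypergraph result once one checks that the closed neighborhood hypergraph of a disconnected graph splits exactly along its connected components.

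First I would establish the structural fact that $\cH_G$ decomposes as the disjoint union of $\cH_{G_1}, \dots, \cH_{G_\ell}$. Since $G$ is disconnected with components $G_1, \dots, G_\ell$, every vertex $v$ and its closed neighborhood $N_G[v]$ lie entirely inside the single component containing $v$; hence each hyperedge of $\cH_G$ is contained in one $V(G_i)$, and the vertex sets $V(G_i)$ are pairwise disjoint. Therefore the hypergraphs $\cH_{G_1}, \dots, \cH_{G_\ell}$ are precisely the (pairwise non-adjacent) components of $\cH_G$, and for each $i$ we have $\w_M^M(\cH_{G_i}) = \gsmb'(G_i)$.

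Next I would match hypotheses. By the previous paragraph the assumed ordering $\gsmb'(G_1) \le \dots \le \gsmb'(G_\ell)$ is exactly the ordering $\w_M^M(\cH_{G_1}) \le \dots \le \w_M^M(\cH_{G_\ell})$ required by Proposition~\ref{prop:H-comp}. Applying that proposition to $\cH = \cH_G$ with components $\cH_{G_1}, \dots, \cH_{G_\ell}$ yields
$$\w_M^M(\cH_G) = \w_M^M(\cH_{G_1}) \qquad \text{and} \qquad \w_M^M(\cH_{G_1}) \le \w_M^B(\cH_G) \le \w_M^M(\cH_{G_2}).$$
Re-encoding these two statements via $\gsmb'(G) = \w_M^M(\cH_G)$, $\gsmb(G) = \w_M^B(\cH_G)$, and $\w_M^M(\cH_{G_i}) = \gsmb'(G_i)$ gives $\gsmb'(G) = \gsmb'(G_1)$ together with $\gsmb'(G_1) \le \gsmb(G) \le \gsmb'(G_2)$, which are precisely parts $(i)$ and $(ii)$.

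The only point requiring any verification is the component decomposition of $\cH_G$, and this is immediate from the locality of closed neighborhoods, so I do not anticipate a genuine obstacle here; the bulk of the work has already been done at the hypergraph level in Proposition~\ref{prop:H-comp}. To match the remark following that proposition, I would also note that it follows that one optimal strategy for Staller in the S-game is simply to play optimally within a component $G_1$ minimizing $\gsmb'$, ignoring the remaining components.
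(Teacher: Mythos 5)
The paper states Proposition~\ref{prop:comp} as a known result imported from~\cite{bujtas_fast_2022} and gives no proof of its own, so there is nothing to diverge from; your reduction is exactly the intended derivation. Your argument is correct: the closed neighborhood hypergraph of a disconnected graph splits into the connected hypergraphs $\cH_{G_1},\dots,\cH_{G_\ell}$ (adjacency in $G_i$ puts two vertices in a common hyperedge, so each $\cH_{G_i}$ is a single component), and then Proposition~\ref{prop:H-comp} together with the identities $\gsmb'(G)=\w_M^M(\cH_G)$ and $\gsmb(G)=\w_M^B(\cH_G)$ yields both $(i)$ and $(ii)$ verbatim.
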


\begin{proposition}[\cite{bujtas_fast_2022}] \label{prop:cut}
	\begin{itemize}
		\item[$(i)$] Let $G'$ be a graph obtained from $G$ by removing a weak support vertex and the adjacent leaf. Then, the following inequalities hold:
		$$\gsmb'(G)-1 \le \gsmb'(G') \le \gsmb'(G) \qquad {\rm and} \qquad \gsmb(G') \le \gsmb(G).$$
		\item[$(ii)$] Let $G'$ be a graph obtained from $G$ by removing a weak support vertex of degree $2$ and the adjacent leaf. 
		Then, $\gsmb(G)-1 \le \gsmb(G')$ holds.
		\item[$(iii)$] Let $v$ be a cut vertex in a connected graph $G$. If $G_1, \dots, G_\ell$ are the components of $G-v$ indexed so that $\gsmb'(G_1)\le  \dots \le \gsmb'(G_\ell)$, then $$\gsmb'(G) \le \gsmb'(G_2)+1.$$
	\end{itemize}
\end{proposition}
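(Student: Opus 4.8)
The plan is to push everything into the closed neighbourhood hypergraph via $\gsmb'(G)=\w_M^M(\cH_G)$ and $\gsmb(G)=\w_M^B(\cH_G)$, and to exploit two structural facts. Writing $u$ for the weak support vertex, $w$ for its unique leaf neighbour and $A=N_G(u)\setminus\{w\}$ for its remaining (non-leaf) neighbours, one checks directly that $\cH_{G'}=(\cH_G-w)\mid u$; equivalently, the only closed neighbourhoods that change are $N_G[x]=N_{G'}[x]\cup\{u\}$ for $x\in A$, while $\cH_G$ carries two extra hyperedges $N_G[w]=\{u,w\}$ and $N_G[u]=\{u,w\}\cup A$. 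For the cut vertex $v$ of part $(iii)$, shrinking $v$ gives $E(\cH_G\mid v)=\big(\bigcup_{i}E(\cH_{G_i})\big)\cup\{N_G(v)\}$, since a vertex $x\in V(G_i)$ adjacent to $v$ satisfies $N_G[x]\setminus\{v\}=N_{G_i}[x]$, and $N_G[v]$ shrinks to the single cross-component edge $N_G(v)$.

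For part $(iii)$ this already yields a clean proof. Having Staller open with $v$, Proposition~\ref{prop:delete-shrink}$(ii)$ gives $\gsmb'(G)=\w_M^M(\cH_G)\le\w_M^B(\cH_G\mid v)+1$. Deleting the single extra edge $N_G(v)$ only shrinks the edge set of the disjoint union $\cH^*:=\bigsqcup_i\cH_{G_i}$, so $E(\cH^*)\subseteq E(\cH_G\mid v)$ and Proposition~\ref{prop:delete}$(i)$ gives $\w_M^B(\cH_G\mid v)\le\w_M^B(\cH^*)$. Finally $\cH^*$ is disconnected with components $\cH_{G_1},\dots,\cH_{G_\ell}$ ordered by $\w_M^M(\cH_{G_i})=\gsmb'(G_i)$, so Proposition~\ref{prop:H-comp} bounds $\w_M^B(\cH^*)\le\w_M^M(\cH_{G_2})=\gsmb'(G_2)$. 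Chaining the three inequalities yields $\gsmb'(G)\le\gsmb'(G_2)+1$.

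For the two ``removal does not speed Staller up'' bounds of $(i)$, namely $\gsmb'(G')\le\gsmb'(G)$ and $\gsmb(G')\le\gsmb(G)$, I would argue on the graph directly by letting Dominator combine an optimal $G'$-strategy with the pairing $\{u,w\}$: whenever Staller takes one of $u,w$, Dominator immediately takes the other, and all his other replies follow his optimal strategy witnessing $\gsmb'(G')$ (resp.\ $\gsmb(G')$) on $G'$. The pairing guarantees Staller never owns both $u$ and $w$, so $N_G[w]$ and $N_G[u]$ are never completed, while every other hyperedge $N_G[x]$ contains the $G'$-edge $N_{G'}[x]$; hence completing it forces Staller to first complete $N_{G'}[x]$ in the simulated $G'$-game, which Dominator's optimal strategy delays for at least $\gsmb'(G')$ (resp.\ $\gsmb(G')$) of her moves. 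Since the $u,w$ exchanges act as synchronised passes in the simulation, the move counts match and the inequalities follow.

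The remaining bounds $\gsmb'(G)\le\gsmb'(G')+1$ (part $(i)$) and $\gsmb(G)\le\gsmb(G')+1$ (part $(ii)$) I would prove by letting Staller secure $u$ and then imitate her optimal $G'$-strategy. In the $S$-game she opens with $u$; the singleton threat $N_G[w]=\{u,w\}$ now forces Dominator to answer $w$, after which both $u,w$ are off the board and Staller, to move, is effectively the first player in a fresh game on $\cH_{G'}$. Running her optimal $G'$-strategy she completes some $N_{G'}[x]$ within $\gsmb'(G')$ further moves, and since she owns $u$ this is simultaneously a completed $N_G[x]$; the total is $\gsmb'(G')+1$. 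The main obstacle is part $(ii)$: there Dominator moves first, so he may pre-empt by claiming $u$, killing the cross-edge $N_G[a]$ (here $A=\{a\}$ is a single vertex by the degree-$2$ hypothesis). I expect this to be the crux — one must show that when Dominator wastes his opening on $u$ Staller can still reach a $u$-free winning set within one extra move, and it is precisely the degree-$2$ assumption that makes the reduced game on $G'$ controllable; for every other opening move of Dominator the ``grab $u$, force $w$, then imitate'' strategy goes through verbatim and delivers the bound $\gsmb(G')+1$.
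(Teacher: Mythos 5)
First, note that the paper itself does not prove this proposition: it is imported verbatim from \cite{bujtas_fast_2022} as a known tool, so there is no in-paper proof to compare against and your attempt has to be judged on its own merits.

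On those merits, your part $(iii)$ is correct and clean: the identity $E(\cH_G\mid v)=\bigl(\bigcup_i E(\cH_{G_i})\bigr)\cup\{N_G(v)\}$, followed by Proposition~\ref{prop:delete-shrink}$(ii)$, Proposition~\ref{prop:delete}$(i)$ and Proposition~\ref{prop:H-comp}, chains exactly as you say. The two pairing arguments for $\gsmb'(G')\le\gsmb'(G)$ and $\gsmb(G')\le\gsmb(G)$ are also sound (the $\{u,w\}$ exchanges are synchronised passes, and every surviving hyperedge of $\cH_G$ contains a hyperedge of $\cH_{G'}$), as is the ``play $u$, force $w$, then $((\cH_G\mid u)-w)=\cH_{G'}$'' argument for $\gsmb'(G)\le\gsmb'(G')+1$.

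The genuine gap is in part $(ii)$, and you have flagged it yourself rather than closed it: the case where Dominator opens with $u$ is left as ``I expect this to be the crux --- one must show that\dots''. That case does not follow from anything you wrote, and it is exactly where the degree-$2$ hypothesis enters, so the proof of $(ii)$ is incomplete as it stands. To close it: after Dominator plays $u$, the surviving winning sets are $N_G[x]=N_{G'}[x]$ for $x\in V(G')\setminus\{a\}$, which is a superset of the edge set of $\cH_{G'}-a$, i.e.\ of the position reached in the D-game on $G'$ after the (possibly suboptimal) Breaker opening $a$. Hence Staller can simply imagine that Dominator opened with $a$ on $G'$ and follow her optimal reply, giving $\w_M^M(\cH_G-u)\le\w_M^M(\cH_{G'}-a)\le\w_M^B(\cH_{G'})=\gsmb(G')$; this is where $|A|=1$ is indispensable, since for $\deg_G(u)\ge 3$ the single move $u$ would erase several hyperedges $N_{G'}[x]$, $x\in A$, which no single Breaker move on $G'$ could. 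A smaller imprecision: the opening $d_1=w$ is not handled ``verbatim'' by ``grab $u$, force $w$'' since $w$ is already gone; there Staller grabs $u$ and the position is exactly a Breaker-to-move game on $\cH_{G'}$, giving $1+\w_M^B(\cH_{G'})=\gsmb(G')+1$ directly. With these two cases written out, your argument for $(ii)$ becomes complete.
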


The proof of~\cite[Theorem 22]{duchene_makerbreaker_2020} yields the following statement that can also be deduced from \cite[Theorem 4.5]{gledel_makerbreaker_2019}.

\begin{theorem} [\cite{duchene_makerbreaker_2020, gledel_makerbreaker_2019}]
	\label{thm:outcometrees} 
	For a tree $T$, 
	\begin{itemize} 
		\item[$(i)$] $\gsmb(T) <\infty$ if and only if $R(T)$ contains at least two strong support vertices;
		\item[$(ii)$] $\gsmb'(T) <\infty$ if and only if $T$ does not admit a perfect matching.
	\end{itemize} 
\end{theorem}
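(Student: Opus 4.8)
The plan is to analyze the game on the closed neighborhood hypergraph $\cH_T$, using $\gsmb(T)=\w_M^B(\cH_T)$ and $\gsmb'(T)=\w_M^M(\cH_T)$, and to keep in mind that a vertex set meets every hyperedge $N_T[v]$ if and only if it is a dominating set, so Dominator (as Breaker) wins exactly when he can occupy a transversal of $\cH_T$. I would prove part $(ii)$ first, since it admits a clean induction, and then reduce part $(i)$ to the residual graph $R(T)$.

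For part $(ii)$ the easy implication is that a perfect matching $M$ forces $\gsmb'(T)=\infty$: Dominator, playing second, answers each Staller move on a vertex $x$ by the matched vertex $M(x)$. This pairing guarantees that for every $v$ Dominator owns $v$ or $M(v)$, both lying in $N_T[v]$, so Staller never completes a closed neighborhood. For the converse I argue by induction on $|V(T)|$ that the absence of a perfect matching yields $\gsmb'(T)<\infty$. If $T$ has a strong support vertex $u$ with leaves $\ell_1,\ell_2$, Staller wins in two moves: she plays $u$, creating the double threat $N[\ell_1]=\{u,\ell_1\}$ and $N[\ell_2]=\{u,\ell_2\}$, and completes whichever Dominator fails to block. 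The base case $K_1$ is immediate, and a tree with at least two vertices always has a support vertex; so otherwise every support vertex is weak. I then pick a weak support $u$ with leaf $\ell$ and pass to the forest $T'=T-\{u,\ell\}$. Two facts combine here: elementary matching theory gives that $T$ has a perfect matching if and only if $T'$ does (a leaf must be matched to its unique neighbor, so $T'\neq\emptyset$ in the no-matching case), while Proposition~\ref{prop:cut}$(i)$ together with Proposition~\ref{prop:comp}$(i)$ gives $\gsmb'(T)<\infty$ if and only if $\gsmb'(T')<\infty$. Since $T'$ is smaller and still has no perfect matching, the induction closes.

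For part $(i)$ I first reduce to $R(T)$, the outcome being preserved under removal of pendant $P_2$'s (equivalently, finiteness of $\gsmb$ is preserved by iterating Proposition~\ref{prop:cut}$(i)$ and $(ii)$ for degree-two weak supports). On $R(T)$ every support vertex has degree at least $3$, and considering a deepest support vertex, whose hanging subtree contains no other support, shows its children are all leaves; hence a nonempty $R(T)\neq K_1$ always has at least one strong support vertex. The implication ``$\ge 2$ strong supports $\Rightarrow \gsmb(T)<\infty$'' is then a robust double threat: Dominator moves first but touches only one vertex, so at least one strong support $u$ together with two of its leaves stays entirely free, Staller plays $u$, and wins on her next move exactly as above, giving $\gsmb(T)\le 2$.

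The remaining and hardest implication is ``$\le 1$ strong support $\Rightarrow \gsmb(T)=\infty$'', i.e.\ a winning strategy for Dominator as first player. If $R(T)\in\{\emptyset,K_1\}$ this is immediate. Otherwise there is a unique strong support $u^*$, and it suffices to exhibit a single Dominator opening after which Staller cannot win; I would let Dominator open with $u^*$. This kills every winning set meeting $N[u^*]$, and by Proposition~\ref{prop:delete-shrink}$(i)$ the continuation is the Maker-start game on $\cH_{R(T)}-u^*$. The crux is to show that this residual hypergraph admits a \emph{Breaker pairing} — a matching of its relevant vertices so that every surviving closed neighborhood contains a matched pair — for then Dominator, now the second player, wins by answering along the pairing and $\w_M^M(\cH_{R(T)}-u^*)=\infty$. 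Producing this pairing is precisely where the hypothesis that $u^*$ is the \emph{only} strong support is used, and verifying that deleting the unique strong support leaves a covering-matchable structure is the main obstacle. Once this is in place, the degenerate $\{\emptyset,K_1\}$ cases, the treatment of disconnected pieces via Proposition~\ref{prop:H-comp}, and the transfer of finiteness back from $R(T)$ to $T$ are routine.
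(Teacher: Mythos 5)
The paper does not prove this theorem; it imports it from \cite{duchene_makerbreaker_2020, gledel_makerbreaker_2019}, so your attempt can only be judged on its own terms. Your proof of part $(ii)$ is sound: the pairing strategy for the matching direction, the two-move double threat at a strong support vertex, and the induction via a weak support pair using Proposition~\ref{prop:cut}$(i)$ (together with Proposition~\ref{prop:comp}$(i)$ to handle the forest $T-\{u,\ell\}$) all check out, as does the elementary fact that pendant-pair deletion preserves the existence of a perfect matching. The reduction of part $(i)$ to $R(T)$ and the double-threat argument for ``at least two strong supports'' are also fine.

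The genuine gap is in the implication you yourself flag as ``the main obstacle'': when $R(T)\notin\{\emptyset,K_1\}$ has a unique strong support vertex $u^*$, you assert that $\cH_{R(T)}-u^*$ admits a Breaker pairing but give no construction and no argument for its existence. As written, the hardest direction of $(i)$ is therefore not proved. The step can be closed, but the right observation is structural rather than a matching argument: in $R(T)$ every support vertex has degree at least $3$, so if you root $R(T)$ at $u^*$ and take, inside any subtree hanging from a non-leaf child $v$ of $u^*$, a support vertex of maximum depth, all of its children are leaves and it has at least two of them (degree $\ge 3$ plus a parent), making it a \emph{second} strong support vertex. Hence uniqueness of $u^*$ forces every child of $u^*$ to be a leaf, i.e.\ $R(T)$ is a star $K_{1,m}$ centered at $u^*$, and Dominator's opening move $u^*$ wins outright with the empty pairing (Lemma~\ref{lem:pairing} with $X=\{u^*\}$, $M=\emptyset$). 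Without this (or some equivalent) argument, your proof of part $(i)$ is incomplete; with it, the ``covering-matchable structure'' you were hoping to verify turns out to be trivial.
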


\noindent
Note that, by Theorem~\ref{thm:outcometrees}(ii), $\gsmb'(P_n)=\infty$ if $n$ is even.

Applying the ``pairing strategy'' from~\cite{hefetz_positional_2014}, Dominator has a winning strategy in both the D-game and S-game if the graph has a perfect matching. This fact can be extended as follows. 

\begin{lemma}[\cite{bujtas_fast_2022}]
	\label{lem:pairing}
	Consider an MBD game on $G$ and let $X$ and $Y$ be the sets of vertices played by Dominator and Staller, respectively, until a moment during the game. If there exists a matching $M$ in $G-(X\cup Y)$ such that $V(G) \setminus V(M) \subseteq N_G[X]$,
	then Dominator has a strategy to win the continuation of the game, no matter who plays the next vertex.
\end{lemma}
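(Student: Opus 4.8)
The plan is to exhibit an explicit pairing strategy for Dominator, in the spirit of the classical pairing strategy for Breaker on the closed neighborhood hypergraph $\cH_G$, and to verify that it keeps every closed neighborhood partly in Dominator's hands throughout the continuation. Recall that Dominator wins precisely when Staller never manages to occupy an entire closed neighborhood $N_G[v]$; equivalently, it suffices to guarantee that at every moment and for every $v\in V(G)$ the set $N_G[v]$ contains a vertex not claimed by Staller.

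First I would describe the strategy. Whenever Dominator is to move, he proceeds as follows. If Staller's previous move claimed an endpoint $a$ of a matching edge $\{a,b\}\in M$ whose other endpoint $b$ is still unclaimed, Dominator claims $b$; in every other case (including the case that Dominator is the first to move in the continuation, or that Staller played a vertex outside $V(M)$, or a vertex whose matching partner is already claimed) Dominator plays an arbitrary unclaimed vertex. Since $M$ is a matching in $G-(X\cup Y)$, all endpoints of its edges are unclaimed at the start of the continuation and each vertex lies in at most one edge of $M$, so the prescribed response is always to a single, well-defined, currently free vertex.

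The key invariant is that Staller never claims both endpoints of an edge of $M$. Indeed, suppose Staller claims an endpoint $a$ of $\{a,b\}\in M$. At that instant $b$ is either already claimed by Dominator, or still free; in the latter case Dominator answers immediately by claiming $b$ on his very next move, after which Staller can no longer take it. As each Dominator move answers the single immediately preceding Staller move, no response is ever left pending, and interspersed ``free'' moves by Dominator only ever help him, so the invariant is maintained throughout. This verification---that the response is always available regardless of who moves next and of Dominator's free moves---is the step I expect to require the most care.

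It remains to deduce that no closed neighborhood is ever completely occupied by Staller. Fix $v\in V(G)$. If $v\notin V(M)$, then the hypothesis $V(G)\setminus V(M)\subseteq N_G[X]$ yields a vertex $u\in X\cap N_G[v]$ claimed by Dominator already before the continuation, so $N_G[v]$ always contains Dominator's vertex $u$. If $v\in V(M)$, let $\{v,w\}\in M$; since $v,w\in N_G[v]$ (the endpoints of a matching edge are adjacent in $G$) and, by the invariant, Staller never owns both of them, at least one of $v,w$ is, at every moment, not Staller's. In either case $N_G[v]$ is never entirely claimed by Staller, so Staller never wins. Hence Dominator wins the continuation no matter who plays the next vertex, as claimed.
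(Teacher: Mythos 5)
Your argument is correct and is exactly the standard pairing-strategy proof that this lemma rests on: the paper itself does not reprove the statement (it is quoted from the cited source, as an extension of the classical pairing strategy for Breaker), and the intended proof is the one you give --- Dominator answers each Staller move on a matched vertex by taking its partner, so Staller never owns both ends of an edge of $M$, while every $v\notin V(M)$ already has a Dominator vertex in $N_G[v]$ by the hypothesis $V(G)\setminus V(M)\subseteq N_G[X]$. Your handling of the timing issues (who moves first, free moves, partners already claimed) is the only delicate point and you address it adequately.
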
 

\begin{theorem}[\cite{bujtas_fast_2022}] \label{thm:path}
	If $n$ is an odd positive integer, then
	\begin{align*} \label{form:path}
		\gsmb'(P_n)=\left\lfloor \log_2n \right\rfloor +1.
	\end{align*}
	Moreover, Staller has an optimal strategy in the S-game on $P_n$ such that she wins on the closed neighborhood of a vertex $v$ that is at an even distance from the ends of the path.
\end{theorem}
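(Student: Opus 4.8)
The plan is to work on the closed neighborhood hypergraph $\cH_{P_n}$, where Staller is Maker and $\gsmb'(P_n)=\w_M^M(\cH_{P_n})$, and to view Staller's task as occupying some closed neighborhood $N[v_i]$. Here the interior neighborhoods have size $3$ while the two end neighborhoods have size $2$, and the latter is exactly what makes a Staller win possible. I set $k=\lfloor\log_2 n\rfloor+1$, i.e.\ the least integer with $n\le 2^k-1$, and prove the upper and lower bounds separately.

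For the upper bound I would establish a \emph{doubling lemma}. Call a vertex a \emph{wall} if it is already owned by Staller or is an end of the path, and consider a maximal block of $r$ consecutive unclaimed vertices flanked by two walls, with Staller to move. I claim Staller can complete a neighborhood inside this block within $\lceil\log_2(r+1)\rceil$ of her moves; applied to the whole path (both walls being the ends, $r=n\le 2^k-1$) this gives $\w_M^M(\cH_{P_n})\le k$. The lemma is proved by induction on $r$ (with $t=\lceil\log_2(r+1)\rceil$): Staller plays a vertex $w$ inside the block so that the two resulting sub-blocks each contain at most $\lceil (r-1)/2\rceil\le 2^{\,t-1}-1$ unclaimed vertices. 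Since $w$ is now a wall for \emph{both} sub-blocks, each sub-block is a lemma instance of essentially half the size, and the two sub-games are vertex-disjoint apart from the Staller-owned $w$. After Staller plays $w$ it is Dominator's turn, and his reply meets at most one sub-block; Staller then moves into the untouched sub-block, a fresh instance with her to move, and finishes there in $t-1$ moves by induction, for a total of $t$. The base case $r=1$ is immediate: the single unclaimed vertex between two walls is completed in one move (a full $N[v_i]$ when both walls are Staller's, or an end neighborhood when one wall is the path end).

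For the lower bound I would run the dual, \emph{bisection}, strategy for Dominator and show that the value of a block as above is $\ge\lceil\log_2(r+1)\rceil$, so that $\w_M^M(\cH_{P_n})\ge\lceil\log_2(n+1)\rceil=\lfloor\log_2 n\rfloor+1$ for odd $n$; equivalently, Staller needs at least $s+1$ moves whenever $n\ge 2^s$. Using Proposition~\ref{prop:delete-shrink} to pass between the shrink/delete operations, after each Staller move Dominator answers inside the longest surviving block so as to halve it. The key point, and the step I expect to be the \textbf{main obstacle}, is that a Dominator vertex kills every neighborhood through it and thus splits a block, but --- unlike a Staller vertex --- it does \emph{not} serve as a usable wall on its side. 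One must therefore set up a value function on blocks labelled by the types of their two bounding walls (Staller-walls and path ends versus the ``dead'' walls created by Dominator) and verify that Dominator's bisecting reply never lets the value drop by more than a single halving step; correctly handling this wall asymmetry is where the real work lies.

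Finally, for the ``moreover'' statement I would observe that a neighborhood completed by the strategy above is the closed neighborhood of the last lone unclaimed vertex $v_j$ lying between two walls, and that Staller can keep every wall she creates at an even-indexed position: the split positions balancing the two sub-blocks form an interval that always contains a vertex of the required parity, while the path ends sit at the odd positions $1$ and $n$. Hence $v_j$ has odd index, i.e.\ $v_j$ is at even distance from both ends (their distances sum to $n-1$, which is even). This parity bookkeeping is routine once the doubling lemma is in place, and together with the two bounds it yields $\gsmb'(P_n)=\lfloor\log_2 n\rfloor+1$ along with the claimed optimal strategy.
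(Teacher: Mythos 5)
Your upper-bound ``doubling lemma'' is false as stated, and the balanced split it prescribes is not merely suboptimal but game-losing. The value of a block of $r$ consecutive unclaimed vertices is not $\lceil\log_2(r+1)\rceil$ for all $r$: when $r$ is even the block is worthless to Staller (consistent with $\gsmb'(P_{2m})=\infty$). Concretely, a block $\{v_{a+1},v_{a+2}\}$ flanked by two Staller walls $v_a,v_{a+3}$ offers only the winning sets $N[v_{a+1}]$ and $N[v_{a+2}]$, and both reduce to the same unclaimed pair $\{v_{a+1},v_{a+2}\}$, which Dominator kills with one move; so its value is $\infty$, not $\lceil\log_2 3\rceil=2$. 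Since for odd $r$ your condition ``both sub-blocks have at most $\lceil(r-1)/2\rceil$ vertices'' forces the exact midpoint, your strategy on $P_5$ opens with $v_3$ and produces two such worthless blocks of size $2$; Dominator answers $v_2$, after which the only surviving winning sets $N[v_4]$ and $N[v_5]$ both need the pair $\{v_4,v_5\}$, and Dominator wins outright even though $\gsmb'(P_5)=3$. The induction must instead split a block of odd size $r\le 2^t-1$ into two sub-blocks of \emph{odd} sizes, each at most $2^{t-1}-1$ (always possible since $r-1$ is even, but in general unbalanced, e.g.\ $1+3$ for $r=5$); without this parity constraint the induction step collapses. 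The same issue touches your ``moreover'' paragraph, where the parity of the split vertices is treated as routine bookkeeping although it is exactly the missing constraint. Your lower bound, finally, is not a proof: you correctly identify that Dominator's vertices do not serve as reusable walls, but you explicitly defer the resulting case analysis, so $\gsmb'(P_n)\ge\lfloor\log_2 n\rfloor+1$ is not established.

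For comparison: the paper does not reprove this theorem from scratch (it is imported from the cited source); it derives it in the Remark following Theorem~\ref{thm:tree} from the characterization that $\gsmb'(T)=k$ iff $T$ contains a substructure from $\cS_k$ and none from $\cS_1,\dots,\cS_{k-1}$, together with Observation~\ref{obs:S_k}(v), which states $P_n\in\cS_{\lfloor\log_2 n\rfloor+1}$ for odd $n$. The recursive definition of $\cS_k$ builds in precisely the constraint you are missing: the origin vertex must be joined to vertices of $X(S^1)$ and $X(S^2)$, i.e.\ to the partite class containing the leaves, which for a path forces the split vertex to sit at an even position and both resulting sub-paths to be odd. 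If you repair your lemma by restricting every block in the recursion to odd size (value $\lceil\log_2(r+1)\rceil$ there, $\infty$ for even size) and then actually carry out the lower-bound analysis for the various wall types, your block decomposition becomes essentially the path case of the paper's substructure recursion.
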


If $n \ge 3$ and $n$ is odd, we may use the formula   $\lceil \log_2 n \rceil$ instead of $\left\lfloor \log_2n \right\rfloor +1$. In later sections we also frequently write $\lceil \log_2 n \rceil$ instead of $\left\lfloor \log_2(n-1) \right\rfloor +1$ if $n\ge 2$ is even. 

\section{Characterizing trees with $\gsmb'=k$}
\label{sec:trees}

In \cite{gledel_makerbreaker_2019}, exact formulas for $\gmb(T)$ and $\gmb'(T)$ were proved if $T$ is a tree. These values and the corresponding optimal strategies of Dominator are not difficult to determine by iteratively removing a pendant path $P_2$ and constructing the residual graph $R(T)$. On the other hand, as Theorem~\ref{thm:path} and its proof given in~\cite{bujtas_fast_2022} indicate, determining $\gsmb(T)$ and $\gsmb'(T)$ for an arbitrary tree $T$ turns out to be a much more difficult problem. 

Thus, the aim of this section is, for a given integer $k$, to characterize the trees $T$ with $\gsmb'(T)=k$. Combining this result with properties of closed neighborhood hypergraphs, we also establish a sufficient condition under which $\gsmb'(G)\le k$ holds for an arbitrary graph $G$.  

First, we state a lemma that will be used in the proof of the main theorem.
\begin{lemma}\label{lem:game-on-tree}
	Let $T$ be a tree with $2 \le \gsmb'(T) <\infty$ and suppose that $s_1$ is an optimal first move of Staller in the S-game on $T$.
	\begin{itemize}
		\item[$(i)$] If Dominator's response is playing a vertex $d_1$ which is a neighbor of $s_1$, then Staller's optimal next move $s_2$ and $d_1$ belong to different components of $T-s_1$. 
		\item[$(ii)$] After Staller's optimal move $s_1$, there exists an optimal response $d_1'$ of Dominator such that $d_1' \in N_T(s_1)$.
	\end{itemize}
\end{lemma}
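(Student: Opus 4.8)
The plan is to pass to the closed neighborhood hypergraph $\cH_T$, where Staller is Maker and $\gsmb'(T)=\w_M^M(\cH_T)$, and to exploit the structure of $\cH_T\mid s_1$. Write $N_T(s_1)=\{u_1,\dots,u_m\}$ with $u_j$ the neighbor of $s_1$ in the component $C_j$ of $T-s_1$. The first step is the observation that in $\cH_T\mid s_1$ every hyperedge stays inside a single component except the shrunken set $e_0=N_T[s_1]\setminus\{s_1\}=\{u_1,\dots,u_m\}$, and that the hyperedges confined to $C_j$ form precisely $\cH_{C_j}$. Hence when Dominator plays a neighbor $d_1=u_i$, the bridge $e_0$ is destroyed and the position splits into the disjoint pieces $\cH_{C_i}-u_i$ and the intact $\cH_{C_j}$, $j\ne i$. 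I abbreviate $a_j=\gsmb'(C_j)=\w_M^M(\cH_{C_j})$ and $b_j=\w_M^M(\cH_{C_j}-u_j)$, order the components so that $a_1\le\cdots\le a_m$, and note $b_j\ge a_j$ by Proposition~\ref{prop:delete}$(i)$.

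The crux, and the step I expect to be the main obstacle, is the sub-lemma $\gsmb'(T)\le b_j$ for every $j$; intuitively it says Staller never gains by being forced to reopen a component Dominator has already touched, because she could have opened that component on her very first move. To prove it I would take an optimal first move $w$ of Maker in the game on $\cH_{C_j}-u_j$, so that $b_j=1+\w_M^B((\cH_{C_j}-u_j)\mid w)$ by Proposition~\ref{prop:delete-shrink}$(ii)$. As $u_j$ is deleted, $w\ne u_j$, so $w$ is a vertex of $C_j$ non-adjacent to $s_1$; consequently every winning set of $\cH_{C_j}-u_j$ is some $N_{C_j}[v]=N_T[v]$ that avoids both $u_j$ and $s_1$ and is therefore still a hyperedge of $\cH_T$. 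Playing this same $w$ as Staller's opening move in $T$ then yields $E((\cH_{C_j}-u_j)\mid w)\subseteq E(\cH_T\mid w)$, so Proposition~\ref{prop:delete}$(i)$ gives $\w_M^B(\cH_T\mid w)\le b_j-1$ and hence $\gsmb'(T)\le 1+\w_M^B(\cH_T\mid w)\le b_j$.

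From the sub-lemma I would first extract the inequality $a_2\le b_1$ (which also forces $m\ge 2$, since for $m=1$ Dominator's move $u_1$ already gives $\gsmb'(T)\ge 1+b_1$, contradicting $\gsmb'(T)\le b_1$). Indeed, Dominator may always respond with $u_1$, so $\gsmb'(T)\ge 1+\w_M^M((\cH_T\mid s_1)-u_1)=1+\min(b_1,a_2)$; together with $\gsmb'(T)\le b_1$ this forces $\min(b_1,a_2)=a_2$, i.e. $a_2\le b_1$. Now for part $(i)$, after $d_1=u_i$ the value of the resulting disconnected position is $\min\big(b_i,\min_{j\ne i}a_j\big)$, and by Proposition~\ref{prop:H-comp} Staller has an optimal move in a cheapest remaining component. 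If $i\ne 1$ then $\min_{j\ne i}a_j=a_1\le a_i\le b_i$, and if $i=1$ then $\min_{j\ne 1}a_j=a_2\le b_1$; either way a cheapest component is one not equal to $C_i$, so Staller has an optimal $s_2$ in a component different from the one containing $d_1$.

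For part $(ii)$ I would show that $u_1$ is an optimal response for Dominator. Any single Dominator move meets at most one component, so at least one of the two cheapest components $C_1,C_2$ remains intact; deleting the bridge $e_0$ (if it is still present) only helps Maker by Proposition~\ref{prop:delete}$(i)$, and playing inside that intact component shows $\w_M^M((\cH_T\mid s_1)-d)\le a_2$ for every move $d$. On the other hand $u_1$ achieves value $\min(b_1,a_2)=a_2$ by the inequality $a_2\le b_1$. Hence no response beats $u_1$, so $d_1'=u_1\in N_T(s_1)$ is optimal, which is $(ii)$.
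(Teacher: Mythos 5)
Your proposal is correct, and its treatment of part $(ii)$ is genuinely different from the paper's. The decomposition of $\cH_T\mid s_1$ into the bridge $e_0$ plus the component hypergraphs $\cH_{C_j}$ is exactly what underlies the paper's argument, and your sub-lemma $\gsmb'(T)\le b_j$ is the same key inequality the paper relies on, only derived the other way round: the paper observes directly that $E(\cH_{C_j}-u_j)\subseteq E(\cH_T)$ and applies Proposition~\ref{prop:delete}$(i)$ to the Maker-start games (giving $b_j=\w_M^M(\cH_{C_j}-u_j)\ge \w_M^M(\cH_T)=\gsmb'(T)$ in one line), whereas you route through an explicit opening move $w$ and the Breaker-start games; the direct version saves a paragraph but the content is identical, so for $(i)$ the two proofs are essentially the same. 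The real divergence is in $(ii)$: the paper runs an imagination-strategy argument (two parallel games with copied moves, which in turn invokes $(i)$), while you simply compute that every Dominator reply $d$ leaves intact one of the two cheapest components and hence, by Proposition~\ref{prop:delete}$(i)$, has value at most $a_2$, whereas $u_1$ attains exactly $a_2=\min(b_1,a_2)$. This is shorter, avoids the copying bookkeeping, and identifies which neighbor is optimal rather than merely producing some optimal neighbor.

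One caveat on $(i)$: you conclude only that Staller \emph{has} an optimal $s_2$ outside $C_i$, while the statement (and the paper's proof by contradiction) asserts that \emph{every} optimal $s_2$ lies outside the component of $d_1$; the universal form is the one invoked later, e.g., in the proof of Theorem~\ref{thm:tree}. Your own inequalities already deliver it: combining the sub-lemma for the index $i$ with $\gsmb'(T)\ge 1+\min\bigl(b_i,\min_{j\ne i}a_j\bigr)$ forces the \emph{strict} bound $\min_{j\ne i}a_j<b_i$ for every $i$ (not just $a_2\le b_1$ and $a_1\le a_i\le b_i$), so no component of $\cH_{C_i}-u_i$ attains the minimum and no optimal $s_2$ can lie in $C_i$. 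You should state this strict version explicitly. Finally, like the paper, you apply Proposition~\ref{prop:delete} to hypergraphs on different vertex sets; this is harmless here but deserves a word.
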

\begin{proof}
 Let $T_1, \dots ,T_\ell$ be the components of $T-s_1$.\\
$(i)$  Suppose for a contradiction that $d_1$ and $s_2$ belong to the same component, say $T_1$. Proposition~\ref{prop:delete-shrink} implies that, after the moves $s_1$ and $d_1$, the game continues as a Maker-start game with the winning sets in $\cH'=(\cH_T \! \mid \! s_1) -d_1$. As $s_1$ is an optimal start vertex for Staller, $\gsmb'(T)\ge  1+\w_M^M(\cH')$. Further, since by our assumption, $s_1$ and $d_1$ are adjacent vertices in $T$, the hypergraph $\cH'$ contains the components   $\cH_{T_2}, \dots , \cH_{T_\ell}$ and  all components of $\cH_1=\cH_{T_1}-d_1$. By Proposition~\ref{prop:H-comp},  $\w_M^M(\cH')$ equals the minimum of the values $\w_M^M(\cH_1), \w_M^M(\cH_{T_2}), \dots,  \w_M^M(\cH_{T_\ell})$, and one the optimal strategies of Staller (Maker) is to play in the same (appropriate) component of $\cH'$ starting with her move $s_2$. As $s_2 \in V(\cH_1)$ is supposed to be an optimal move, $\w_M^M(\cH')=\w_M^M(\cH_1)$. On the other hand, $\cH_1$ is a subhypergraph of $\cH_{T}$ and Proposition~\ref{prop:delete} thus implies $\w_M^M(\cH_1)\ge \w_M^M(\cH_{T})$. This gives the following contradiction:
$$\gsmb'(T)= \w_M^M(\cH_{T}) \ge 1+\w_M^M(\cH')=1+\w_M^M(\cH_1) \ge 1+ \gsmb'(T).
$$
$(ii)$ Suppose $d_1$ is an optimal response of Dominator to the move $s_1$ and  $d_1 \notin N_T(s_1)$. We may also suppose, without loss of generality, that $d_1 \in V(T_1)$. Let $d_1'$ be the neighbor of $s_1$ from the subtree $T_1$.
We will show, by using imagination strategy, that $d'_1$ is also an optimal response of Dominator. 
Let Game 1 be an S-game on $T$ and let Staller start by playing $s_1$ and Dominator's response is $d_1'$. After that, Staller plays optimally as follows: 
\begin{itemize}
	\item[(*)] Staller selects a component of $(\cH_T|s_1) - d_1'$ with the possible smallest winning number and plays optimally all of her moves in this component.  
\end{itemize}
Note that by Proposition~\ref{prop:H-comp} the strategy (*) is indeed an optimal one for Staller from the second move. 

Let Game 2 be an S-game on $T$ in which Dominator plays optimally and his response to the move $s_1$ is $d_1$. In the continuation, Staller always selects an optimal move $s_i$ in Game 1 and copies her move to Game 2 if possible, while Dominator selects an optimal move $d_i$ in Game 2 and copies it to Game 1 if possible. 
Let $t_k$ be the number of moves Staller needs to win in Game $k$ for $k \in [2]$.

By $(i)$, the optimal second move $s_2$ of Staller in Game 1 cannot belong to $T_1$. Let us assume $s_2 \in V(T_j)$ with $j \neq 1$. Then according to (*), all the remaining moves of Staller will be in $T_j$. In particular, Staller never plays the vertex $d_1$. On the other hand, if Dominator plays $d_i=d_1'$ in Game 2, then we replace this move by $d_1$ in Game 1. Otherwise Dominator's move can always be copied into Game 1. 

At the end, Staller wins Game 1 with her $(t_1)^{\rm st}$ move by claiming a winning set from $\cH_{T_j}$.
As this winning set is also claimed in Game 2 (or Staller has already won in Game 2), Staller wins Game 2 in $t_2 \le t_1$ moves.
Since Staller plays optimally throughout Game 1, we infer  $\gsmb'(T) \ge t_1$. 
Similarly, $t_2 \ge \gsmb'(T)$ holds because Dominator plays optimally in Game 2.
Therefore, $\gsmb'(T) \ge t_1 \ge t_2 \ge \gsmb'(T)$ that implies $\gsmb'(T)=t_1$.  
We may conclude that $d'_1 \in N_T(s_1)$ is also an optimal move for Dominator after $s_1$.
\end{proof}

\begin{definition} \label{def:S_k}
	We define a \emph{family $\cS_k$} of graphs for each positive integer $k$, and for each $S\in \cS_k$ a subset of vertices $X(S)$, as follows.
	\begin{itemize}
		\item Let $\cS_1=\{P_1\}$ and let $X(P_1)$ contain the only vertex of $P_1$.
		\item For $k \ge 2$, let $\cS_k^*$ be the set of graphs that can be obtained in the following way. Take two vertex disjoint graphs $S^1 \in \cS_{k-1}$,  $S^2 \in \cup_{i=1}^{k-1}\cS_i$, and a new vertex $z_k$. Choose a vertex $x^j$ from $X(S^j)$, for each $j \in [2]$. Then, a graph $S$ from $\cS_k^*$ is obtained by making adjacent the \emph{origin vertex} $z_k$ with $x^1$ and $x^2$. We define $X(S)=X(S^1) \cup X(S^2)$.
		\item  The family  $\cS_k$ is obtained as $\cS_k^* \setminus \cup_{i=1}^{k-1}\cS_i$.
	\end{itemize}
	We also introduce the notation $\cS= \bigcup_{k=1}^\infty \cS_k $  and say that $S \in \cS$ is a \emph{substructure in the graph $G$} if $S$ is a subgraph of $G$ and $\deg_G(x)=\deg_S(x)$ holds for each $x \in X(S)$.  We further say that the vertices from $X(S)$ have \emph{fixed degrees}.
\end{definition}

In Fig.~\ref{fig:Sfamily}, families $\cS_1$, $\cS_2$, $\cS_3$, and $\cS_4$, are presented, where the black vertices are the vertices with fixed degrees. Note that the vertices in $V(S)\setminus X(S)$ may be incident with any number of additional edges in $G$. Due to the recursive definition of $\cS_k$, each of the following statements can be easily verified by proceeding with induction on $k$, cf.~Fig.~\ref{fig:Sfamily} again. 

\begin{center}
	\begin{figure}[ht!]
		\definecolor{ffffff}{rgb}{1.,1.,1.}
		\begin{tikzpicture}[scale=0.78,line cap=round,line join=round,>=triangle 45,x=0.8cm,y=0.5cm]
			\draw [line width=1.2pt] (7.,2.)-- (8.,2.);
			\draw [line width=1.2pt] (8.,2.)-- (9.,2.);
			\draw [line width=1.2pt] (-2.,0.)-- (-1.,0.);
			\draw [line width=1.2pt] (-1.,0.)-- (0.,0.);
			\draw [line width=1.2pt] (0.,0.)-- (1.,0.);
			\draw [line width=1.2pt] (1.,0.)-- (2.,0.);
			\draw [line width=1.2pt] (5.,0.)-- (6.,0.);
			\draw [line width=1.2pt] (6.,0.)-- (7.,0.);
			\draw [line width=1.2pt] (7.,0.)-- (8.,0.);
			\draw [line width=1.2pt] (8.,0.)-- (9.,0.);
			\draw [line width=1.2pt] (9.,0.)-- (10.,0.);
			\draw [line width=1.2pt] (10.,0.)-- (11.,0.);
			\draw [line width=1.2pt] (-2.,-2.)-- (-1.,-2.);
			\draw [line width=1.2pt] (-1.,-2.)-- (0.,-2.);
			\draw [line width=1.2pt] (0.,-2.)-- (1.,-2.);
			\draw [line width=1.2pt] (1.,-2.)-- (2.,-2.);
			\draw [line width=1.2pt] (3.,-3.)-- (4.,-2.);
			\draw [line width=1.2pt] (3.,-3.)-- (0.,-2.);
			\draw [line width=1.2pt] (-2.,-5.)-- (-1.,-5.);
			\draw [line width=1.2pt] (-1.,-5.)-- (0.,-5.);
			\draw [line width=1.2pt] (0.,-5.)-- (1.,-5.);
			\draw [line width=1.2pt] (1.,-5.)-- (2.,-5.);
			\draw [line width=1.2pt] (4.,-5.)-- (5.,-5.);
			\draw [line width=1.2pt] (5.,-5.)-- (6.,-5.);
			\draw [line width=1.2pt] (3.,-5.)-- (4.,-5.);
			\draw [line width=1.2pt] (2.,-5.)-- (3.,-5.);
			\draw [line width=1.2pt] (-2.,-7.)-- (-1.,-7.);
			\draw [line width=1.2pt] (-1.,-7.)-- (0.,-7.);
			\draw [line width=1.2pt] (0.,-7.)-- (1.,-7.);
			\draw [line width=1.2pt] (1.,-7.)-- (2.,-7.);
			\draw [line width=1.2pt] (4.,-7.)-- (5.,-7.);
			\draw [line width=1.2pt] (5.,-7.)-- (6.,-7.);
			\draw [line width=1.2pt] (3.,-8.)-- (0.,-7.);
			\draw [line width=1.2pt] (3.,-8.)-- (4.,-7.);
			\draw [line width=1.2pt] (-2.,-10.)-- (-1.,-10.);
			\draw [line width=1.2pt] (-1.,-10.)-- (0.,-10.);
			\draw [line width=1.2pt] (0.,-10.)-- (1.,-10.);
			\draw [line width=1.2pt] (1.,-10.)-- (2.,-10.);
			\draw [line width=1.2pt] (4.,-10.)-- (5.,-10.);
			\draw [line width=1.2pt] (5.,-10.)-- (6.,-10.);
			\draw [line width=1.2pt] (7.,-10.)-- (6.,-10.);
			\draw [line width=1.2pt] (7.,-10.)-- (8.,-10.);
			\draw [line width=1.2pt] (3.,-10.)-- (4.,-10.);
			\draw [line width=1.2pt] (3.,-10.)-- (2.,-10.);
			\draw [line width=1.2pt] (-2.,-12.)-- (-1.,-12.);
			\draw [line width=1.2pt] (-1.,-12.)-- (0.,-12.);
			\draw [line width=1.2pt] (0.,-12.)-- (1.,-12.);
			\draw [line width=1.2pt] (1.,-12.)-- (2.,-12.);
			\draw [line width=1.2pt] (4.,-12.)-- (5.,-12.);
			\draw [line width=1.2pt] (5.,-12.)-- (6.,-12.);
			\draw [line width=1.2pt] (6.,-12.)-- (7.,-12.);
			\draw [line width=1.2pt] (7.,-12.)-- (8.,-12.);
			\draw [line width=1.2pt] (0.,-12.)-- (3.,-13.);
			\draw [line width=1.2pt] (3.,-13.)-- (4.,-12.);
			\draw [line width=1.2pt] (-2.,-15.)-- (-1.,-15.);
			\draw [line width=1.2pt] (-1.,-15.)-- (0.,-15.);
			\draw [line width=1.2pt] (0.,-15.)-- (1.,-15.);
			\draw [line width=1.2pt] (1.,-15.)-- (2.,-15.);
			\draw [line width=1.2pt] (4.,-15.)-- (5.,-15.);
			\draw [line width=1.2pt] (5.,-15.)-- (6.,-15.);
			\draw [line width=1.2pt] (6.,-15.)-- (7.,-15.);
			\draw [line width=1.2pt] (7.,-15.)-- (8.,-15.);
			\draw [line width=1.2pt] (3.,-16.)-- (6.,-15.);
			\draw [line width=1.2pt] (3.,-16.)-- (0.,-15.);
			
			\draw [line width=1.2pt] (-2.,-17.)-- (-1.,-17.);
			\draw [line width=1.2pt] (-1.,-17.)-- (0.,-17.);
			\draw [line width=1.2pt] (0.,-17.)-- (1.,-17.);
			\draw [line width=1.2pt] (1.,-17.)-- (2.,-17.);
			\draw [line width=1.2pt] (2.,-17.)-- (3.,-17.);
			\draw [line width=1.2pt] (3.,-17.)-- (4.,-17.);
			\draw [line width=1.2pt] (6.,-17.)-- (7.,-17.);
			\draw [line width=1.2pt] (7.,-17.)-- (8.,-17.);
			\draw [line width=1.2pt] (5.,-18.)-- (6.,-17.);
			\draw [line width=1.2pt] (5.,-18.)-- (2.,-17.);
			
			\draw [line width=1.2pt] (-2.,-19.)-- (-1.,-19.);
			\draw [line width=1.2pt] (-1.,-19.)-- (0.,-19.);
			\draw [line width=1.2pt] (0.,-19.)-- (1.,-19.);
			\draw [line width=1.2pt] (1.,-19.)-- (2.,-19.);
			\draw [line width=1.2pt] (2.,-19.)-- (3.,-19.);
			\draw [line width=1.2pt] (3.,-19.)-- (4.,-19.);
			\draw [line width=1.2pt] (6.,-19.)-- (7.,-19.);
			\draw [line width=1.2pt] (7.,-19.)-- (8.,-19.);
			\draw [line width=1.2pt] (8.,-19.)-- (9.,-19.);
			\draw [line width=1.2pt] (9.,-19.)-- (10.,-19.);
			\draw [line width=1.2pt] (5.,-19.)-- (6.,-19.);
			\draw [line width=1.2pt] (5.,-19.)-- (4.,-19.);
			\draw [line width=1.2pt] (-2.,-21.)-- (-1.,-21.);
			\draw [line width=1.2pt] (-1.,-21.)-- (0.,-21.);
			\draw [line width=1.2pt] (0.,-21.)-- (1.,-21.);
			\draw [line width=1.2pt] (1.,-21.)-- (2.,-21.);
			\draw [line width=1.2pt] (2.,-21.)-- (3.,-21.);
			\draw [line width=1.2pt] (3.,-21.)-- (4.,-21.);
			\draw [line width=1.2pt] (6.,-21.)-- (7.,-21.);
			\draw [line width=1.2pt] (7.,-21.)-- (8.,-21.);
			\draw [line width=1.2pt] (8.,-21.)-- (9.,-21.);
			\draw [line width=1.2pt] (9.,-21.)-- (10.,-21.);
			
			\draw [line width=1.2pt] (-2.,-24.)-- (-1.,-24.);
			\draw [line width=1.2pt] (-1.,-24.)-- (0.,-24.);
			\draw [line width=1.2pt] (0.,-24.)-- (1.,-24.);
			\draw [line width=1.2pt] (1.,-24.)-- (2.,-24.);
			\draw [line width=1.2pt] (2.,-24.)-- (3.,-24.);
			\draw [line width=1.2pt] (3.,-24.)-- (4.,-24.);
			\draw [line width=1.2pt] (6.,-24.)-- (7.,-24.);
			\draw [line width=1.2pt] (7.,-24.)-- (8.,-24.);
			\draw [line width=1.2pt] (8.,-24.)-- (9.,-24.);
			\draw [line width=1.2pt] (9.,-24.)-- (10.,-24.);
			\draw [line width=1.2pt] (5.,-22.)-- (8.,-21.);
			\draw [line width=1.2pt] (6.,-24.)-- (5.,-25.);
			\draw [line width=1.2pt] (-2.,-27.)-- (-1.,-27.);
			\draw [line width=1.2pt] (-1.,-27.)-- (0.,-27.);
			\draw [line width=1.2pt] (0.,-27.)-- (1.,-27.);
			\draw [line width=1.2pt] (1.,-27.)-- (2.,-27.);
			\draw [line width=1.2pt] (2.,-27.)-- (3.,-27.);
			\draw [line width=1.2pt] (3.,-27.)-- (4.,-27.);
			\draw [line width=1.2pt] (6.,-27.)-- (7.,-27.);
			\draw [line width=1.2pt] (7.,-27.)-- (8.,-27.);
			\draw [line width=1.2pt] (8.,-27.)-- (9.,-27.);
			\draw [line width=1.2pt] (9.,-27.)-- (10.,-27.);
			\draw [line width=1.2pt] (5.,-28.)-- (8.,-27.);
			\draw [line width=1.2pt] (-2.,-30.)-- (-1.,-30.);
			\draw [line width=1.2pt] (-1.,-30.)-- (0.,-30.);
			\draw [line width=1.2pt] (0.,-30.)-- (1.,-30.);
			\draw [line width=1.2pt] (1.,-30.)-- (2.,-30.);
			\draw [line width=1.2pt] (2.,-30.)-- (3.,-30.);
			\draw [line width=1.2pt] (3.,-30.)-- (4.,-30.);
			\draw [line width=1.2pt] (4.,-30.)-- (5.,-30.);
			\draw [line width=1.2pt] (5.,-30.)-- (6.,-30.);
			\draw [line width=1.2pt] (6.,-30.)-- (7.,-30.);
			\draw [line width=1.2pt] (7.,-30.)-- (8.,-30.);
			\draw [line width=1.2pt] (8.,-30.)-- (9.,-30.);
			\draw [line width=1.2pt] (9.,-30.)-- (10.,-30.);
			\draw [line width=1.2pt] (10.,-30.)-- (11.,-30.);
			\draw [line width=1.2pt] (11.,-30.)-- (12.,-30.);
			\draw [line width=1.2pt] (-2.,-32.)-- (-1.,-32.);
			\draw [line width=1.2pt] (-1.,-32.)-- (0.,-32.);
			\draw [line width=1.2pt] (0.,-32.)-- (1.,-32.);
			\draw [line width=1.2pt] (1.,-32.)-- (2.,-32.);
			\draw [line width=1.2pt] (2.,-32.)-- (3.,-32.);
			\draw [line width=1.2pt] (3.,-32.)-- (4.,-32.);
			\draw [line width=1.2pt] (6.,-32.)-- (7.,-32.);
			\draw [line width=1.2pt] (7.,-32.)-- (8.,-32.);
			\draw [line width=1.2pt] (8.,-32.)-- (9.,-32.);
			\draw [line width=1.2pt] (9.,-32.)-- (10.,-32.);
			\draw [line width=1.2pt] (10.,-32.)-- (11.,-32.);
			\draw [line width=1.2pt] (11.,-32.)-- (12.,-32.);
			\draw [line width=1.2pt] (8.,-32.)-- (5.,-33.);
			\draw [line width=1.2pt] (-2.,-35.)-- (-1.,-35.);
			\draw [line width=1.2pt] (-1.,-35.)-- (0.,-35.);
			\draw [line width=1.2pt] (0.,-35.)-- (1.,-35.);
			\draw [line width=1.2pt] (1.,-35.)-- (2.,-35.);
			\draw [line width=1.2pt] (2.,-35.)-- (3.,-35.);
			\draw [line width=1.2pt] (3.,-35.)-- (4.,-35.);
			\draw [line width=1.2pt] (6.,-35.)-- (7.,-35.);
			\draw [line width=1.2pt] (7.,-35.)-- (8.,-35.);
			\draw [line width=1.2pt] (8.,-35.)-- (9.,-35.);
			\draw [line width=1.2pt] (9.,-35.)-- (10.,-35.);
			\draw [line width=1.2pt] (10.,-35.)-- (11.,-35.);
			\draw [line width=1.2pt] (11.,-35.)-- (12.,-35.);
			\draw [line width=1.2pt] (5.,-36.)-- (8.,-35.);
			\draw [line width=1.2pt] (5.,-22.)-- (4.,-21.);
			\draw [line width=1.2pt] (5.,-36.)-- (2.,-35.);
			\draw [line width=1.2pt] (5.,-33.)-- (4.,-32.);
			\draw [line width=1.2pt] (5.,-28.)-- (2.,-27.);
			\draw [line width=1.2pt] (5.,-25.)-- (2.,-24.);
			\draw (-4,2.8) node[anchor=north west] {$\mathcal{S}_1:$};
			\draw (4.5,2.8) node[anchor=north west] {$\mathcal{S}_2:$};
			\draw (-4,1) node[anchor=north west] {$\mathcal{S}_3:$};
			\draw (-4,-1) node[anchor=north west] {$\mathcal{S}_4:$};
			\draw (0.5,2.5) node[anchor=north west] {$P_1$};
			\draw (10.5,2.5) node[anchor=north west] {$P_3$};
			\draw (2.5,0.6) node[anchor=north west] {$P_5$};
			\draw (11.5,0.6) node[anchor=north west] {$P_7$};
			\draw (6.5,-4.3) node[anchor=north west] {$P_9$};
			\draw (8.5,-9.2) node[anchor=north west] {$P_{11}$};
			\draw (10.8,-17.2) node[anchor=north west] {$P_{13}$};
			\draw (12.4,-28.4) node[anchor=north west] {$P_{15}$};
			\begin{scriptsize}
				\draw [fill=black] (0.,2.) circle (2.5pt);
				\draw [fill=black] (7.,2.) circle (2.5pt);
				\draw [fill=ffffff] (8.,2.) circle (2.5pt);
				\draw [fill=black] (9.,2.) circle (2.5pt);
				\draw [fill=black] (-2.,0.) circle (2.5pt);
				\draw [fill=ffffff] (-1.,0.) circle (2.5pt);
				\draw [fill=black] (0.,0.) circle (2.5pt);
				\draw [fill=ffffff] (1.,0.) circle (2.5pt);
				\draw [fill=black] (2.,0.) circle (2.5pt);
				\draw [fill=black] (5.,0.) circle (2.5pt);
				\draw [fill=ffffff] (6.,0.) circle (2.5pt);
				\draw [fill=black] (7.,0.) circle (2.5pt);
				\draw [fill=ffffff] (8.,0.) circle (2.5pt);
				\draw [fill=black] (9.,0.) circle (2.5pt);
				\draw [fill=ffffff] (10.,0.) circle (2.5pt);
				\draw [fill=black] (11.,0.) circle (2.5pt);
				\draw [fill=black] (-2.,-2.) circle (2.5pt);
				\draw [fill=ffffff] (-1.,-2.) circle (2.5pt);
				\draw [fill=black] (0.,-2.) circle (2.5pt);
				\draw [fill=ffffff] (1.,-2.) circle (2.5pt);
				\draw [fill=black] (2.,-2.) circle (2.5pt);
				\draw [fill=black] (4.,-2.) circle (2.5pt);
				\draw [fill=ffffff] (3.,-3.) circle (2.5pt);
				\draw [fill=black] (-2.,-5.) circle (2.5pt);
				\draw [fill=ffffff] (-1.,-5.) circle (2.5pt);
				\draw [fill=black] (0.,-5.) circle (2.5pt);
				\draw [fill=ffffff] (1.,-5.) circle (2.5pt);
				\draw [fill=black] (2.,-5.) circle (2.5pt);
				\draw [fill=black] (4.,-5.) circle (2.5pt);
				\draw [fill=ffffff] (5.,-5.) circle (2.5pt);
				\draw [fill=black] (6.,-5.) circle (2.5pt);
				\draw [fill=ffffff] (3.,-5.) circle (2.5pt);
				\draw [fill=black] (-2.,-7.) circle (2.5pt);
				\draw [fill=ffffff] (-1.,-7.) circle (2.5pt);
				\draw [fill=black] (0.,-7.) circle (2.5pt);
				\draw [fill=ffffff] (1.,-7.) circle (2.5pt);
				\draw [fill=black] (2.,-7.) circle (2.5pt);
				\draw [fill=black] (4.,-7.) circle (2.5pt);
				\draw [fill=ffffff] (5.,-7.) circle (2.5pt);
				\draw [fill=black] (6.,-7.) circle (2.5pt);
				\draw [fill=ffffff] (3.,-8.) circle (2.5pt);
				\draw [fill=black] (-2.,-10.) circle (2.5pt);
				\draw [fill=ffffff] (-1.,-10.) circle (2.5pt);
				\draw [fill=black] (0.,-10.) circle (2.5pt);
				\draw [fill=ffffff] (1.,-10.) circle (2.5pt);
				\draw [fill=black] (2.,-10.) circle (2.5pt);
				\draw [fill=black] (4.,-10.) circle (2.5pt);
				\draw [fill=ffffff] (5.,-10.) circle (2.5pt);
				\draw [fill=black] (6.,-10.) circle (2.5pt);
				\draw [fill=ffffff] (7.,-10.) circle (2.5pt);
				\draw [fill=black] (8.,-10.) circle (2.5pt);
				\draw [fill=ffffff] (3.,-10.) circle (2.5pt);
				\draw [fill=black] (-2.,-12.) circle (2.5pt);
				\draw [fill=ffffff] (-1.,-12.) circle (2.5pt);
				\draw [fill=black] (0.,-12.) circle (2.5pt);
				\draw [fill=ffffff] (1.,-12.) circle (2.5pt);
				\draw [fill=black] (2.,-12.) circle (2.5pt);
				\draw [fill=black] (4.,-12.) circle (2.5pt);
				\draw [fill=ffffff] (5.,-12.) circle (2.5pt);
				\draw [fill=black] (6.,-12.) circle (2.5pt);
				\draw [fill=ffffff] (7.,-12.) circle (2.5pt);
				\draw [fill=black] (8.,-12.) circle (2.5pt);
				\draw [fill=ffffff] (3.,-13.) circle (2.5pt);
				\draw [fill=black] (-2.,-15.) circle (2.5pt);
				\draw [fill=ffffff] (-1.,-15.) circle (2.5pt);
				\draw [fill=black] (0.,-15.) circle (2.5pt);
				\draw [fill=ffffff] (1.,-15.) circle (2.5pt);
				\draw [fill=black] (2.,-15.) circle (2.5pt);
				\draw [fill=black] (4.,-15.) circle (2.5pt);
				\draw [fill=ffffff] (5.,-15.) circle (2.5pt);
				\draw [fill=black] (6.,-15.) circle (2.5pt);
				\draw [fill=ffffff] (7.,-15.) circle (2.5pt);
				\draw [fill=black] (8.,-15.) circle (2.5pt);
				\draw [fill=ffffff] (3.,-16.) circle (2.5pt);
				
				\draw [fill=black] (-2.,-17.) circle (2.5pt);
				\draw [fill=ffffff] (-1.,-17.) circle (2.5pt);
				\draw [fill=black] (0.,-17.) circle (2.5pt);
				\draw [fill=ffffff] (1.,-17.) circle (2.5pt);
				\draw [fill=black] (2.,-17.) circle (2.5pt);
				\draw [fill=ffffff] (3.,-17.) circle (2.5pt);
				\draw [fill=black] (4.,-17.) circle (2.5pt);
				\draw [fill=black] (6.,-17.) circle (2.5pt);
				\draw [fill=ffffff] (7.,-17.) circle (2.5pt);
				\draw [fill=black] (8.,-17.) circle (2.5pt);
				\draw [fill=ffffff] (5.,-18.) circle (2.5pt);
				
				\draw [fill=black] (-2.,-19.) circle (2.5pt);
				\draw [fill=ffffff] (-1.,-19.) circle (2.5pt);
				\draw [fill=black] (0.,-19.) circle (2.5pt);
				\draw [fill=ffffff] (1.,-19.) circle (2.5pt);
				\draw [fill=black] (2.,-19.) circle (2.5pt);
				\draw [fill=ffffff] (3.,-19.) circle (2.5pt);
				\draw [fill=black] (4.,-19.) circle (2.5pt);
				\draw [fill=black] (6.,-19.) circle (2.5pt);
				\draw [fill=ffffff] (7.,-19.) circle (2.5pt);
				\draw [fill=black] (8.,-19.) circle (2.5pt);
				\draw [fill=ffffff] (9.,-19.) circle (2.5pt);
				\draw [fill=black] (10.,-19.) circle (2.5pt);
				\draw [fill=ffffff] (5.,-19.) circle (2.5pt);
				
				\draw [fill=black] (-2.,-21.) circle (2.5pt);
				\draw [fill=ffffff] (-1.,-21.) circle (2.5pt);
				\draw [fill=black] (0.,-21.) circle (2.5pt);
				\draw [fill=ffffff] (1.,-21.) circle (2.5pt);
				\draw [fill=black] (2.,-21.) circle (2.5pt);
				\draw [fill=ffffff] (3.,-21.) circle (2.5pt);
				\draw [fill=black] (4.,-21.) circle (2.5pt);
				\draw [fill=black] (6.,-21.) circle (2.5pt);
				\draw [fill=ffffff] (7.,-21.) circle (2.5pt);
				\draw [fill=black] (8.,-21.) circle (2.5pt);
				\draw [fill=ffffff] (9.,-21.) circle (2.5pt);
				\draw [fill=black] (10.,-21.) circle (2.5pt);
				\draw [fill=ffffff] (5.,-22.) circle (2.5pt);
				
				\draw [fill=black] (-2.,-24.) circle (2.5pt);
				\draw [fill=ffffff] (-1.,-24.) circle (2.5pt);
				\draw [fill=black] (0.,-24.) circle (2.5pt);
				\draw [fill=ffffff] (1.,-24.) circle (2.5pt);
				\draw [fill=black] (2.,-24.) circle (2.5pt);
				\draw [fill=ffffff] (3.,-24.) circle (2.5pt);
				\draw [fill=black] (4.,-24.) circle (2.5pt);
				\draw [fill=black] (6.,-24.) circle (2.5pt);
				\draw [fill=ffffff] (7.,-24.) circle (2.5pt);
				\draw [fill=black] (8.,-24.) circle (2.5pt);
				\draw [fill=ffffff] (9.,-24.) circle (2.5pt);
				\draw [fill=black] (10.,-24.) circle (2.5pt);
				\draw [fill=ffffff] (5.,-25.) circle (2.5pt);
				
				\draw [fill=black] (-2.,-27.) circle (2.5pt);
				\draw [fill=ffffff] (-1.,-27.) circle (2.5pt);
				\draw [fill=black] (0.,-27.) circle (2.5pt);
				\draw [fill=ffffff] (1.,-27.) circle (2.5pt);
				\draw [fill=black] (2.,-27.) circle (2.5pt);
				\draw [fill=ffffff] (3.,-27.) circle (2.5pt);
				\draw [fill=black] (4.,-27.) circle (2.5pt);
				\draw [fill=black] (6.,-27.) circle (2.5pt);
				\draw [fill=ffffff] (7.,-27.) circle (2.5pt);
				\draw [fill=black] (8.,-27.) circle (2.5pt);
				\draw [fill=ffffff] (9.,-27.) circle (2.5pt);
				\draw [fill=black] (10.,-27.) circle (2.5pt);
				\draw [fill=ffffff] (5.,-28.) circle (2.5pt);
				
				\draw [fill=black] (-2.,-30.) circle (2.5pt);
				\draw [fill=ffffff] (-1.,-30.) circle (2.5pt);
				\draw [fill=black] (0.,-30.) circle (2.5pt);
				\draw [fill=ffffff] (1.,-30.) circle (2.5pt);
				\draw [fill=black] (2.,-30.) circle (2.5pt);
				\draw [fill=ffffff] (3.,-30.) circle (2.5pt);
				\draw [fill=black] (4.,-30.) circle (2.5pt);
				\draw [fill=ffffff] (5.,-30.) circle (2.5pt);
				\draw [fill=black] (6.,-30.) circle (2.5pt);
				\draw [fill=ffffff] (7.,-30.) circle (2.5pt);
				\draw [fill=black] (8.,-30.) circle (2.5pt);
				\draw [fill=ffffff] (9.,-30.) circle (2.5pt);
				\draw [fill=black] (10.,-30.) circle (2.5pt);
				\draw [fill=ffffff] (11.,-30.) circle (2.5pt);
				\draw [fill=black] (12.,-30.) circle (2.5pt);
				
				\draw [fill=black] (-2.,-32.) circle (2.5pt);
				\draw [fill=ffffff] (-1.,-32.) circle (2.5pt);
				\draw [fill=black] (0.,-32.) circle (2.5pt);
				\draw [fill=ffffff] (1.,-32.) circle (2.5pt);
				\draw [fill=black] (2.,-32.) circle (2.5pt);
				\draw [fill=ffffff] (3.,-32.) circle (2.5pt);
				\draw [fill=black] (4.,-32.) circle (2.5pt);
				\draw [fill=ffffff] (5.,-33.) circle (2.5pt);
				\draw [fill=black] (6.,-32.) circle (2.5pt);
				\draw [fill=ffffff] (7.,-32.) circle (2.5pt);
				\draw [fill=black] (8.,-32.) circle (2.5pt);
				\draw [fill=ffffff] (9.,-32.) circle (2.5pt);
				\draw [fill=black] (10.,-32.) circle (2.5pt);
				\draw [fill=ffffff] (11.,-32.) circle (2.5pt);
				\draw [fill=black] (12.,-32.) circle (2.5pt);
				
				\draw [fill=black] (-2.,-35.) circle (2.5pt);
				\draw [fill=ffffff] (-1.,-35.) circle (2.5pt);
				\draw [fill=black] (0.,-35.) circle (2.5pt);
				\draw [fill=ffffff] (1.,-35.) circle (2.5pt);
				\draw [fill=black] (2.,-35.) circle (2.5pt);
				\draw [fill=ffffff] (3.,-35.) circle (2.5pt);
				\draw [fill=black] (4.,-35.) circle (2.5pt);
				\draw [fill=black] (6.,-35.) circle (2.5pt);
				\draw [fill=ffffff] (7.,-35.) circle (2.5pt);
				\draw [fill=black] (8.,-35.) circle (2.5pt);
				\draw [fill=ffffff] (9.,-35.) circle (2.5pt);
				\draw [fill=black] (10.,-35.) circle (2.5pt);
				\draw [fill=ffffff] (11.,-35.) circle (2.5pt);
				\draw [fill=black] (12.,-35.) circle (2.5pt);
				\draw [fill=ffffff] (5.,-36.) circle (2.5pt);
			\end{scriptsize}
		\end{tikzpicture}
		\caption{Families $\cS_1$, $\cS_2$, $\cS_3$, and $\cS_4$, where the black vertices are the vertices with fixed degrees.}
		\label{fig:Sfamily}
	\end{figure}	
\end{center}

\begin{observation} \label{obs:S_k}
	Let $S \in \cS_k$ for an integer $k \ge 1$. 
	\begin{itemize}
		\item[$(i)$] $S$ is a tree in which all leaves belong to the same partite class. Moreover, this partite class corresponds to $X(S)$.
		\item[$(ii)$] If $v \in V(S)\setminus X(S)$, then $\deg_S(v)=2$. In particular, $\deg_S(u)=2$ holds for every support vertex $u$ of $S$.
		\item[$(iii)$] If a tree $T$ contains a substructure $S$, then every leaf of $S$ is a leaf in $T$ and every support vertex of $S$ is a support vertex in $T$.
		\item[$(iv)$] $|V(S)|$ is odd and $|V(S)| \le 2^k -1$.
		\item[$(v)$]  Let $\cP=\{P_1, P_2, \dots \}$. Then $\cP \cap \cS_k= \{P_{2^{k-1}+1}, P_{2^{k-1}+3}, \dots , P_{2^{k}-1 }\}$.
	\end{itemize}
\end{observation}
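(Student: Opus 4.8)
The plan is to prove all five parts simultaneously by induction on $k$, exploiting the recursive construction in Definition~\ref{def:S_k}. The base case $k=1$ is $S=P_1$ with $X(S)=V(S)$: parts (i)–(iv) are immediate (the single vertex occupies one partite class, there are no vertices outside $X(S)$, and $|V(S)|=1$ is odd and equals $2^1-1$), while (v) records $\cP\cap\cS_1=\{P_1\}$. For the inductive step I would fix $S\in\cS_k$ and write it through its origin vertex $z_k$ as the join of $S^1\in\cS_{k-1}$ and $S^2\in\bigcup_{i<k}\cS_i$, where $z_k$ is made adjacent to $x^1\in X(S^1)$ and $x^2\in X(S^2)$ and $X(S)=X(S^1)\cup X(S^2)$.

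For (i) I would first note that $S$ is a tree: $S^1,S^2$ are disjoint trees by induction and $z_k$ is a single new vertex of degree $2$ joining them, so $|E(S)|=|V(S)|-1$ and $S$ is connected. For the bipartition, the induction hypothesis gives that $X(S^j)$ is a partite class of $S^j$ for $j\in[2]$; placing $z_k$ in the class opposite to $X(S^1)\cup X(S^2)$ is consistent because its two neighbors $x^1,x^2$ lie in $X(S^1),X(S^2)$, so $X(S)$ is a partite class of the connected (hence uniquely bipartite) tree $S$. Since attaching edges only raises degrees and $\deg_S(z_k)=2$, every leaf of $S$ is already a leaf of $S^1$ or $S^2$ and thus lies in $X(S^1)\cup X(S^2)=X(S)$; this gives (i). Part (ii) follows in the same step: every $v\in V(S)\setminus X(S)$ other than $z_k$ retains its degree from $S^1$ or $S^2$ (only $x^1,x^2\in X(S)$ gain an edge), so $\deg_S(v)=2$ by induction, while $\deg_S(z_k)=2$ by construction; and any support vertex, being adjacent to a leaf that lies in the partite class $X(S)$, must itself lie in $V(S)\setminus X(S)$ and hence have degree $2$.

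Parts (iii) and (iv) are then short. For (iii), if $T$ contains $S$ as a substructure, every leaf $w$ of $S$ lies in $X(S)$ by (i), so the fixed-degree condition of Definition~\ref{def:S_k} gives $\deg_T(w)=\deg_S(w)=1$, i.e. $w$ is a leaf of $T$; a support vertex $u$ of $S$ is adjacent in $S\subseteq T$ to such a leaf $w$, hence is a support vertex of $T$. For (iv), $|V(S)|=|V(S^1)|+|V(S^2)|+1$ is a sum of two odd numbers plus one, so it is odd, and it is at most $(2^{k-1}-1)+(2^{k-1}-1)+1=2^k-1$ by the inductive bounds on $|V(S^1)|$ and $|V(S^2)|$.

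The main work is part (v), which I expect to be the obstacle. The key structural observation is that the join of $S^1,S^2$ through $z_k$ is a path if and only if both $S^1$ and $S^2$ are paths and $z_k$ is attached to an endpoint of each, since any branch vertex, or any attachment at an interior vertex, leaves a vertex of degree $\ge 3$, and degrees never decrease. Consequently $P_n\in\cS_k^*$ exactly when $n=n_1+n_2+1$ with $P_{n_1}\in\cS_{k-1}$ and $P_{n_2}\in\bigcup_{i<k}\cS_i$. Writing $D_j$ for the set of orders of paths in $\cS_j$, the induction hypothesis gives $D_1=\{1\}$ and $D_j=\{\, n \text{ odd} : 2^{j-1}+1\le n\le 2^j-1\,\}$ for $j\ge 2$, so the orders of paths in $\bigcup_{i<k}\cS_i$ are exactly the odd integers in $[1,2^{k-1}-1]$. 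A short sumset computation then shows that $\{\, n_1+n_2 : n_1\in D_{k-1},\ n_2 \text{ odd in } [1,2^{k-1}-1]\,\}$ covers all even integers in $[2^{k-1},2^k-2]$ (treating $k\in\{2,3\}$ separately, where $D_{k-1}$ is a singleton). Finally, since $\cS_k=\cS_k^*\setminus\bigcup_{i<k}\cS_i$ deletes precisely the paths of order at most $2^{k-1}-1$, the surviving path orders are exactly the odd integers in $[2^{k-1}+1,2^k-1]$, which together with (iv) yields both inclusions of (v). The delicate points are that only endpoint-attachments of path summands produce paths, and that the set difference defining $\cS_k$ is what forces the lower bound $2^{k-1}+1$.
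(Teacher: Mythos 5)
Your proof is correct and follows exactly the route the paper intends: the paper gives no explicit argument for this observation, remarking only that each part "can be easily verified by proceeding with induction on $k$" from the recursive definition of $\cS_k$, and your write-up carries out precisely that induction (with part (v) correctly reduced to the fact that a path arises in $\cS_k^*$ only from endpoint-attachments of path summands, followed by the sumset computation). No gaps.
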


Observation~\ref{obs:S_k}$(i)$ shows that, for each $k$, the family  $\cS_k$ can be considered simply as a family of graphs, without associating each $S\in \cS_k$ with a set $X(S) \subseteq V(S)$. The fixed vertex degrees are easy to identify by considering $\deg_S(v)$ for every vertex $v$ which is in the same partite class as the leaves of $S$. 

In the following proposition, we state a simple characterization for the family $\cS$. For it recall that the {\em subdivision graph}, $S(G)$, of a graph $G$ is the graph obtained from $G$ by subdividing each edge exactly once, cf.~\cite{burzio_subdivision_1998}.

\begin{proposition} \label{prop:cS}
	$\cS = \{S(G):\ G\ \mbox{is\ a\ tree}\}$.
\end{proposition}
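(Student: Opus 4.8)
The plan is to prove the two inclusions separately, organised around the following \emph{un-subdivision} principle: a graph $H$ is the subdivision graph of some tree if and only if $H$ is a tree in which one of the two partite classes consists entirely of vertices of degree $2$. Indeed, if $H=S(G)$ then the subdivision vertices form exactly such a class; conversely, given a tree $H$ with a partite class $Y$ of degree-$2$ vertices, replacing each $y\in Y$ (which has precisely two neighbours) by an edge joining those two neighbours recovers a tree $G$ with $S(G)=H$.

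For the inclusion $\cS\subseteq\{S(G):\ G\ \mbox{a tree}\}$, I would take $S\in\cS_k$ and invoke Observation~\ref{obs:S_k}$(i)$--$(ii)$: $S$ is a tree whose two partite classes are $X(S)$ and $V(S)\setminus X(S)$, and every vertex of $V(S)\setminus X(S)$ has degree $2$. Thus $V(S)\setminus X(S)$ is a partite class of degree-$2$ vertices, and I un-subdivide to obtain a graph $G$ on vertex set $X(S)$ whose edges correspond bijectively to the vertices of $V(S)\setminus X(S)$. Two routine checks remain. First, $G$ is a tree: it is connected (as $S$ is), and counting the edges of $S$ (each incident, by bipartiteness, to exactly one vertex of the degree-$2$ class) gives $|V(S)\setminus X(S)|=|X(S)|-1$, so $G$ has $|X(S)|$ vertices and $|X(S)|-1$ edges. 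Second, $G$ is simple: a repeated edge would force a $4$-cycle in the tree $S$. Then $S(G)=S$ by construction.

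For the reverse inclusion I would argue by strong induction on $n=|V(G)|$ that $S(G)\in\cS$. The base case $n=1$ gives $S(K_1)=P_1\in\cS_1$. For $n\ge 2$, pick any edge $e=x^1x^2$ of $G$; deleting it splits $G$ into two smaller trees $G^1,G^2$ with $x^j\in V(G^j)$. By induction $S(G^j)\in\cS$, say $S(G^j)\in\cS_{k_j}$, and we may assume $k_1\ge k_2$. Since $X(S(G^j))=V(G^j)$, we have $x^j\in X(S(G^j))$, and $S(G)$ is obtained from $S(G^1)$ and $S(G^2)$ by adding the subdivision vertex $z$ of $e$ adjacent to $x^1$ and $x^2$. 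This is exactly the operation defining $\cS_{k_1+1}^*$, with $S^1=S(G^1)\in\cS_{k_1}$, $S^2=S(G^2)\in\bigcup_{i\le k_1}\cS_i$, and $z$ as origin vertex. Hence $S(G)\in\cS_{k_1+1}^*$. Finally, the defining relation $\cS_k=\cS_k^*\setminus\bigcup_{i<k}\cS_i$ yields $\cS_k^*\subseteq\cS_k\cup\bigcup_{i<k}\cS_i\subseteq\cS$, so $S(G)\in\cS$, completing the induction.

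The step I expect to require the most care is the index bookkeeping in the reverse inclusion: the recursive scheme insists on $S^1\in\cS_{k-1}$ while allowing $S^2$ to come from any lower family, so one must verify that an arbitrary edge split can always be fitted into this asymmetric pattern. This is resolved by always designating the piece of larger index as $S^1$ and setting $k=k_1+1$; the companion fact $\cS_k^*\subseteq\cS$ then absorbs the possibility that the assembled graph is, in the end, placed in a strictly smaller family.
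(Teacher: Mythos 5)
Your proof is correct and follows essentially the same route as the paper: both reduce the statement to the characterization of subdivision graphs of trees as trees with a partite class of degree-$2$ vertices, verify one inclusion via Observation~\ref{obs:S_k}$(i)$--$(ii)$, and prove the other by induction using the decomposition at a single subdivision vertex (your edge deletion in $G$ is exactly the paper's deletion of a degree-$2$ vertex $z\in A_2$ of $S(G)$), reassembling via Definition~\ref{def:S_k}. The only cosmetic difference is that you spell out the un-subdivision construction and the index bookkeeping slightly more explicitly than the paper does.
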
  

\begin{proof} 
Set $\cS' = \{S(G):\ G\ \mbox{is\ a\ tree}\}$. We are going to prove that $\cS = \cS'$. 

Clearly, the elements of $\cS'$ are trees. Moreover, if $T$ is an arbitrary tree with partite classes $A_1$ and $A_2$ such that $A_1$ contains at least one leaf or an isolated vertex, then it is straightforward to observe that $T \in \cS'$ if and only if\/ 
\begin{itemize}
	\item[$(\ast)$] $\deg_T(v)=2$ holds for every $v \in A_2$. 
\end{itemize}

If $T \in \cS$, Observation~\ref{obs:S_k}(i) and (ii) yield the desired property $(\ast)$ with $A_1=X(T)$ and $A_2=V(T)\setminus X(T)$. Hence $\cS \subseteq \cS'$. 

To prove that $\cS' \subseteq \cS$ also holds, suppose that $T\in \cS'$.  Then $T$ fulfils $(\ast)$. We proceed by induction to conclude $T \in \cS$. This clearly holds if the order of $T$ is at most $3$. Let $|V(T)|>3$. Then, $A_2 \neq \emptyset$, and we fix an arbitrarily chosen vertex $z$ from $A_2$. By our condition, $\deg_T(z)=2$ and hence $T-z$ consists of two components, say $T_1$ and $T_2$. Observe that $\deg_{T_1}(v)= \deg_T(v)$ holds for every $v \in V(T_1)$ except for the neighbor of $z$ from $T_1$.  Since $z \in A_2$, this neighbor is from $A_1$, and therefore, we infer that  $\deg_{T_1}(v)= \deg_T(v)=2$ remains true for all vertices $v$ contained in the partite class $V(T_1)\cap A_2$ of $T_1$. Applying the induction hypothesis to $T_1$, we conclude $T_1 \in \cS$. In the same way, we may obtain $T_2 \in \cS$. Definition~\ref{def:S_k} then shows that $T$ can be constructed from $T_1$, $T_2$, and the origin vertex $z$, and therefore $T\in \cS_k$ for an appropriate integer $k$. \end{proof}

\begin{theorem} \label{thm:tree}
	Let $T$ be a tree and $k$ a positive integer. Then, $\gsmb'(T)=k$  if and only if\/ $T$ contains a substructure $S$ from $ \cS_k$, and no substructure from $\cup_{i=1}^{k-1} \cS_i$.
\end{theorem}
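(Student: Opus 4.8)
The plan is to prove the equivalent statement $\gsmb'(T)=m(T)$, where $m(T)=\min\{i : T \text{ has a substructure in }\cS_i\}$ (with $m(T)=\infty$ if none exists), and then read off the theorem: $\gsmb'(T)=k$ iff $m(T)=k$ iff $T$ has a substructure in $\cS_k$ but none in $\cup_{i<k}\cS_i$. This splits into an upper bound \textbf{(U)}: a substructure in $\cS_k$ forces $\gsmb'(T)\le k$; and a lower bound \textbf{(L)}: $\gsmb'(T)\le k$ forces a substructure in $\cup_{i\le k}\cS_i$. Both go by induction on $k$, with base case $k=1$ immediate, since a $\cS_1=\{P_1\}$-substructure is an isolated vertex, i.e.\ $T=P_1$, which is exactly the case $\gsmb'(T)=1$.

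For \textbf{(U)} I would work with the subhypergraph $\cH_S^X$ on $V(S)$ whose winning sets are $\{N_S[x]:x\in X(S)\}$; because $\deg_T(x)=\deg_S(x)$ for $x\in X(S)$ we have $N_T[x]=N_S[x]$, so $\cH_S^X$ is a subhypergraph of $\cH_T$ and $\w_M^M(\cH_T)\le \w_M^M(\cH_S^X)$ by Proposition~\ref{prop:delete}. Writing $S\in\cS_k$ as $S^1\in\cS_{k-1}$ and $S^2\in\cup_{i<k}\cS_i$ joined at the origin $z_k$, the key observation is that among the sets $\{N_S[x]:x\in X(S)\}$ the vertex $z_k$ lies only in $N_S[x^1]$ and $N_S[x^2]$; hence Staller plays $z_k$ first, and by Proposition~\ref{prop:delete-shrink}(ii) the game continues on $\cH_S^X\mid z_k$, which is exactly the disjoint union $\cH_{S^1}^{X(S^1)}\sqcup\cH_{S^2}^{X(S^2)}$. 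Induction bounds each component's Maker-start number by $k-1$, so Proposition~\ref{prop:H-comp} gives $\w_M^B(\cH_S^X\mid z_k)\le k-1$ and therefore $\w_M^M(\cH_S^X)\le k$.

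For \textbf{(L)} I would assume $\gsmb'(T)=k\ge 2$ and fix an optimal first move $s_1$. Let $C_1,\dots,C_\ell$ be the components of $T-s_1$ with connection vertices $u_1,\dots,u_\ell$ (the neighbours of $s_1$), ordered so that $\gsmb'(C_1)\le\dots\le\gsmb'(C_\ell)$. Restricting to the neighbour-responses provided by Lemma~\ref{lem:game-on-tree}(ii), playing $u_j$ deletes the ``star'' winning set $N_T(s_1)$ and separates the board into $(\cH_{C_j}-u_j)\sqcup\bigsqcup_{i\ne j}\cH_{C_i}$; a short max--min computation then shows that responding in the cheapest component, $d_1=u_1$, is optimal for Dominator. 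Now Lemma~\ref{lem:game-on-tree}(i) forces Staller's continuation into a component other than $C_1$ (which in particular gives $\ell\ge2$), and together with Proposition~\ref{prop:cut}(iii) this pins the values to $\gsmb'(C_2)=k-1$ and $\gsmb'(C_1)\le k-1$, so $C_1$ and $C_2$ are both ``good''. By the induction hypothesis $C_2$ carries a substructure $S^1\in\cS_{k-1}$ and $C_1$ a substructure $S^2\in\cup_{i<k}\cS_i$. The decisive point is that these must pass through the connection vertices inside their fixed-degree sets: if, say, $u_2\notin X(S^1)$, then every $x\in X(S^1)$ has all its $T$-neighbours inside $C_2$, whence $\deg_T(x)=\deg_{S^1}(x)$ and $S^1$ would already be a substructure of $T$ from $\cup_{i<k}\cS_i$, contradicting $\gsmb'(T)=k$ via (U). Hence $u_2\in X(S^1)$ and $u_1\in X(S^2)$, and attaching $s_1=z_k$ to $x^1=u_2$ and $x^2=u_1$ produces, by Definition~\ref{def:S_k}, a member $S$ of $\cS_k^*\subseteq\cup_{i\le k}\cS_i$; the degree bookkeeping at $u_1,u_2$, where the fixed degree increases by exactly one to account for the new edge to $s_1$, confirms that $S$ is a substructure of $T$.

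The main obstacle is the value-pinning step in (L): proving that an optimal $s_1$ necessarily splits $T$ into at least two components whose Maker-start numbers are $\le k-1$, one of them exactly $k-1$. This is where Lemma~\ref{lem:game-on-tree} does the heavy lifting — part (ii) lets me assume a neighbour-response, and part (i) simultaneously rules out the degenerate possibility that only one component is cheap (which would leave no second branch for the origin) and forces $\deg_T(s_1)\ge2$. Once the two good components are secured, I expect the attachment to be the slickest part of the write-up, since it is handled entirely by the minimality of $\gsmb'(T)$ rather than by any explicit bipartite-colouring argument.
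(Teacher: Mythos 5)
Your proposal is correct and follows essentially the same route as the paper's proof: both directions rest on Lemma~\ref{lem:game-on-tree}, the decomposition of $T$ at the origin vertex (for the upper bound) and at Staller's optimal first move (for the lower bound), Proposition~\ref{prop:H-comp}, induction on $k$, and the final gluing step in which the connection vertices are forced into the fixed-degree sets because otherwise a substructure from a smaller family would already sit inside $T$. The only organizational differences are that your step (U) is exactly the paper's Proposition~\ref{prop:F_k}/Corollary~\ref{cor:F_k} (which the paper states separately, using Proposition~\ref{prop:cut}(iii) inside the theorem's proof instead), and that in (L) you pin $\gsmb'(C_1)\le k-1$ by showing directly that responding into the cheapest component is optimal, whereas the paper obtains the second substructure by a contradiction in which Dominator deviates to the connection vertex of the first one — both arguments carry out the same value-pinning.
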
 
\begin{proof} 
The statement holds for $k=1$ as Staller can win the S-game with her first move if and only if there exists a winning set $N_T[v]$ of cardinality one. It clearly means that $T$ is just an isolated vertex $P_1$ or, equivalently, there is a substructure $P_1 \in \cS_1$ in $T$. From now on, we may proceed by induction on $k$. 
\medskip

First, assume that $T$ contains a substructure $S$ from $ \cS_k$, where $k \ge 2$, but contains no substructure from $\cS_j$ if $j <k$. The latter condition implies, by the induction hypothesis, that $\gsmb'(T) \ge k$. Now consider the substructure $S\in \cS_k$ in $T$ and its origin vertex $z_k$. By the construction of the graphs belonging to $\cS_k$, the forest $T-z_k$ contains one component $T_1$ with a substructure $S^1 \in \cS_{k-1}$ and another component, say $T_2 $, with a substructure  $S^2 \in \cup_{p=1}^{k-1}\cS_p$. Then, $\gsmb'(T_2) \le \gsmb'(T_1) \le k-1$ and Proposition~\ref{prop:cut}(iii) implies
$$\gsmb'(T) \le \gsmb'(T_1)+1 \le k.$$
Therefore, $\gsmb'(T)=k$ follows that completes the proof of one direction of the statement. 
\medskip

Now, assume that $\gsmb'(T)=k$. By the hypothesis, $T$ contains no substructure from $\cup_{p=1}^{k-1} \cS_p$. Let $s_1$ be an optimal first move of Staller in the S-game and $d_1$ an optimal response of Dominator. By Lemma~\ref{lem:game-on-tree}(ii), we may suppose that $d_1$ is a neighbor of $s_1$. Let $T_1, \dots ,T_\ell$ be the components of $T-s_1$ such that $d_1 \in V(T_1)$. After the move $d_1$, by Proposition~\ref{prop:delete-shrink}, the game continues on $\cH'=(\cH_T \mid s_1) -d_1$ with Staller's next optimal move $s_2$. By Lemma~\ref{lem:game-on-tree}(i), $s_2 \notin V(T_1)$. By our supposition, the moves $s_1$ and $d_1$ were optimal and therefore, $\w_M^M(\cH')=k-1$. 

Observe that $\cH'$ consists of several components and  $\ell-1$ of them exactly correspond to the closed neighborhood hypergraphs $\cH_{T_2}, \dots , \cH_{T_\ell}$. If $d_1$ is a leaf in $T$, then there is no further component in $\cH'$. Otherwise, the further components of $\cH'$ are contained in $\cH_1=\cH_{T_1}-d_1$. Proposition~\ref{prop:H-comp} implies that $\w_M^M(\cH')$, that is $k-1$, equals the minimum of the values $\w_M^M(\cH_1), \w_M^M(\cH_{T_2}), \dots,  \w_M^M(\cH_{T_\ell})$. It also follows that Staller's (Maker's) optimal strategy is to play in the same (appropriate) component of $\cH'$ starting with her move $s_2$. As $s_2 \notin V(\cH_1)$, there exists a tree $T_j$, $2 \le j \le \ell$, such that $\gsmb'(T_j)=k-1$. By our hypothesis, $T_j$ contains a substructure $S^1$ from $\cS_{k-1}$. As $S^1$ is a subgraph but not a substructure in $T$, the vertex $s_1$ must be adjacent to a vertex $t_j$ from $X(S^1)$. Note that, as $T$ is a tree, $s_1$ is adjacent to exactly one vertex $t_i$ from each $T_i$, $i\in [\ell]$. Consequently, no component $T_i$ can contain two vertex disjoint substructures from $\cup_{p=1}^{k-1} \cS_p$ and, if $T_i$ contains a substructure $S$ from $\cup_{p=1}^{k-1} \cS_p$, then $t_i$, the neighbor of $s_1$ in $T_i$, belongs to $X(S)$.

We will show that $T-s_1$ contains a substructure $S^2$ from $\cup_{p=1}^{k-1} \cS_p$ that is vertex disjoint to $S^1$. Assume for a contradiction that it is not true. Thus, by the hypothesis, $\gsmb'(T_i) \ge k$ holds for every $i\neq j$. In this situation, suppose that Dominator plays $d_1'=t_j$ instead of $d_1=t_1$. By Lemma~\ref{lem:game-on-tree}(i), each remaining move of Staller belongs to a a subtree $T_i$, $i\neq j$, and she needs at least $k$ further moves to win the game. This contradicts the condition $\gsmb'(T)=k$ and the assumption that Staller plays optimally.  

To conclude the proof, we infer that $T-s_1$ contains two different components, $T_j$ and $T_s$, $j\ne s$, such that an $S^1\in \cS_{k-1}$ is a substructure in $T_j$ and an $S^2\in \cup_{p=1}^{k-1} \cS_p$ is a substructure in $T_s$. Further, $s_1$ is adjacent to $t_j$ and $t_s$ which belong to $X(S^1)$ and  $X(S^2)$ respectively. By the definition of $\cS_k$, the subgraph induced by $V(S^1)\cup V(S^2)\cup \{s_1\}$ is included in $\cS_k$ and, as the subgraph complies with the fixed vertex degrees, it is a substructure from $\cS_k$ in $T$.
\end{proof}

\begin{remark}
	Observation~\ref{obs:S_k}(v) shows that $P_n \in \cS_{\lfloor \log_2 n \rfloor +1}$ for every odd integer $n$. Then, by Theorem~\ref{thm:tree}, we may conclude $\gsmb'(P_n)= \lfloor \log_2 n \rfloor +1$. This provides an alternative shorter proof for the formula in Theorem~\ref{thm:path}.
\end{remark}

For any graph $G$ and a subset $A \subseteq V(G)$, the hypergraph $\cH^{-A}_G$ is obtained from the closed neighhborhood hypergraph $\cH_G$ by deleting the hyperedges corresponding to the closed neighborhoods of vertices in $A$. 

To state an upper bound on the winning numbers of hypergraphs, we define a family $\cF_k$ for every positive integer $k$ as follows:
$$ \cF_k=\{ \cH^{-(V(S) \setminus X(S))}_S: S\in \cS_k \}.
$$
Alternatively, $\cF_k$ can be defined recursively starting with $\cF_1=\{\cB\}$ where $\cB$ is the hypergraph that consists of one vertex $x$ and one edge $\{x\}$. Then, for every $k \ge 2$, the family $\cF_k^*$ contains a hypergraph $\cA$ if $\cA$ can be obtained in the following way. Choose two hypergraphs $\cA_1 \in \cF_{k-1}$ and $\cA_2 \in \cup_{i=1}^{k-1} \cF_i$ and select two edges $e_1 \in E(\cA_1)$ and $e_2 \in E(\cA_2)$. To finish the construction, we take a new (origin) vertex $u$, define $e_i'=e_i \cup \{u\}$ for $i \in [2]$ and set
$$V(\cA)=V(\cA_1) \cup V(\cA_2) \cup \{u\}, \quad 
E(\cA)= E(\cA_1) \cup E(\cA_2) \cup \{e_1', e_2'\} \setminus \{e_1, e_2\}.$$
After having the family $\cF_k^*$ in hand, we set $\cF_k=\cF_k^*\setminus  \cup_{i=1}^{k-1} \cF_i$.

\begin{proposition} \label{prop:F_k}
	If a hypergraph $\cH$ contains a subhypergraph $\cH'$ from $\cF_k$, then $\w_M^M(\cH) \le k$.
\end{proposition}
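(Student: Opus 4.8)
The plan is to prove the statement in two stages. First I would reduce to the case $\cH=\cH'$, that is, show it suffices to verify $\w_M^M(\cH')\le k$ for every $\cH'\in\cF_k$. Indeed, if $\cH'$ is a subhypergraph of $\cH$, then $\w_M^M(\cH)\le\w_M^M(\cH')$: Maker plays on $\cH$ by following an optimal Maker-start strategy for $\cH'$, always choosing vertices of $V(\cH')$; whenever Breaker plays a vertex of $V(\cH)\setminus V(\cH')$, he treats this as a wasted (``skipped'') Breaker move in the imagined game on $\cH'$. Since extra Breaker moves can only help Maker and every hyperedge of $\cH'$ is also a hyperedge of $\cH$, Maker completes some $e\in E(\cH')\subseteq E(\cH)$ within $\w_M^M(\cH')$ of his own moves. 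This extends Proposition~\ref{prop:delete}(i), which only covers hypergraphs on a common vertex set.

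For the second stage I would prove $\w_M^M(\cH')\le k$ for $\cH'\in\cF_k$ by induction on $k$, using the recursive description of $\cF_k$. In the base case $k=1$ we have $\cH'=\cB$, whose unique hyperedge $\{x\}$ is a singleton; Maker wins with his first move by playing $x$ (formally, $\cB\mid x$ contains the empty hyperedge while $\cB$ does not, so Proposition~\ref{prop:delete-shrink}(ii) applies), giving $\w_M^M(\cB)=1$. For $k\ge 2$, write $\cH'$ as built from $\cA_1\in\cF_{k-1}$, $\cA_2\in\cup_{i=1}^{k-1}\cF_i$ and the origin vertex $u$, with $e_i'=e_i\cup\{u\}$. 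Maker's first move will be $u$. By Proposition~\ref{prop:delete-shrink}(ii), after this (not necessarily optimal) move the game continues as a Breaker-start game on $\cH'\mid u$, whence $\w_M^M(\cH')\le\w_M^B(\cH'\mid u)+1$.

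The key computation is that $\cH'\mid u=\cA_1\sqcup\cA_2$, the disjoint union of $\cA_1$ and $\cA_2$. Since $u$ is a new vertex lying only in $e_1'$ and $e_2'$, shrinking by $u$ leaves every edge of $E(\cA_1)\setminus\{e_1\}$ and $E(\cA_2)\setminus\{e_2\}$ untouched and turns $e_1',e_2'$ back into $e_1,e_2$; as $\cA_1$ and $\cA_2$ are vertex disjoint, the result is exactly their disjoint union. Then Proposition~\ref{prop:H-comp} gives $\w_M^B(\cH'\mid u)\le\max\{\w_M^M(\cA_1),\w_M^M(\cA_2)\}$, and the induction hypothesis, applied to $\cA_1\in\cF_{k-1}$ and to $\cA_2\in\cF_i$ with $i\le k-1$, bounds both terms by $k-1$. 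Hence $\w_M^M(\cH')\le(k-1)+1=k$, completing the induction and, together with the first stage, the proof.

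I expect the main obstacle to be the first stage: carefully justifying that passing to a subhypergraph on fewer vertices does not increase $\w_M^M$, since the available tool (Proposition~\ref{prop:delete}) is stated only for a common vertex set, and the imagination argument with ``skipped'' Breaker moves must be made precise (one should specify how Maker interprets Breaker's moves outside $V(\cH')$ so that the bookkeeping stays consistent and no vertex is claimed twice). The remaining steps are routine once one verifies the shrink identity $\cH'\mid u=\cA_1\sqcup\cA_2$ and confirms that the two pieces are genuinely vertex disjoint, so that Proposition~\ref{prop:H-comp} applies verbatim.
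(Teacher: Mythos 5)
Your proof is correct, and the core idea coincides with the paper's: Maker opens with the origin vertex $u$, which splits $\cH'$ into the two vertex-disjoint pieces $\cA_1$ and $\cA_2$, and induction on $k$ finishes the job. The difference is organizational. The paper states and inducts on the proposition exactly in the ``contains a subhypergraph'' form, which makes your Stage~1 unnecessary: after Maker plays $u$ and Breaker answers with some $v$, at least one of the disjoint subhypergraphs $\cA_1,\cA_2$ is untouched, so $(\cH\mid u)-v$ still \emph{contains} a subhypergraph from $\cup_{i=1}^{k-1}\cF_i$ and the induction hypothesis applies verbatim --- no appeal to Proposition~\ref{prop:H-comp} and, more importantly, no need for monotonicity of $\w_M^M$ under passing to a subhypergraph on a smaller vertex set. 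You correctly identify that this monotonicity is not covered by Proposition~\ref{prop:delete} (which assumes a common vertex set) and must be argued via an imagination strategy with skipped Breaker moves; your sketch of that argument is sound (when Breaker plays outside $V(\cH')$, Maker records an arbitrary unplayed vertex of $V(\cH')$ as Breaker's in the imagined game, and extra Breaker occupation there only strengthens the conclusion), and the resulting lemma is of independent use, but it is precisely the overhead the paper's formulation is designed to avoid. Your shrink identity $\cH'\mid u=\cA_1\sqcup\cA_2$ and the use of Proposition~\ref{prop:delete-shrink}(ii) for a not-necessarily-optimal first move are both fine; for the application of Proposition~\ref{prop:H-comp} note that every member of $\cF_k$ is connected (immediate by induction from the recursive construction), so $\cA_1\sqcup\cA_2$ has exactly two components and the bound $\w_M^B(\cA_1\sqcup\cA_2)\le\max\{\w_M^M(\cA_1),\w_M^M(\cA_2)\}$ reads off directly.
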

\begin{proof} The statement is clearly true for $k=1$ as only a presence of a one-element winning set ensures that Maker can win with his first move in a Maker-start game. If $k \ge 2$, we refer to the notation used in the recursive definition of $\cF_k$ and show that Maker has a strategy to win in at most $k$ moves. Consider the subhypergraph $\cH' \in \cF_k$ and suppose that Maker first plays the origin vertex $u$. That leaves the hypergraph  $\cH\mid u$ with a subhypergraph $\cH'\mid u$ for the continuation of the game. $\cH\mid u$ therefore contains two vertex disjoint subhypergraphs $\cA_1 \in \cF_{k-1}$ and $\cA_2 \in \cup_{i=1}^{k-1} \cF_i$. After Breaker's move $v$, at least one of these subhypergraphs remains untouched and $(\cH\mid u)-v$ contains a subhypergraph from $\cup_{i=1}^{k-1} \cF_i$. By the induction hypothesis, Maker can win the game in at most $k-1$ further moves. As follows, $\w_M^M(\cH) \le k$ that concludes the proof.
\end{proof}

The following consequence of Proposition~\ref{prop:F_k} gives an upper bound for the SMBD-numbers of graphs in general. Note that the corollary can also be obtained by analyzing the proof of Theorem~\ref{thm:tree}.

\begin{corollary} \label{cor:F_k}
	If a graph $G$ contains a substructure $S\in \cS_k$, then $\gsmb'(G)\le k$.
\end{corollary}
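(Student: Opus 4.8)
The plan is to reduce the corollary to Proposition~\ref{prop:F_k} by exhibiting, inside the closed neighborhood hypergraph $\cH_G$, a subhypergraph that lies in $\cF_k$. Since $\gsmb'(G)=\w_M^M(\cH_G)$, it suffices to prove $\w_M^M(\cH_G)\le k$, and Proposition~\ref{prop:F_k} reduces this in turn to finding a subhypergraph of $\cH_G$ that belongs to $\cF_k$. Given the substructure $S\in\cS_k$ of $G$, the natural candidate is the hypergraph $\cH^{-(V(S)\setminus X(S))}_S$, which lies in $\cF_k$ by the very definition of that family.

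So the core of the argument is to verify that $\cH^{-(V(S)\setminus X(S))}_S$ is a subhypergraph of $\cH_G$ in the sense of Section~2, i.e.\ both $V(\cH^{-(V(S)\setminus X(S))}_S)\subseteq V(\cH_G)$ and $E(\cH^{-(V(S)\setminus X(S))}_S)\subseteq E(\cH_G)$. The vertex condition $V(S)\subseteq V(G)$ holds simply because $S$ is a subgraph of $G$. For the edge condition, recall that the hyperedges of $\cH^{-(V(S)\setminus X(S))}_S$ are exactly the sets $N_S[x]$ with $x\in X(S)$, the closed neighborhoods in $S$ of the fixed-degree vertices; I would then check that each such set is a genuine hyperedge of $\cH_G$.

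The hard part --- indeed the only nontrivial point --- is this last verification, and it is precisely where the definition of a substructure enters. For $x\in X(S)$ we have $N_S(x)\subseteq N_G(x)$ because $S$ is a subgraph of $G$, while the defining property $\deg_G(x)=\deg_S(x)$ forces $|N_G(x)|=|N_S(x)|$; hence $N_S(x)=N_G(x)$ and therefore $N_S[x]=N_G[x]$. Thus every remaining hyperedge of $\cH^{-(V(S)\setminus X(S))}_S$ coincides, as a subset of $V(G)$, with the closed neighborhood $N_G[x]$, which is an edge of $\cH_G$. This gives $E(\cH^{-(V(S)\setminus X(S))}_S)\subseteq E(\cH_G)$, so $\cH^{-(V(S)\setminus X(S))}_S$ is a subhypergraph of $\cH_G$ lying in $\cF_k$, and applying Proposition~\ref{prop:F_k} yields $\w_M^M(\cH_G)\le k$, that is, $\gsmb'(G)\le k$. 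It is worth emphasizing that the fixed-degree hypothesis is indispensable here: without it, $S$ could be a subgraph of $G$ in which some closed neighborhood $N_S[x]$ is strictly smaller than $N_G[x]$, and then the edges prescribed by $\cF_k$ would fail to appear in $\cH_G$; the substructure notion is designed exactly to rule this out.
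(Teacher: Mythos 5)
Your proof is correct and follows essentially the same route as the paper: exhibit $\cH^{-(V(S)\setminus X(S))}_S$ as a subhypergraph of $\cH_G$ lying in $\cF_k$ and invoke Proposition~\ref{prop:F_k} together with $\gsmb'(G)=\w_M^M(\cH_G)$. The only difference is that you spell out the verification that the fixed-degree condition forces $N_S[x]=N_G[x]$ for $x\in X(S)$, a step the paper leaves implicit in the word ``isomorphic.''
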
 
\begin{proof} Observe that the closed neighborhood hypergraph $\cH_G$ contains a subhypergraph $\cH'$ isomorphic to $\cH^{-(V(S) \setminus X(S))}_S$. As $S \in \cS_k$, we have $\cH' \in \cF_k$ and Proposition~\ref{prop:F_k} implies  $\w_M^M(\cH_G) \le k$. Since $\gsmb'(G)= \w_M^M(\cH_G)$, this proves the statement.
\end{proof}

In view of Theorem~\ref{thm:tree}, we pose: 

\begin{problem}
	Determine the computational complexity of the decision problem whether $\gsmb'(G)\le k$ holds for a graph $G$ and a positive integer $k$, where $k$ is part of the input. In particular, what about the same question restricted to the class of trees? 
\end{problem}

\section{Subdivided stars}

In this section we consider the SMBD-numbers of subdivided stars. Let $S(n_1,\dots, n_\ell)$ be the tree obtained from the paths $P_{n_1}, \dots, P_{n_\ell}$ and a {\em central vertex} $x$ by making $x$ adjacent to one end of each path. The paths $P_{n_1}, \dots, P_{n_\ell}$ obtained after the deletion of the central vertex are called \emph{branches}. Throughout, we assume that $\ell \ge 2$ and $n_1,\dots, n_\ell$ are positive integers.

For a subdivided star $T=S(n_1,\dots, n_\ell)$, the residual graph $R(T)$, which is obtained by iteratively removing pendant edges, clearly contains at most one strong support vertex. Therefore, by Theorem~\ref{thm:outcometrees}(i), $\gsmb(T)=\infty$ always holds. Concerning the S-game on $T$, we consider three cases according to the parities of $n_1, \dots , n_\ell$. If there is exactly one odd number among  $n_1, \dots , n_\ell$, then $T$ has a perfect matching and $\gsmb'(T)= \infty $ follows from Theorem~\ref{thm:outcometrees}(ii). If there are at least two odd numbers among  $n_1, \dots , n_\ell$, Theorem~\ref{thm:subdiv-odd} will establish the explicit formula. The last case is when $T$ is an \emph{all-even} subdivided star that is, $n_i$ is even for each $i \in [\ell]$. If $\ell = 2$, $T$ is a path and Theorem~\ref{thm:path} establishes the formula for $\gsmb'(T)$. If $\ell=3$, our   Theorem~\ref{thm:subdiv-even} will answer the question. For larger values of $\ell$, we provide a sharp upper bound, but the exact formula for SMBD-numbers of all-even subdivided stars with at least four branches remains an open problem. 

\subsection{Not-all-even subdivided stars} \label{sec:not-all-even}

As Staller cannot win the S-game on $T=S(n_1,\dots, n_\ell)$ if $T$ contains exactly one odd branch, we concentrate on the case when there exist at least two odd branches.

\begin{theorem} \label{thm:subdiv-odd}
	If $S(n_1,\dots, n_\ell)$ is a subdivided star and $n_1$, $n_2$ are the two smallest odd numbers among $n_1, \dots, n_\ell$, then
	$$\gsmb'(S(n_1,\dots, n_\ell))=\lceil \log_2(n_1+n_2+1)\rceil.$$
\end{theorem}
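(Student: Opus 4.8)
The plan is to pin down $\gsmb'(T)$ for $T=S(n_1,\dots,n_\ell)$ entirely through the characterization in Theorem~\ref{thm:tree}. Writing $k=\lceil \log_2(n_1+n_2+1)\rceil$, it suffices to show that $T$ contains a substructure from $\cS_k$ but none from $\cup_{i=1}^{k-1}\cS_i$. Since $n_1,n_2$ are odd, $n_1+n_2+1$ is an odd integer at least $3$; in particular it is not a power of two, so $k=\lfloor \log_2(n_1+n_2+1)\rfloor+1$, which by Observation~\ref{obs:S_k}(v) is exactly the index for which $P_{n_1+n_2+1}\in \cS_k$.

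For the upper bound I would exhibit a single explicit substructure. Consider the path $P$ running from the leaf end of branch $P_{n_1}$, through the central vertex $x$, to the leaf end of branch $P_{n_2}$; it has $n_1+1+n_2$ vertices, so $P\cong P_{n_1+n_2+1}\in \cS_k$. The key observation is that $x$ lies at odd distance ($n_1$, resp.\ $n_2$) from each of the two leaf ends, hence $x$ belongs to the partite class complementary to $X(P)$ and carries no fixed degree; every vertex of $X(P)$ is then either a leaf of $T$ (matching degree $1$) or an internal branch vertex (matching degree $2$). Thus $P$ is a genuine substructure, and Corollary~\ref{cor:F_k}, equivalently the easy direction of Theorem~\ref{thm:tree}, gives $\gsmb'(T)\le k$.

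The heart of the argument, and the main obstacle, is the matching lower bound: by Theorem~\ref{thm:tree} I must show that every substructure $S'$ of $T$ lying in $\cS$ satisfies $S'\in \cS_j$ with $j\ge k$. Since $T$ is a subdivided star, any connected subgraph $S'$ is a ``sub-subdivided-star'': either $S'$ avoids $x$, and hence lies inside a single branch, or $S'$ contains $x$ and consists of $x$ together with prefixes of some branches. I would then invoke Observation~\ref{obs:S_k}: every leaf of $S'$ is a leaf of $T$ (so a branch prefix can only terminate at the far, degree-$1$ end of its branch), every vertex outside $X(S')$ has $S'$-degree $2$, and all leaves of $S'$ share one partite class. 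If $S'$ sits in one branch, then its odd order forces both endpoints into $X(S')$, hence both would be leaves of $T$; but a branch has only one leaf of $T$, a contradiction, so no substructure avoids $x$.

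In the remaining case $x\in V(S')$, the leaf condition forces $S'$ to use full branches, and the single-partite-class condition forces all chosen branches to have the same parity. If they are all even, their far ends lie in $x$'s own class, so $x\in X(S')$ and its fixed degree would have to be $\deg_T(x)=\ell$, forcing $S'=T$; this is impossible because $T$ has at least two odd branches whose far ends lie in the other class. If they are all odd, then $x\notin X(S')$, so $\deg_{S'}(x)=2$ by Observation~\ref{obs:S_k}(ii), meaning exactly two odd branches are used and $S'\cong P_{n_i+n_{i'}+1}$ for two odd $n_i,n_{i'}$. Using $n_i+n_{i'}\ge n_1+n_2$ together with Observation~\ref{obs:S_k}(v), every such $S'$ lies in $\cS_j$ with $j\ge k$. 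Combining the two bounds through Theorem~\ref{thm:tree} yields $\gsmb'(T)=k$. The delicate points to execute carefully are the degree bookkeeping for $x$ and the verification that the parity of $n_1,n_2$ makes $n_1+n_2+1$ the minimizer among all admissible branch pairs.
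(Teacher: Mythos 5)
Your proposal is correct and follows essentially the same route as the paper's proof: both reduce the claim to Theorem~\ref{thm:tree}, exhibit the path $P_{n_1+n_2+1}$ through the central vertex as the witnessing substructure in $\cS_k$, and rule out smaller substructures by the case analysis on whether $x\in X(S)$ using Observation~\ref{obs:S_k}. The only cosmetic difference is that the paper treats $\ell=2$ separately via Theorem~\ref{thm:path} and disposes of substructures avoiding $x$ with a one-line remark, whereas you handle both uniformly inside the same case analysis.
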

\begin{proof}
If $\ell =2$, the subdivided star is an odd path of order $n_1+n_2+1$ and, by Theorem~\ref{thm:path}, $\gsmb'(S(n_1,n_2))=\lceil \log_2(n_1+n_2+1)\rceil$ is true. From now on we assume $\ell \ge 3$.

Let $T=S(n_1,\dots, n_\ell)$ and $k=\lceil \log_2(n_1+n_2+1)\rceil$. By Theorem~\ref{thm:tree}, it suffices to show that  $T$ contains a substructure   from $ \cS_k$ and contains no substructure from $\cup_{i=1}^{k-1} \cS_i$. The first part of the statement clearly holds because the paths $P_{n_1}$ and $P_{n_2}$ together with the central vertex $x$ form a  subgraph $P_{n_1+n_2+1}$ in $T$. By Observation~\ref{obs:S_k}(v),  $P_{n_1+n_2+1} \in \cS_k$. Further, by Observation~\ref{obs:S_k}(i),  since $n_1$ and $n_2$ are odd numbers, $x$ does not belong to $X(P_{n_1+n_2+1})$. We infer that  $P_{n_1+n_2+1}$ is a substructure in $T$.
\medskip

Assume that $T$ contains a substructure $S \in \cS_i$. By Observation~\ref{obs:S_k}(i) and (ii), $S$ is a tree where $X(S)$ 
corresponds to the partite class containing all leaves of $S$, while all vertices in the partite class $V(S)\setminus X(S)$ are of degree $2$ in $S$. As at least two leaves of $T$ together with the path between them are present in $S$, the central vertex $x$ also belongs to $S$. 
\begin{itemize}
	\item Suppose that $x \in X(S)$. Then $N_T[x] \subseteq V(S)$ and hence $S\cong T$. 
	However, as some odd numbers are present among $n_1, \dots , n_\ell$, Observation~\ref{obs:S_k}(i) implies $x \notin X(S)$  that contradicts the supposition $x \in S$. 
	\item Suppose that $x \notin X(S)$. By Observation~\ref{obs:S_k}(i) and (ii), $\deg_S(x)=2$ and $x$ is at an odd distance from each leaf of $S$. We infer that the substructure $S$ is a path induced by $\{x\} \cup V(P_{n_a}) \cup V(P_{n_b})$, where $n_a$ and $n_b$ are odd integers. By our condition, $n_a+n_b+1 \ge n_1+n_2 +1$ and, consequently, $i \ge k$ whenever $S \in \cS_i$ holds.
\end{itemize} 
By Theorem~\ref{thm:tree} we conclude that $\gsmb'(S(n_1,\dots, n_\ell)) = k = \lceil \log_2(n_1+n_2+1)\rceil$.
\end{proof}

\subsection{All-even subdivided stars} \label{sec:all-even}

Suppose that $T=S(n_1, n_2, n_3)$ is an all-even subdivided star and $n_1 \leq n_2 \leq n_3$. We say that $T$ is \emph{reducible}, if $\lceil \log_2(n_3)\rceil = \lceil \log_2(n_1+n_2+n_3+1)\rceil$. Then we define the \emph{reduced graph} of $T$ as   
$$T'=S(n_1,n_2, n_3-2^{\lceil \log_2 (n_1+n_2+n_3+1) \rceil -1}).$$
By the condition on $n_3$ and as $\lceil \log_2(n_1+n_2+n_3+1)\rceil \ge 3$, $T'$ is also an all-even subdivided star with three branches. 
We say that $T$ is \emph{non-reducible}, if for every $i \in [3]$,
\begin{equation}
	\label{eq:non-reducible}
	\lceil \log_2(n_i)\rceil < \lceil  \log_2(n_1+n_2+n_3+1)\rceil\,.
\end{equation}
Using this terminology, we give a recursive definition for two types of subdivided stars.
\begin{definition} \label{def:type}
	Let $T=S(n_1, n_2, n_3)$ be a subdivided star such that $n_1, n_2, n_3$ are positive even integers and $n_1\leq n_2 \leq n_3$ holds. Let us set $n=n_1+n_2+n_3+1$.
	\begin{itemize}
		\item[$(i)$] If $T$ is non-reducible, then 
		\begin{itemize}
			\item $T \in \cT_1$, if $\lceil \log_2 (n_1+n_2+1) \rceil = \lceil \log_2 n \rceil$;
			\item $T \in \cT_0$, if $\lceil \log_2 (n_1+n_2+1) \rceil < \lceil \log_2 n \rceil$.
		\end{itemize}
		\item[$(ii)$] If $T$ is reducible and $T'$ is its reduced graph, then
		\begin{itemize}
			\item $T \in \cT_1$, if $T' \in \cT_1$ and $\lceil \log_2 (n-2^{\lceil \log_2 n \rceil-2}) \rceil = \lceil \log_2 n \rceil$;
			\item otherwise $T \in \cT_0$.
		\end{itemize}
	\end{itemize}
\end{definition}
By~\eqref{eq:non-reducible} and Definition~\ref{def:type}(i), $T$ is non-reducible and $T\in \cT_1$ if and only if 
\begin{equation}
	\label{eq:non-reducible-T1}
	\lceil \log_2 n_3 \rceil < \lceil \log_2 (n_1+n_2+1) \rceil = \lceil \log_2 (n) \rceil\,.
\end{equation}
Similarly, $T$ is non-reducible and $T\in \cT_0$ if and only if
\begin{equation}
	\label{eq:non-reducible-T0}
	\max\{\lceil \log_2 n_3 \rceil, \lceil \log_2 (n_1+n_2+1)\rceil\} < \lceil \log_2 (n) \rceil\,.
\end{equation}

\begin{theorem} \label{thm:subdiv-even}
	If $T=S(n_1,n_2,n_3)$ is a subdivided star of order $n=n_1+n_2+n_3+1$,  and  $n_1, n_2,n_3$ are positive even integers, then  
	\begin{align} 
		\gsmb'(T) &= \begin{cases} 
			\lceil \log_2 n \rceil; & \enskip T\in \cT_0, \\
			\lceil \log_2 n \rceil +1 ; & \enskip T\in \cT_1. 
		\end{cases}	
	\end{align}
\end{theorem}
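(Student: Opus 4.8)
### Proof Proposal for Theorem~\ref{thm:subdiv-even}

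The plan is to prove both directions (upper and lower bound on $\gsmb'(T)$) by structural induction aligned with the recursive Definition~\ref{def:type}, since the classes $\cT_0$ and $\cT_1$ are themselves defined recursively via the reduction operation. The base cases will be the non-reducible stars, where the membership criteria~\eqref{eq:non-reducible-T1} and~\eqref{eq:non-reducible-T0} give direct arithmetic characterizations in terms of $\lceil \log_2 n_3\rceil$ and $\lceil \log_2(n_1+n_2+1)\rceil$. For the inductive step I would show that the reduced graph $T'$ satisfies $\gsmb'(T')=\gsmb'(T)$ when $T$ is reducible, so that the formula transfers; this is the natural fixed point of the definition and the reason $\cT_0,\cT_1$ are preserved under reduction.

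\emph{Upper bound (Staller's strategy).} First I would establish $\gsmb'(T)\le \lceil\log_2 n\rceil$ for $T\in\cT_0$ and $\gsmb'(T)\le\lceil\log_2 n\rceil+1$ for $T\in\cT_1$. By Theorem~\ref{thm:tree} and Corollary~\ref{cor:F_k}, it suffices to exhibit a substructure $S\in\cS_k$ inside $T$ with $k$ equal to the claimed value. For the non-reducible $\cT_0$ case the two shortest even branches together with the central vertex $x$ give a path $P_{n_1+n_2+1}$; since $n_1,n_2$ are even, $n_1+n_2+1$ is odd, and by Observation~\ref{obs:S_k}(v) this lies in $\cS_{\lceil\log_2(n_1+n_2+1)\rceil}$. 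When $\lceil\log_2(n_1+n_2+1)\rceil=\lceil\log_2 n\rceil$ (the $\cT_1$ regime) this path alone is not enough and I would instead build a larger substructure using all three branches, attaching a second sub-path at the central vertex via the $\cS_k^*$ construction of Definition~\ref{def:S_k}. The arithmetic comparing $n_1+n_2+1$, $n_3$, and $n$ under the ceiling-of-log function controls exactly which construction succeeds, matching the case split in~\eqref{eq:non-reducible-T1} and~\eqref{eq:non-reducible-T0}.

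\emph{Lower bound (Dominator's strategy).} This is the harder direction and the main obstacle. I would need to show $T$ contains \emph{no} substructure from $\cup_{i=1}^{k-1}\cS_i$ for the relevant $k$, again invoking Theorem~\ref{thm:tree}. As in the proof of Theorem~\ref{thm:subdiv-odd}, any substructure $S\in\cS_i$ must contain the central vertex $x$ (since it contains two leaves of $T$ and the path between them), and since all $n_i$ are even, a careful parity analysis determines whether $x\in X(S)$ or $x\notin X(S)$. The key new difficulty relative to the odd case is that with all branches even, $x$ \emph{can} lie in $X(S)$: then $\deg_S(x)=\deg_T(x)=\ell$ would be forced, so $S$ would have to incorporate portions of every branch in a balanced way, and I must bound the minimal $i$ for which such a substructure exists. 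This amounts to an optimization over how the three even branches can be split into paths realizing an $\cS_i$ structure, and it is precisely this optimization whose answer flips between $\lceil\log_2 n\rceil$ and $\lceil\log_2 n\rceil+1$ depending on the $\cT_0$ versus $\cT_1$ dichotomy.

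\emph{Handling reducibility.} For reducible $T$, the cleanest route is to prove that $\gsmb'(T)=\gsmb'(T')$ directly, so that the non-reducible analysis above closes the induction. I would verify that removing a pendant $P_2$-block of length $2^{\lceil\log_2 n\rceil-1}$ from the longest branch changes neither the optimal substructure order nor the winner, using Proposition~\ref{prop:cut}(i) to sandwich $\gsmb'(T')$ and $\gsmb'(T)$ and the fact that $\lceil\log_2 n_3\rceil=\lceil\log_2 n\rceil$ means the long branch is already ``saturated'' and contributes nothing extra to Staller's fastest win. The subtle point, and where I expect to spend the most effort, is confirming that the $\cT_0/\cT_1$ label is genuinely invariant under reduction and that the auxiliary condition $\lceil\log_2(n-2^{\lceil\log_2 n\rceil-2})\rceil=\lceil\log_2 n\rceil$ in Definition~\ref{def:type}(ii) is exactly the arithmetic needed for the two sub-paths at $x$ to simultaneously reach the target order.
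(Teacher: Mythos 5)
Your plan has two concrete errors that break it, plus a third step that is only a sketch.

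First, the upper bound. You propose that for non-reducible $T\in\cT_0$ the path $P_{n_1+n_2+1}$ through the two shortest branches and the centre $x$ is a substructure in $\cS_{\lceil\log_2(n_1+n_2+1)\rceil}$. This fails precisely because $n_1$ and $n_2$ are \emph{even}: $x$ is then at even distance from both ends of that path, hence $x\in X(P_{n_1+n_2+1})$ by Observation~\ref{obs:S_k}(i), and a substructure requires $\deg_T(x)=\deg_S(x)=2$, whereas $\deg_T(x)=3$. (This is exactly the point where the all-even case diverges from Theorem~\ref{thm:subdiv-odd}, where the odd parities push $x$ out of $X(S)$.) In fact the paper shows that an all-even subdivided star admits \emph{no} proper substructure: any $S\in\cS$ inside $T$ must satisfy $S=T$. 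So Theorem~\ref{thm:tree}/Corollary~\ref{cor:F_k} cannot be invoked with a smaller substructure at all; the paper gets the upper bound $\lceil\log_2 n\rceil+1$ (and $\lceil\log_2 n\rceil$ in the $\cT_0$ cases) from the cut-vertex bound of Proposition~\ref{prop:cut}(iii) applied at the neighbour of $x$ in the longest branch, together with Theorem~\ref{thm:path}.

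Second, the reduction step. The identity $\gsmb'(T)=\gsmb'(T')$ you want to prove is false. If $T$ is reducible then $n'=|V(T')|=n-2^{\lceil\log_2 n\rceil-1}\le 2^{\lceil\log_2 n\rceil-1}$, so for instance when both $T$ and $T'$ lie in $\cT_0$ the theorem itself gives $\gsmb'(T)=\lceil\log_2 n\rceil$ but $\gsmb'(T')=\lceil\log_2 n'\rceil\le\lceil\log_2 n\rceil-1$. The correct use of $T'$ is asymmetric: for the upper bound one shows $\gsmb'(T')\le k-1$ and applies Proposition~\ref{prop:cut}(iii) at the vertex $w$ whose removal leaves $T'$ and a path of order $2^{k-1}-1$; for the lower bound one argues game-theoretically that Dominator can answer each possible first move $s_1$ of Staller (via Lemma~\ref{lem:game-on-tree} and the pairing Lemma~\ref{lem:pairing}) so as to confine her to a component requiring at least $k$ further moves. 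Finally, your lower-bound "optimization over how the three even branches can be split into paths realizing an $\cS_i$ structure" is exactly the crux of the theorem and is left entirely unexecuted; since the only substructure is $T$ itself, the problem reduces to determining the unique $m$ with $T\in\cS_m$, and you give no mechanism for computing it, whereas the paper resolves it by explicit strategies for both players.
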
 
\begin{proof} We use the notation given in the theorem and assume without loss of generality that $n_1\leq n_2 \leq n_3$ holds. Let $x$ be the center of $T$ while $P^1, P^2, P^3$ denote the branches of order $n_1,n_2,n_3$, respectively.

First we show that $T \in \cS$. As $n$ is odd, there is no perfect matching in $T$ and Theorem~\ref{thm:outcometrees}(ii) implies $\gsmb'(T) <\infty$. Thus, by Theorem~\ref{thm:tree}, there exists a substructure $S\in \cS$ in $T$. By Observation~\ref{obs:S_k}(i), $S$ is a tree and the set $X(S)$ contains all leaves of $S$ and the vertices being at an even distance from a leaf. By Observation~\ref{obs:S_k}(iii), every leaf of $S$ is a leaf in $T$ and therefore, $x \in X(S)$. Hence, $N_S(x)=N_T(x)$ and all leaves of $T$ are present in $S$. Consequently, $S=T$ and $T \in \cS$ hold.  

Second, we prove that $\gsmb'(T)$ equals either $\lceil \log_2 n \rceil$ or $\lceil \log_2 n \rceil +1$. As $T\in \cS$, Observation~\ref{obs:S_k}(iv) and Theorem~\ref{thm:tree} immediately give $ n \leq 2^{\gsmb'(T)}-1$. This proves $\lceil \log_2 n \rceil \leq \gsmb'(T)$. For the upper bound, we consider the cut vertex $t_3$ that is the neighbor of $x$ from the branch $P^3$. The graph $T-t_3$ contains two components one of which is a path of order $n_3-1$ and the other one is a path of order $n_1+n_2+1$. Recall that if $m$ is a positive even integer, then $\lceil \log m\rceil = \lfloor \log (m-1)\rfloor + 1$. Using this fact, Proposition~\ref{prop:cut}(iii), Theorem~\ref{thm:path}, and inequalities $n_3 < n$ and $n_1+n_2+1 <n$, we can estimate $\gsmb'(T)$ as follows: 
\begin{align*}
	\gsmb'(T) & \leq \max \{\gsmb'(P_{n_3-1}), \gsmb'(P_{n_1 + n_2 + 1})\} + 1 \\
	& = \max\{\lfloor \log_2 (n_3-1) \rfloor + 1, \lfloor \log_2 (n_1+n_2+1) \rfloor +1 \} +1 \\
	& = \max\{\lceil \log_2 n_3 \rceil, \lceil \log_2 (n_1+n_2+1) \rceil  \} + 1 \\
	& \leq \lceil \log_2 n \rceil +1\,. 
\end{align*}

We have just proved that $\gsmb'(T) \in \{ \lceil \log_2 n \rceil, \lceil \log_2 n \rceil+1  \}$. To decide which of the two possibilities holds and to prove the theorem, we first consider all non-reducible graphs and then proceed by induction on the number of vertices when dealing with reducible graphs. Let $k=\lceil \log_2 n \rceil$. 

\paragraph{Case 1:} $T$ is a non-reducible graph. \\
By the definition of non-reducibility, $\lceil \log_2(n_3)\rceil < k$. 
We consider two cases.
\medskip

\noindent
\emph{Case 1.1:} $T$ is non-reducible and $T\in \cT_0$.\\
By~\eqref{eq:non-reducible-T0},  
$$M=\max\{\lceil \log_2 n_3 \rceil, \lceil \log_2 (n_1+n_2+1) \rceil \} \leq k-1 .$$
Considering the cut vertex $t_3 \in N_T(x) \cap V(P^3)$ again, Proposition~\ref{prop:cut}(iii) and  Theorem~\ref{thm:path} imply $\gsmb'(T) \leq M+1 \leq k$ for this case. Together with the lower bound $k \leq \gsmb'(T)$, this proves $\gsmb'(T)=k$ for every non-reducible $T$ from $\cT_0$. 
\medskip

\noindent
\emph{Case 1.2:} $T$ is non-reducible and $T\in \cT_1$.\\
Since Staller can win the S-game on $T$, there is an optimal first move $s_1$ for her. Note first that $s_1 \neq x$. Indeed, if $s_1 = x$, then Dominator plays any neighbor of $x$ and the undominated vertices can be covered by a matching, hence Dominator will win the game according to Lemma~\ref{lem:pairing}. We may claim the same if all components of $T-s_1$ are even. To see it, observe that Dominator can always play a neighbor of $s_1$ such that the undominated vertices can be covered by a matching. We have thus demonstrated that $s_1\ne x$ and that not all components of $T-s_1$ are even. 

If $s_1$ is from $P^3$, let $T^1$ and $T^2$ be the two components of $T-s_1$ such that $V(T^1) \subseteq V(P^3)$.  Dominator may play the neighbor of $s_1$ that belongs to $T^1$. Then, by Lemma~\ref{lem:game-on-tree}, Staller's optimal response is a vertex from $T^2$ and, under optimal strategies, the game continues on $T^2$ and finishes with the $(\gsmb'(T^2)+1)^{\text{st}}$ move of Staller. As $T^2$ is an all-even subdivided star or an odd path, $T^2 \in \cS$. Then, by~\eqref{eq:non-reducible-T1}, Observation~\ref{obs:S_k}(iv), and by our condition, we get
$$ \gsmb'(T^2)+1 \geq \lceil \log_2 |V(T^2)| \rceil +1 \geq \lceil \log_2 (n_1+n_2+1) \rceil +1 =k+1. $$  
This proves that after Staller's move $s_1 \in V(P^3)$, Dominator has a strategy which ensures that Staller cannot win before her $(k+1)^{\text{st}}$ move. The proof is similar, if $s_1$ is from $P^1$ or $P^2$. We conclude $\gsmb'(T) \geq k+1$, which in turn results in the desired equality $\gsmb'(T)=k+1$.

\paragraph{Case 2:} $T$ is a reducible graph. \\
The reduced graph of $T$ is $T'=S(n_1,n_2, n_3-2^{k-1})$ and $n'=|V(T')|= n- 2^{k-1} \leq 2^{k-1}$. From now on, we proceed by induction. 
\medskip

\noindent
\emph{Case 2.1:} $T$ is reducible and $T\in \cT_0$.\\
Definition~\ref{def:type}(ii) allows two possibilities for $T$ belonging to $\cT_0$. We first show that $\gsmb'(T') \le k-1$ holds in both cases. 
\begin{itemize}
	\item If $T \in \cT_0$ and $T' \in \cT_0$, the induction hypothesis implies $\gsmb'(T') = \lceil \log_2 n' \rceil \leq k-1$.
	\item If $T \in \cT_0$ and $T'\in \cT_1$, then also $\lceil \log_2 (n-2^{k-2}) \rceil \leq \lceil \log_2 n \rceil -1 = k-1 $ must hold which gives $ n-2^{k-2} \leq 2^{k-1}$. This in turn implies 
	$$n'=n-2^{k-1} \leq  2^{k-2}.$$
	From this inequality, we may infer $\lceil \log_2 n' \rceil +1 \leq k-1$ and conclude $\gsmb'(T') \leq k-1$ again.
\end{itemize}
Let $w$ be the (unique) vertex of $P^3$ such that $T-w$ consists of a component isomorphic to $T'$ and a path component of order $2^{k-1}-1$. We have already seen that $\gsmb'(T') \leq k-1$ and the path component clearly satisfies $\gsmb'=k-1$. Applying Proposition~\ref{prop:cut}(iii), we get
$$\gsmb'(T) \leq \max\{\gsmb'(T'), k-1 \} +1 \leq (k-1) +1 =k . $$ 
Since $\gsmb'(T) \ge k$ also holds, we conclude $\gsmb'(T)=k$ as stated in the theorem.
\medskip

\noindent
\emph{Case 2.2:} $T$ is reducible and $T\in \cT_1$.\\
We are going to prove that  $\gsmb'(T) = k+1$. As  $\gsmb'(T) \leq k+1$ always holds, it suffices to show that there is no optimal first move $s_1$ for Staller that allows her to win within $k$ (or less) moves. 
\begin{itemize}
\item If $T-s_1$ contains no component of odd order, then Staller cannot win in the continuation of the game. 
\item If Staller plays a vertex $s_1 \in V(P^1) \cup V(P^2)$, then Dominator responds by choosing the neighbor of $s_1$ which is not incident to the $(s_1,x)$-path. Then, by Lemma~\ref{lem:game-on-tree}, Staller's optimal strategy is playing on the component $T^2$ of $T-s_1$ which contains $x$. As $T^2 \in \cS$ and $|V(T^2)| \geq n_3 +n_1 +1 \geq 2^{k-1} +3$, Observation~\ref{obs:S_k}(iv) implies $\gsmb'(T^2) \geq k$. This shows that if Staller chooses her first vertex from $P^1$ or $P^2$, she cannot win with $k$ moves. 
\item If $s_1 \in V(P^3)$ such that $T-s_1$ contains a path component of order at least $2^{k-1}+1$, then we can handle it as in the previous paragraph. 
	\item Let $w$ be the vertex from $P^3$ such that $T-w$ contains a component isomorphic to  $T'$ and a path of order $2^{k-1}-1$. If $s_1=w$, Dominator may reply by playing the neighbor of $w$ from the path and then, by Lemma~\ref{lem:game-on-tree}, Staller's optimal strategy is to play (optimally) on $T'$ in the continuation. Hence, she needs at least $\gsmb'(T')+1$ moves to win the game. By Definition~\ref{def:type}, $T \in \cT_1$ only if $T'\in \cT_1$ and $n-2^{k-2}>2^{k-1}$. It follows that
	$$n'=n-2^{k-1} > 2^{k-2}\,,$$
	and hence $\lceil \log_2 n'\rceil \ge k-1$. 
	By hypothesis,  $T'\in \cT_1$ implies  $\gsmb'(T')= \lceil \log_2 n' \rceil +1$ and therefore, we have  $\gsmb'(T') \geq k$ by the above inequality. We may conclude again that Staller needs at least $k+1$ moves to win the game. 
	\item  The last case is when $s_1$ is a vertex from $P^3$ the removal of which results in a path component of order $2^{k-1}-1-2a$ (for a positive integer $a$) and an all-even subdivided star $T''$. In this case, again, Dominator can play the neighbor of $s_1$ from the path and Staller needs at least further $\gsmb'(T'')$ moves to win. Observe that $T'$ can be obtained from $T''$ by iteratively ($a$ times) removing  weak support vertices of degree $2$ and the attached leaves. It follows then from Proposition~\ref{prop:cut}(i) that $\gsmb'(T')\leq \gsmb'(T'')$. We conclude that Staller needs at least $\gsmb'(T'') +1 \geq \gsmb'(T') +1 \geq k+1$ moves to win. 
\end{itemize}
The discussed cases show that $\gsmb'(T) \geq k+1$, if $T$ is reducible and contained in $\cT_1$. It settles the last inductive case.  
\end{proof}

\begin{corollary}
	If $k\ge 1$ is an integer, then 
	$$\gsmb'(S(2k,2k,2k)) = \lceil \log_2(4k+1)\rceil + 1\,.$$
\end{corollary}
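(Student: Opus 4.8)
The plan is to derive the corollary directly from Theorem~\ref{thm:subdiv-even} by determining which of the two types the star $T = S(2k,2k,2k)$ belongs to. Write $n_1 = n_2 = n_3 = 2k$, so that $n = n_1 + n_2 + n_3 + 1 = 6k+1$, and set $m = \lceil \log_2(4k+1)\rceil$. Since $n_1 + n_2 + 1 = 4k+1$, the target value is precisely $m+1$, and the quantity $\lceil \log_2(n_1+n_2+1)\rceil$ that governs the $\cT_0$/$\cT_1$ dichotomy in Definition~\ref{def:type}(i) equals $m$.

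First I would verify that $T$ is non-reducible. Because $6k+1 > 3\cdot 2k > 2\cdot 2k$, the numbers $n_3 = 2k$ and $n = 6k+1$ differ by more than a factor of two, so $\lceil \log_2 n_3\rceil < \lceil \log_2 n\rceil$; as $n_1 = n_2 = n_3$, inequality~\eqref{eq:non-reducible} holds for every $i \in [3]$. Hence $T$ is non-reducible and Case~1 of the theorem applies.

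Next I would locate $k' := \lceil \log_2 n\rceil$ relative to $m$. From the chain $4k+1 < 6k+1 < 2(4k+1)$, valid for every $k\ge 1$, it follows that $m \le k' \le m+1$, i.e.\ $k'$ equals $m$ or $m+1$. Now split according to Definition~\ref{def:type}(i). If $m = \lceil \log_2(n_1+n_2+1)\rceil = k'$, then $T \in \cT_1$ and Theorem~\ref{thm:subdiv-even} gives $\gsmb'(T) = k'+1 = m+1$. If instead $\lceil \log_2(n_1+n_2+1)\rceil < k'$, then $m < k'$ forces $k' = m+1$, we have $T \in \cT_0$, and the theorem gives $\gsmb'(T) = k' = m+1$. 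In both cases $\gsmb'(T) = m+1 = \lceil \log_2(4k+1)\rceil + 1$, which is the asserted formula.

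The argument is essentially a case check with no genuine obstacle; the only step requiring care is the floor/ceiling bookkeeping, and in particular the two-sided bound $m \le k' \le m+1$. This bound is exactly what makes the two possible outcomes, $T\in\cT_0$ and $T\in\cT_1$, collapse to the same final value $m+1$, so that the corollary holds uniformly without having to decide which type occurs for a given $k$.
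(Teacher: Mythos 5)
Your argument is correct and follows essentially the same route as the paper: verify non-reducibility of $S(2k,2k,2k)$, establish the two-sided bound $\lceil \log_2(4k+1)\rceil \le \lceil \log_2(6k+1)\rceil \le \lceil \log_2(4k+1)\rceil + 1$ from $4k+1 < 6k+1 < 2(4k+1)$, and observe that the $\cT_0$ and $\cT_1$ branches of Theorem~\ref{thm:subdiv-even} both yield $\lceil \log_2(4k+1)\rceil + 1$. The paper phrases the case split as ``$T\in\cT_1$'' versus ``$T\in\cT_0$'' rather than ``$m=k'$'' versus ``$m<k'$'', but by Definition~\ref{def:type}(i) these are the same dichotomy.
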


\begin{proof}
Let $k\ge 1$ and set $T = S(2k,2k,2k)$. Then $T$ is non-reducible because $\lceil \log_2(2k)\rceil < \lceil \log_2(6k+1)\rceil$. Assume first that $T\in \cT_1$. Then, by definition, $\lceil \log_2(4k+1)\rceil = \lceil \log_2(6k+1)\rceil$. Moreover, by Theorem~\ref{thm:subdiv-even}, 
$$\gsmb'(T) = \lceil \log_2(6k+1)\rceil + 1 = \lceil \log_2(4k+1)\rceil + 1\,.$$ 
Assume second that $T\in \cT_0$. By definition, $\lceil \log_2(4k+1)\rceil < \lceil \log_2(6k+1)\rceil$. Since $|\log_2(6k+1) - \log_2(4k+1)| < 1$, it follows that $\lceil \log_2(6k+1)\rceil - \lceil \log_2(4k+1)\rceil \le 1$. Consequently, $\lceil \log_2(4k+1)\rceil + 1 = \lceil \log_2(6k+1)\rceil$. Applying Theorem~\ref{thm:subdiv-even} again, we get that also in this case 
$$\gsmb'(T) = \lceil \log_2(6k+1)\rceil = \lceil \log_2(4k+1)\rceil + 1\,,$$ 
and we are done. 
\end{proof}

The missing case for subdivided stars is the determination of $\gsmb'(S(n_1, \dots, n_\ell))$, where $\ell \ge 4$ and each $n_i$ is even. We leave it as an open problem, but provide a sharp upper bound on the parameter.

\begin{problem}
	Determine the value of $\gsmb'(S(n_1, \dots, n_\ell))$ for the cases when $\ell \geq 4$ and  $n_i$ is a positive even number for each $i \in [\ell ]$.
\end{problem}
\medskip

\begin{proposition}
	Let $T=S(n_1, \dots, n_\ell)$ be a subdivided star such that $\ell \ge 3$, $n_1\geq \dots \geq n_\ell$, and $n_i$ is a positive even integer for each $i \in [\ell ]$. Then,
	$$\gsmb'(T) \leq \max(\{i+ \lceil \log_2 n_i \rceil: i \in [\ell-2]  \} \cup \{\ell-2 + \lceil \log_2(n_{\ell-1}+n_\ell +1)\rceil \} ), $$
	and the bound is sharp for every $\ell \ge 3$.
\end{proposition}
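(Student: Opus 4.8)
The plan is to prove the upper bound by repeatedly peeling off the longest branch through Proposition~\ref{prop:cut}(iii), and then to confirm sharpness using the family of ``even spiders'' $S(2,\dots,2)$.

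For the upper bound, write $T_j = S(n_j, n_{j+1}, \dots, n_\ell)$ for $j \in [\ell-1]$, so that $T_1 = T$ and $T_{\ell-1} = S(n_{\ell-1}, n_\ell)$ is the odd path $P_{n_{\ell-1}+n_\ell+1}$. For $j \le \ell-2$, let $t_j$ be the neighbor of the center on the branch of order $n_j$; since $n_j \ge 2$ it is a cut vertex of $T_j$ whose removal leaves exactly the two components $P_{n_j-1}$ and $T_{j+1}$. As $n_j$ is even, Theorem~\ref{thm:path} together with the convention $\lceil \log_2 m\rceil = \lfloor \log_2(m-1)\rfloor + 1$ gives $\gsmb'(P_{n_j-1}) = \lceil \log_2 n_j\rceil$, so Proposition~\ref{prop:cut}(iii) yields
$$\gsmb'(T_j) \le \max\{\lceil \log_2 n_j\rceil,\ \gsmb'(T_{j+1})\} + 1, \qquad 1 \le j \le \ell-2,$$
while $\gsmb'(T_{\ell-1}) = \lceil \log_2(n_{\ell-1}+n_\ell+1)\rceil$ by Theorem~\ref{thm:path}. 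I would then unfold this recurrence: applying $\max\{a,b\}+1 = \max\{a+1, b+1\}$ repeatedly lets the contribution of branch $i$ enter with an additive shift of $i$, giving
$$\gsmb'(T) \le \max\left(\{i + \lceil \log_2 n_i\rceil : i \in [\ell-2]\} \cup \{(\ell-2) + \lceil \log_2(n_{\ell-1}+n_\ell+1)\rceil\}\right),$$
which is exactly the claimed bound. The cleanest way to make this rigorous is an auxiliary downward induction on $j$ bounding $\gsmb'(T_j)$ by the analogous maximum taken over the indices $i \ge j$, each shifted by $-(j-1)$.

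For sharpness I would take, for each $\ell \ge 3$, the subdivided star $T = S(2,\dots,2)$ with $\ell$ branches, i.e.\ the subdivision graph of the star $K_{1,\ell}$, which lies in $\cS$ by Proposition~\ref{prop:cS}. Here every $\lceil \log_2 n_i\rceil = 1$ and $\lceil \log_2(n_{\ell-1}+n_\ell+1)\rceil = \lceil \log_2 5\rceil = 3$, so the right-hand side equals $\max\{\ell-1,\ \ell+1\} = \ell+1$, and the bound just proved gives $\gsmb'(T) \le \ell+1$. For the matching lower bound I would invoke Theorem~\ref{thm:tree}: it suffices to show $T$ has no substructure in $\cup_{j=1}^{\ell}\cS_j$. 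Two facts settle this. First, a short degree analysis shows \emph{$T$ is its own only substructure}: any substructure meeting the center $x$ must place $x$ in the leaf partite class (each leaf of $T$ lies at the even distance $2$ from $x$), which by the substructure condition forces $\deg(x)=\ell$ and hence the whole tree, whereas a substructure avoiding $x$ lives inside a single branch $P_2$ and is too small to belong to $\cS$ (Observation~\ref{obs:S_k}(iv)). Second, I would prove the max-degree estimate $\Delta(W) \le k-1$ for every $W \in \cS_k$ with $k \ge 3$, by induction on the recursive construction in Definition~\ref{def:S_k}: the origin vertex has degree $2$ and raises the degree of at most two vertices by one, so $\Delta(W)\le \Delta(S^1)+1 \le k-1$. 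Since $\Delta(T)=\ell$, this shows $T\notin\cS_j$ for $3\le j\le\ell$ (and $T\notin\cS_1,\cS_2$ for order reasons), so by fact one $T$ has no substructure below level $\ell+1$, whence $\gsmb'(T)=\ell+1$ and the bound is attained.

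The routine part is the upper bound and its unfolding. The main obstacle is the lower bound in the sharpness claim, namely ruling out that $S(2,\dots,2)$ admits a faster win; the right tool is the structural characterisation of Theorem~\ref{thm:tree}, and the decisive technical ingredient is the max-degree lemma $\Delta(W)\le k-1$ for $W\in\cS_k$, which pins the subdivided star $K_{1,\ell}$ exactly at level $\ell+1$ of the $\cS$-hierarchy.
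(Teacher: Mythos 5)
Your proof is correct, but it diverges from the paper's in both halves, most substantially in the sharpness argument. For the upper bound, the paper writes out Staller's adaptive strategy explicitly (play $t_1$; if Dominator answers outside $P^1$, finish on $P^1-t_1$, otherwise move to $t_2$, and so on, ending on the path through the centre); your unfolding of Proposition~\ref{prop:cut}(iii) along the chain $T_1,\dots,T_{\ell-1}$ is exactly that strategy packaged as a recurrence, so this part is the same idea in cleaner form. For sharpness the routes genuinely differ: the paper constructs the family $Z(\ell,p)$ with geometrically growing branch lengths precisely so that the crude order bound of Observation~\ref{obs:S_k}(iv) already forces $\gsmb'\ge \ell+p-1$, whereas your example $S(2,\dots,2)$ has only $2\ell+1$ vertices, so the order bound is far too weak and you must instead pin down its exact level in the $\cS$-hierarchy. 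Your way requires an extra structural lemma not in the paper (the maximum-degree bound on members of $\cS_k$) together with the full force of Theorem~\ref{thm:tree}; the paper's example avoids any new lemma at the cost of a less natural family. Both are valid, and your example is arguably more transparent.

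Two local points in your sharpness argument need tightening, though neither is fatal. First, a substructure avoiding the centre $x$ is not excluded by Observation~\ref{obs:S_k}(iv): the single vertex $P_1$ does lie in $\cS_1$; what rules it out is the fixed-degree condition, since $X(P_1)=V(P_1)$ would force a vertex of degree $0$ in $T$. Second, your induction for $\Delta(W)\le k-1$ cannot run on $\Delta(S^1)$ alone: the base case $\cS_2=\{P_3\}$ has $\Delta(P_3)=2$, so the naive bound $\Delta(S^1)+1$ gives $3$ for $k=3$, which is false for $\cS_3=\{P_5,P_7\}$. The induction goes through once you track $\max_{v\in X(S)}\deg_S(v)$ instead, using that the origin vertex only raises the degree of the chosen vertices $x^1\in X(S^1)$ and $x^2\in X(S^2)$ and that all vertices outside $X$ have degree $2$ by Observation~\ref{obs:S_k}(ii); this yields $\max_{v\in X(W)}\deg_W(v)\le k-1$ and hence $\Delta(W)\le\max\{2,k-1\}$, which is what you need.
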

\begin{proof}  Let $x$ be the center of $T$, and $P^1, \dots , P^\ell$ be the branches of order $n_1, \dots ,n_\ell$ respectively. The neighbor of $x$ from $P^i$ will be denoted by $t_i$ for $i\in [\ell]$. 

To prove the upper bound, we describe a strategy for Staller. In her first move, Staller plays $t_1$. We have two cases.
\begin{itemize}
	\item If Dominator replies with a move $d_1$ from the component of $T-t_1$ which contains $x$, Staller continues the game by playing (optimally) on the path $P^1-t_1$. After the moves $t_1$, $d_1$, the Maker-Breaker game continues on the hypergraph $\cH'= (\cH_T -t_1) \mid d_1$ with Staller's (Maker's) move. As $\cH''= \cH_{P^1-t_1}$ is a subhypergraph in $\cH'$, Proposition~\ref{prop:delete}(i) implies 
	$$\w_M^M(\cH')\leq \w_M^M(\cH'')=\gsmb'(P^1-t_1)= \lceil \log_2 n_1 \rceil .$$
	In this case, Staller can win the S-game on $T$ in $1+ \lceil \log_2 n_1 \rceil $ moves.
	\item If Dominator replies with a move $d_1 \in V(P^1-t_1)$, then Staller plays $t_2$ as her next move.  
\end{itemize}

In general, Staller's strategy is the following. For $i \in [\ell-2]$, if Staller plays $t_i \in N_T(x) \cap V(P^i)$ as her $i^{\text{th}}$ 
move and Dominator responds by playing a vertex $d_i \notin V(P^i)$, then Staller continues playing (optimally) on $P^i -t_i$. It allows her to win the game in $i +  \lceil \log_2 n_i \rceil$ moves. If Dominator responds by choosing a vertex $d_i \in V(P^i)$ and $i \leq \ell-3$, Staller's next move is $t_{i+1}$. If Dominator's reply is $d_i \in V(P^i)$ and $i=\ell-2$, Staller plays on the path $P'$ induced by $V(P^{\ell-1}) \cup \{x\} \cup V(P^{\ell})$ and can win the game in $\ell-2+ \lceil \log_2(n_{\ell-1}+n_\ell +1)\rceil$ moves. 

Consequently, either there exists an $i \in [\ell-2]$ such that Staller plays $t_{i}$ and Dominator replies with $d_{i}\notin V(P^{i})$ or Staller, after playing $t_1, \dots, t_{\ell-2}$, can win by playing on $P'$. In the former case, Staller plays at most $i+ \lceil \log_2 n_i \rceil $ vertices; in the latter case she can win in at most $\ell-2+ \lceil \log_2(n_{\ell-1}+n_\ell +1)\rceil$ moves. It verifies the upper bound in the statement. 
\medskip

To prove the sharpness, we define the following infinite class of all-even subdivided stars. If $\ell\ge 3$ and $p \ge 2$ are two  integers, let  $Z(\ell, p)$ be the all-even subdivided star $S(n_1, \dots n_\ell)$ where $n_\ell=2$, $n_{\ell-1}=2^p-2$, and
$$n_i= \sum_{j=i+1}^{\ell} n_j = 2^{\; \ell+p-i-2} \qquad \mbox{for every $i \in [\ell-2]$.}
$$
Note that $Z(\ell, p)$ contains $n=2^{\ell+p-2}+1$ vertices. As it is an all-even subdivided star, $Z(\ell, p) \in \cS$. By Observation~\ref{obs:S_k}(iv), 
$$\gsmb'(Z(\ell,p)) \geq \lceil \log_2 n \rceil =  \ell+p-1.$$
By determining the upper bound from the current proposition, we find that 
$$i + \lceil \log_2 n_i \rceil = i + (\ell + p - i - 2) = \ell + p - 2$$
for all $i\in [\ell -2]$, and  
$$\ell-2 + \lceil \log_2(n_{\ell-1}+n_\ell +1)\rceil = \ell-2 + p+1 = \ell+p-1\,.$$
Thus the upper bound in the proposition gives $\gsmb'(Z(\ell,p)) \le \ell + p -1$. 
As the upper bound matches the lower bound, the sharpness is true. \end{proof}

\section{Caterpillars}
\label{sec:caterpillars}

A \textit{caterpillar} is a tree $T$ on at least three vertices in which a single path is incident to (or contains) every edge. There is more than one path with this property, but we select the shortest one and call it the \textit{spine} of the caterpillar, see \cite{west_introduction_2000}. The vertices of the spine will be denoted by $v_1,\dots , v_\ell$ according to their natural order. Before stating the main result of the section, some preparation is needed. 

Let $T$ be a caterpillar. Then we set 
$$\po(T) = \{P:\ P\ \mbox{is a maximal path in}\ T\ \mbox{and}\ |V(P)|\ \mbox{is odd}\}\,,$$
where maximal is meant maximal w.r.t.\ inclusion.
We say that a path $P = u_1u_2\dots u_k$ in $T$ is {\em clean} if $\deg_T(u_i) = 2$ for $3\le i\le k-2$. Let further 
$$\pco(T) = \{P:\ P\in \po(T)\ \mbox{and}\ P\ \mbox{is clean}\}\,.$$
We refer to the members of $\pco(T)$ as $\pco$-paths. Note that each such path contains at most two support vertices from the spine of $T$. If $\cP$ is a collection of paths of $T$, then  the smallest carnality of a set of vertices $U$ such that each path from $\cP$ has a vertex in $U$ is denoted by $t_{\rm p}(\cP)$. If $T$ is a caterpillar containing at least one clean path, then let $p(T)$ be the minimum order of such a path. If $t_{\rm p}(\pco(T)) \ge 2$, then let $p^*(T)$ be the smallest integer $s$  such that not all clean paths of order at most $s$ share a vertex, that is, 
$$p^*(T) = \min \{s:\ t_{\rm p}(\{P:\ P\in \pco(T), |V(P)|\le s\}) \ge 2\}\,.$$

\medskip
For the illustration of the above concepts and notation see Fig.~\ref{fig:caterpillar}. For the caterpillar $T$ from the figure we have 
$$\po(T) = \{u_1v_1v_2v_3u_3, u_1v_1v_2v_3u_3', u_3v_3u_3', u_4v_4v_5v_6v_7v_8u_8\}\,.$$ 
Clearly, $p(T) = 3$. Next, $t_{\rm p}(\pco(T)) = 2$ because each path from $\pco(T)$ has a vertex in $\{v_3,v_4\}$ and there is no such set with a single vertex. Finally, $p^*(T) = 7$ because each clean path of order smaller than $7$ contains the vertex $v_3$.

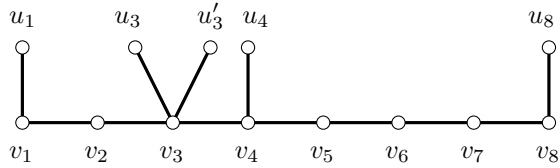
\begin{figure}[ht!]
	\begin{center}
		\begin{tikzpicture}[line cap=round,line join=round,>=triangle 45,x=1.0cm,y=1.0cm]
			\clip(0,1) rectangle (13,4);
			\draw [line width=1.2pt] (3.,2.)-- (4.,2.);
			\draw [line width=1.2pt] (4.,2.)-- (5.,2.);
			\draw [line width=1.2pt] (5.,2.)-- (6.,2.);
			\draw [line width=1.2pt] (6.,2.)-- (7.,2.);
			\draw [line width=1.2pt] (7.,2.)-- (8.,2.);
			\draw [line width=1.2pt] (8.,2.)-- (9.,2.);
			\draw [line width=1.2pt] (9.,2.)-- (10.,2.);
			\draw [line width=1.2pt] (3.,2.)-- (3.,3.);
			\draw [line width=1.2pt] (10.,2.)-- (10.,3.);
			\draw [line width=1.2pt] (5.,2.)-- (4.5,3);
			\draw [line width=1.2pt] (5.,2.)-- (5.5,3);
			\draw [line width=1.2pt] (6.,2.)-- (6.,3.);
			\draw (2.7,3.6) node[anchor=north west] {$u_1$};
			\draw (4.1,3.6) node[anchor=north west] {$u_3$};
			\draw (5.2,3.72) node[anchor=north west] {$u_3'$};
			\draw (5.8,3.6) node[anchor=north west] {$u_4$};
			\draw (9.6,3.6) node[anchor=north west] {$u_8$};
			\draw (2.7,1.8) node[anchor=north west] {$v_1$};
			\draw (3.7,1.8) node[anchor=north west] {$v_2$};
			\draw (4.7,1.8) node[anchor=north west] {$v_3$};
			\draw (5.7,1.8) node[anchor=north west] {$v_4$};
			\draw (6.7,1.8) node[anchor=north west] {$v_5$};
			\draw (7.7,1.8) node[anchor=north west] {$v_6$};
			\draw (8.7,1.8) node[anchor=north west] {$v_7$};
			\draw (9.7,1.8) node[anchor=north west] {$v_8$};
			\begin{scriptsize}
				\draw [fill=white] (3.,2.) circle (2.5pt);
				\draw [fill=white] (4.,2.) circle (2.5pt);
				\draw [fill=white] (5.,2.) circle (2.5pt);
				\draw [fill=white] (6.,2.) circle (2.5pt);
				\draw [fill=white] (7.,2.) circle (2.5pt);
				\draw [fill=white] (8.,2.) circle (2.5pt);
				\draw [fill=white] (9.,2.) circle (2.5pt);
				\draw [fill=white] (10.,2.) circle (2.5pt);
				\draw [fill=white] (3.,3.) circle (2.5pt);
				\draw [fill=white] (10.,3.) circle (2.5pt);
				\draw [fill=white] (4.5,3.) circle (2.5pt);
				\draw [fill=white] (5.5,3) circle (2.5pt);
				\draw [fill=white] (6.,3.) circle (2.5pt);
			\end{scriptsize}
		\end{tikzpicture}
		\caption{Caterpillar $T$.}
		\label{fig:caterpillar}
	\end{center}
\end{figure}

\begin{theorem} \label{thm:caterpillar}
	If $T$ is a caterpillar, then
	\begin{align*}
		\gsmb'(T) &= \begin{cases} 
			\infty; & \enskip \pco(T) = \emptyset, \\
			\lceil \log_2 p(T) \rceil; & \enskip \text{otherwise. }
		\end{cases}	\\
		\gsmb(T) & = \begin{cases} 
			\infty; & \enskip   t_{\rm p}(\pco(T)) \leq 1, \\
			\lceil \log_2 p^*(T) \rceil; & \enskip \text{otherwise. } 
		\end{cases}
	\end{align*}
\end{theorem}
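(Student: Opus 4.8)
The plan is to treat the two invariants through the two reductions available for Maker--Breaker games: for $\gsmb'(T)=\w_M^M(\cH_T)$ I would run everything through the substructure characterization of Theorem~\ref{thm:tree}, while for $\gsmb(T)=\w_M^B(\cH_T)$ I would first apply Proposition~\ref{prop:delete-shrink}(i) to reduce to a Maker--start game after Dominator's optimal opening move. The whole argument rests on one translation principle: in a caterpillar the substructures from $\cS$ are exactly witnessed by the clean odd paths, i.e. the members of $\pco(T)$, and a $\pco$-path of order $m$ lies in $\cS_{\lceil\log_2 m\rceil}$.

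For $\gsmb'$ I would first prove the easy inclusion: a $\pco$-path $P$ of order $m$ is a substructure of $T$. Indeed $P$ is maximal, so its endpoints are leaves of $T$; writing $P=u_1\cdots u_m$ with $m$ odd, the set $X(P)$ consists of the odd-indexed vertices, and cleanness gives $\deg_T(u_i)=2=\deg_P(u_i)$ for the interior indices $3\le i\le m-2$, while the endpoints have degree $1$ in both $T$ and $P$; by Observation~\ref{obs:S_k}(v), $P\in\cS_{\lceil\log_2 m\rceil}$. Taking $P$ of minimum order $p(T)$ and invoking Corollary~\ref{cor:F_k} yields $\gsmb'(T)\le\lceil\log_2 p(T)\rceil$, and finiteness whenever $\pco(T)\neq\emptyset$. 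The reverse inequality, together with the case $\pco(T)=\emptyset$, would follow from the following \emph{Rerouting Lemma}: if a caterpillar $T$ contains any substructure $S\in\cS$, then it contains a $\pco$-path of order at most $|V(S)|$. Granting this, any $S\in\cS_i$ satisfies $|V(S)|\le 2^i-1$ by Observation~\ref{obs:S_k}(iv), hence $p(T)\le 2^i-1$ and $\lceil\log_2 p(T)\rceil\le i$; thus $T$ has no substructure below $\cS_{\lceil\log_2 p(T)\rceil}$, and Theorem~\ref{thm:tree} gives $\gsmb'(T)=\lceil\log_2 p(T)\rceil$, whereas if $\pco(T)=\emptyset$ the contrapositive shows $T$ has no substructure at all, so $\gsmb'(T)=\infty$ (consistent with Theorem~\ref{thm:outcometrees}(ii), $\pco(T)=\emptyset$ being exactly the perfect-matching case). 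I would prove the Rerouting Lemma by induction on $|V(S)|$: if $S$ is already a clean path we are done; if $S$ is a path with a pendant $T$-leaf at an even-indexed interior vertex, rerouting the path into that leaf produces a strictly shorter odd substructure to which induction applies; and at a branch vertex of $S$, which lies in $X(S)$ and is therefore a spine vertex of degree $\ge 3$, one passes to a smaller substructure by selecting two of its limbs.

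For $\gsmb$ I would first use the Helly property of paths in a tree: a subfamily of $\pco(T)$ has a common transversal vertex if and only if its members pairwise intersect, so $t_{\rm p}(\pco(T))\ge 2$ is equivalent to the existence of two vertex-disjoint $\pco$-paths, and after reduction to $R(T)$ to the presence of two strong support vertices, matching Theorem~\ref{thm:outcometrees}(i); this settles the infinite case. For the value $\lceil\log_2 p^*(T)\rceil$ I would give matching bounds. For the upper bound, fix two disjoint $\pco$-paths realizing $p^*(T)$; whatever Dominator opens with, his vertex misses one of them, on which Staller wins within $\lceil\log_2 p^*(T)\rceil$ moves by the path strategy of Theorem~\ref{thm:path}, played at an interior clean (degree-$2$) vertex so that the winning closed neighborhood lies entirely inside the path. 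For the lower bound, the definition of $p^*(T)$ gives a vertex $v^*$ hitting all $\pco$-paths of order at most $p^*(T)-1$; Dominator opens with $v^*$, and by Proposition~\ref{prop:delete-shrink}(i) the game continues as a Maker--start game on $\cH_T-v^*$, in which every remaining clean odd path has order at least $p^*(T)$, so the substructure counting used for $\gsmb'$ forces at least $\lceil\log_2 p^*(T)\rceil$ Staller moves.

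The main obstacle I expect is the Rerouting Lemma and the analogue of it needed in the $\gsmb$ lower bound. The delicate point is that a substructure path need only have degree $2$ at its odd-indexed interior vertices, so it may fail to be clean precisely at even-indexed vertices carrying extra pendant leaves; turning such a not-clean, possibly branched substructure into a genuinely shorter clean odd path while controlling its order is where the caterpillar structure must be exploited carefully. A secondary technical nuisance is that $\cH_T-v^*$ is not itself the closed-neighborhood hypergraph of a graph, so the lower-bound counting for $\gsmb$ has to be carried out directly on the hypergraph via Propositions~\ref{prop:delete} and~\ref{prop:H-comp} rather than by quoting Theorem~\ref{thm:tree} verbatim.
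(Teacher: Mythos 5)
Your treatment of $\gsmb'$ and of the upper bound for $\gsmb$ follows the paper's route (clean odd paths as the substructures witnessing Theorem~\ref{thm:tree}, plus the Helly/common-support-vertex observation), and the ``Rerouting Lemma'' is in substance the argument the paper gives. Two smaller remarks there: your branch-vertex case is handled wrongly --- taking two limbs at a branch vertex $u$ of $S$ does not produce a substructure, because $u$ would lie in $X$ of the resulting path while $\deg_T(u)\ge 3$ --- but the case is in fact vacuous: $u\in X(S)$ forces $\deg_T(u)=\deg_S(u)\ge 3$, so $u$ is adjacent to a $T$-leaf which (degrees being fixed) lies in $S$, making $u$ a support vertex of $S$ and contradicting Observation~\ref{obs:S_k}(ii). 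Also, your route to the $t_{\rm p}(\pco(T))\le 1$ case goes through an unproved equivalence with ``$R(T)$ has at most one strong support vertex''; the paper instead builds an explicit matching around the common support vertex and invokes Lemma~\ref{lem:pairing}, which is both shorter and avoids that lemma.

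The genuine gap is the lower bound $\gsmb(T)\ge\lceil\log_2 p^*(T)\rceil$. You correctly identify that after Dominator's move $v^*$ the game lives on $\cH_T-v^*$, which is not a closed-neighborhood hypergraph, but the sentence ``the substructure counting used for $\gsmb'$ forces at least $\lceil\log_2 p^*\rceil$ moves'' is precisely the step that does not go through as stated. The obvious repair, replacing $\cH_T-v^*$ by $\cH_{T-v^*}$, fails in the needed direction: $E(\cH_T-v^*)\subseteq E(\cH_{T-v^*})$ only yields $\w_M^M(\cH_T-v^*)\ge \w_M^M(\cH_{T-v^*})$, and $\gsmb'(T-v^*)$ can be far too small (a leaf adjacent to $v^*$ becomes an isolated vertex of $T-v^*$, and truncating the spine at $v^*$ can create new short odd clean maximal paths), so no useful bound comes out. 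The paper's proof spends most of its effort exactly here: it chooses $v^*=v_q$ to be a support vertex, and then performs a parity-dependent surgery on $\cH_T-v_q$ --- if the clean maximal path through $v_{q-1},v_q$ is even it \emph{adds} the winning set $N_T[v_{q-1}]\setminus\{v_q\}$, if it is odd it \emph{shrinks} $N_T[v_{q-2}]$ to $N_T[v_{q-2}]\setminus\{v_{q-1}\}$ (and symmetrically on the other side) --- so that the resulting hypergraph $\cH'$ satisfies $\w_M^M(\cH')\le\w_M^M(\cH_T-v_q)$ by Proposition~\ref{prop:delete} and decomposes as $\cH_{T_1}\cup\cH_{T_2}$ for two sub-caterpillars. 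One then still has to check that this surgery creates only \emph{even} clean maximal paths, so that $p(T_1),p(T_2)\ge p^*$ and the already-established $\gsmb'$ formula gives $\min\{\gsmb'(T_1),\gsmb'(T_2)\}\ge\lceil\log_2 p^*\rceil$. None of this is present in, or easily recoverable from, your sketch, so the lower bound for $\gsmb$ remains unproved.
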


\begin{proof} Let $T$ be a caterpillar and set $p = p(T)$ and $p^* = p^*(T)$. 
	
	Suppose first that $\pco(T)=\emptyset$, which also implies that there are no strong support vertices in $T$. We can construct a perfect matching in $T$ as follows. First, delete the support vertices of $T$ from the subgraph induced by the spine of $T$. As $\pco(T)=\emptyset$, each component of the obtained linear forest $T^-$ is an even path. Therefore, $T^-$ admits a perfect matching. For the remaining vertices of $T$, we may match every leaf with its support vertex. As there are no strong support vertices in $T$  we obtain a perfect matching in $T$. Therefore, we may conclude by Theorem~\ref{thm:outcometrees} that $\gsmb'(T)=\gsmb(T)=\infty$. In the rest of the proof, let $\pco(T)\neq \emptyset$.
	
We now prove the statement for $\gsmb'(T)$.
Since $P_1$ is not a caterpillar, $\gsmb'(T)\ge 2$ holds.
Let $k= \lceil \log_2 p \rceil$. 
By Theorem~\ref{thm:tree}, $\gsmb'(T)=2$ if and only if $T$ contains a strong support vertex, that is, $p=3$. Hence, our statement holds for $p=3$. From now on, we may suppose that $p \ge 5$ which means that there is no strong support vertex in $T$.

Assume that $P_p: xv_i\dots v_{i+p-3}y$ is a shortest path in $\pco(T)$.
It follows from Theorem~\ref{thm:tree} and Observation~\ref{obs:S_k}(v) that $P_p \in \cS_k$ is a substructure in $T$, as the vertices $v_i$ and $v_{i+p-3}$, which may have higher degree in $T$ than in $P_p$, do not belong to $X(P_p)$. By Theorem~\ref{thm:tree}, we may infer $\gsmb'(T) \le k$.

Now, suppose for a contradiction that $T$ contains a substructure $S \in \cS_\ell$ so that $\ell \le k-1$.
Observe first that $S$ contains no vertex $u$ with $\deg_S(u) \ge 3$. Indeed, as $T$ is a caterpillar, such a vertex $u$ would be a support vertex in both $T$ and $S$ by Observation~\ref{obs:S_k}(iii). Then, $\deg_S(u) >2$ contradicts Observation~\ref{obs:S_k}(ii). Therefore, $S$ is an odd path. 
We next show that $S$ cannot be a clean path in $T$. Indeed, $S\in \pco(T)$ implies $|V(S)| \ge p$ and $\ell \ge k$ that is a contradiction. Therefore, $S: av_q \dots v_{q+r}b$ is an odd path between the leaves $a$ and $b$ of $T$ and, for an integer $t\in[r-1]$, the vertex $v_{q+t}$ is adjacent to a leaf $c$ in $T$. By choosing the smallest positive $t$ with this property, we may further assume that $S': av_q \dots v_{q+t}c$ is a clean path in $T$. If  $t$ is odd then, by Observation~\ref{obs:S_k}(i), $v_{q+t} $ belongs to $X(S)$ that contradicts $\deg_S(v_{q+t})=2 < \deg_T(v_{q+t})=3$. If $t$ is even, then $S'$ is an odd clean path and hence, $|V(S')|=t+3 \ge n_{\min}(\pco(T))= p$ that implies $|V(S)|\ge p+2$ and contradicts $\ell \le k-1$. This contradiction, together with Theorem~\ref{thm:tree}, imply $\gsmb'(T)\ge k$ and in turn, $\gsmb'(T)= k$ follows.
\medskip

From now on, suppose that a D-game is played on $T$ and $\pco(T)\neq \emptyset$. If $t_{\rm p}(\pco(T)) = 1$, a vertex contained in all $\pco$-paths of $T$ can be chosen as a support vertex $v_i$. (Indeed, a set of intersecting clean paths must have a common endvertex and a common support vertex.) Let $U$ be the set of leaves adjacent to $v_i$. If Dominator plays $v_i$ as his first move, we may construct a matching satisfying the conditions of Lemma~\ref{lem:pairing}, where $X = \{v_i\}$ and $Y = \emptyset$. First, delete the support vertices of $T$ from the subgraph induced by the spine of $T$ to obtain the linear forest $T^-$. Since every $\pco$-path is incident to $v_i$, each component of $T^-$ that contains neither $v_{i-1}$ nor $v_{i+1}$ is an even path and admits a perfect matching. If there is an odd path component which contains $v_{i-1}$, then $v_{i-1}$ is the end vertex, and we can take a matching that covers all vertices of the component except $v_{i-1}$. The situation is similar if an odd path contains $v_{i+1}$. To finish the construction of the matching $M$, we add every edge which is incident to a leaf except those incident to $v_i$ and $U$. By the condition $t_{\rm p}(\pco(T)) = 1$, the caterpillar contains no strong support vertex except (possibly) $v_i$. This shows that $M$ is a matching in $T-v_i$. Concerning the set of vertices covered by $M$, we observe
$$V(T) \setminus V(M) \subseteq N_T[v_i]\,,$$
and conclude by Lemma~\ref{lem:pairing} that Staller cannot win the D-game if Dominator plays $v_i$ as his first move. Thus, $\gsmb(T)=\infty$. 

For the rest of the proof we assume $ t_{\rm p}(\pco(T)) \ge 2$.
By the definition of $p^*$, after Dominator's first move $d_1$, there remains an unplayed $\pco$-path $P_j$ of order $j \le p^*$. Clearly, $P_j \in \cS_i$ with $i \le \lceil \log_2 p^*\rceil$. By Proposition~\ref{prop:delete-shrink}, the game continues as a Maker-start game on $\cH_T-d_1$. As $d_1$ might be adjacent to a support vertex of $P_j$ but no vertex from $X(P_j)$, the hypergraph $\cH_T-d_1$ contains a subhypergraph $\cH'= \cH_{P_j}^{-(V(P_j)\setminus X(P_j))}$. By Proposition~\ref{prop:F_k},  $\cH' \in \cF_i$ implies $\w_M^M(\cH_T-d_1) \le i$. Assuming that $d_1$ is an optimal first move of Dominator, we conclude 
$$\gsmb(T)= \w_M^M(\cH_T-d_1) \le i \le \lceil \log_2 p^* \rceil.$$

We now prove $\gsmb(T) \ge \lceil \log_2 p^* \rceil$ by describing a strategy of Dominator which ensures that Staller cannot win by playing less than $\lceil \log_2 p^* \rceil$ vertices. Let $v_1\dots v_\ell$ be the spine of the caterpillar. 
According to the definition of $p^*$, there exists a support vertex $v_q$ in $T$ such that the smallest order of an odd clean path not covered by $v_q$ is  $p^*$. Let $v_q$ be the first move of Dominator. By Proposition~\ref{prop:delete-shrink}, the game continues as a Maker-start game on $\cH_T-v_q$ and  $\gsmb(T) \ge \w_M^M(\cH_T-v_q)$. We will transform $\cH_T -v_q$ into a hypergraph $\cH'$ such that 
$\w_M^M(\cH') \le \w_M^M(\cH_T-v_q)$.
If $q \ge 2$, then let $P^1$ be the clean maximal path in $T$ which contains both $v_{q-1}$ and $v_q$. 
\begin{itemize}
	\item[$(a)$] 
	If $P^1$ is an even path, we obtain $\cH'$ from $\cH_T-v_q$ by adding the new winning set (i.e., the edge) $N_T[v_{q-1}] \setminus \{v_q\}$. Then, this component of $\cH'$ corresponds to the closed neighborhood hypergraph of the caterpillar $T_1$ that is induced by the vertices $v_1, \dots , v_{q-1}$ and the adjacent leaves in $T$. If $q=2$, then $T_1$ is a star $K_{1,m}$ with $m \ge 1$.
	\item[$(b)$] If $P^1$ is an odd path,  we obtain $\cH'$ from $\cH_T-v_q$ by replacing the winning set $N_T[v_{q-2}]$ with the smaller hyperedge  $N_T[v_{q-2}] \setminus \{v_{q-1}\}$. This component of $\cH'$ therefore corresponds to the closed neighborhood hypergraph of the caterpillar $T_1$ that is induced by the vertices $v_1, \dots , v_{q-2}$ and the adjacent leaves in $T$. If $P^1$ is odd, then $q=2$ is not possible. If $q=3$, then $T_1$ is a star which contains exactly one vertex, namely  $v_{1}$, from the spine of $T$.
\end{itemize}
If $q\le \ell-1$, we also consider a clean maximal path $P^2$ in $T$ that contains $v_q$ and $v_{q+1}$. Similarly to the changes described above for $P^1$, if $|V(P^2)|$ is even, we add the new winning set $N_T[v_{q+1}] \setminus \{v_q\}$; if  $|V(P^2)|$ is odd, we replace  $N_T[v_{q+2}]$ with   $N_T[v_{q+2}] \setminus \{v_{q+1}\}$ when construct $\cH'$. The corresponding component of $\cH'$ is the closed neighborhood hypergraph of a caterpillar $T_2$. In the first case, $T_2$ is the subgraph of $T$ induced by $v_{q+1}, \dots , v_\ell$ and the adjacent leaves, while in the second case $T_2$ is induced by $v_{q+2}, \dots , v_\ell$ and the adjacent leaves. Note that $T_1$ is considered empty if $q=1$, while $T_2$ is empty if $q=\ell$. In these cases we respectively set $\gsmb'(T_1)=\infty$ and $\gsmb'(T_2)=\infty$.

Observe that  $T_1$ and $T_2$ still contain all clean maximal paths of $T$ that do not contain $v_q$. In particular, a $\pco$-path of order $p^*$ is present in the disjoint union $T'=T_1 \cup T_2$. In both cases $(a)$ and $(b)$, only clean maximal paths of even order were added as new ones. Note that, if $|V(P^{i})| \le 5$, no new clean maximal path appears in $T_i$, for $i \in [2]$. Using Propositions~\ref{prop:delete} and \ref{prop:comp}, and the formula already proved for the S-game on a caterpillar, we obtain the following relation:
$$\gsmb(T) \ge \w_M^M(\cH_T-v_q) \ge \w_M^M(\cH')=  \min\{\gsmb'(T_1), \gsmb'(T_2)\} =\lceil \log_2 p^* \rceil.$$
This completes the proof for the second formula.
\end{proof}



\nocite{*}
\bibliographystyle{abbrvnat}
\bibliography{MBD-game-dmtcs}
\label{sec:biblio}

\end{document}